\def\trans{T}
\def\t{\trans}
\def\eps{\varepsilon}
\def\phi{\varphi}
\def\R{\mathbb{R}}
\def\T{\mathcal{T}}
\def\O{\mathcal{O}}
\def\A{\mathbf{A}}
\def\C{\mathbb{C}}
\def\N{\mathbb{N}}
\def\h{\gamma}
\def\L{\mathop{\mathcal{L}}\nolimits}
\def\x{\mathbin{\times}}
\def\o{\mathbin{\raise1pt\hbox{$\scriptscriptstyle\mathord\otimes$}}}
\def\had{\mathbin{\raise1pt\hbox{$\scriptstyle\mathord\odot$}}}
\def\conv{\mathbin{\star}}
\def\diag{\mathop{\mathrm{diag}}\nolimits}
\def\eqdef{\mathrel{\stackrel{\mathrm{def}}{=}}}
\newtheorem{remark}{Remark}
\newtheorem{definition}{Definition}
\newtheorem{theorem}{Theorem}[section]
\newtheorem{lemma}[theorem]{Lemma}
\newproof{proof}{Proof}
\begin{document}
\begin{frontmatter}
\title{Superfast solution of linear convolutional Volterra equations using QTT approximation}
\author[uoc]{Jason A. Roberts}                     \ead{j.roberts@chester.ac.uk}
\author[inm,uoc]{Dmitry V. Savostyanov\fnref{lhm}} \ead{dmitry.savostyanov@gmail.com}
\author[inm]{Eugene E. Tyrtyshnikov\fnref{lhm}}    \ead{eugene.tyrtyshnikov@gmail.com}

\address[uoc]{University of Chester, Parkgate Road, Chester, CH1 4BJ, UK}
\address[inm]{Institute of Numerical Mathematics, Russian Academy of Sciences, Gubkina 8, Moscow, 119333, Russia}

\fntext[lhm]{During this work D. V. Savostyanov and E. E. Tyrtyshnikov were supported by the Leverhulme Trust to visit, stay and work at the University of Chester, as the Visiting Research Fellow and the Visiting Professor, respectively.
Their work was also supported in part by RFBR grant 11-01-00549 and Russian Federation Government Contracts $\Pi$1112, 14.740.11.0345, 14.740.11.1067}
\date{November 22, 2012}

\begin{abstract}
We address a linear fractional differential equation and develop  effective solution methods using algorithms for inversion of triangular Toeplitz matrices and the recently proposed QTT format.
The inverses of such matrices can be computed by the divide and conquer and modified Bini's algorithms, for which we present the versions with the QTT approximation.
We also present an efficient formula for the shift of vectors given in QTT format, which is used in the divide and conquer algorithm.
As the result, we reduce the complexity of inversion from the fast Fourier level $\O(n\log n)$ to the speed of superfast Fourier transform, i.e., $\O(\log^2 n).$ 
The results of the paper are illustrated by numerical examples. 
\end{abstract}
\begin{keyword}
fractional calculus \sep Caputo derivative \sep triangular Toeplitz matrix \sep divide and conquer   \sep tensor train format \sep QTT \sep fast convolution \sep superfast Fourier transform 
\MSC[2010] 15A69 \sep 26A33 \sep 45E10 \sep 65F05
\end{keyword}
\end{frontmatter}

\section{Introduction}
Equations involving derivatives of fractional order are of great importance, due to their role in mathematical models applied in mechanics, biochemistry, electrical engineering, medicine, etc., see~\cite{diethelm-1997,CapMain1,FreedDiet1}.
In this paper we present a superfast algorithm for the numerial solution of the linear equation 
\begin{equation}\label{eq1}
D^{\alpha}_*y(t) = F(t,y(t)) = my(t)+f(t), \quad 0\leq t \leq T, \quad y(0)=y_0, 
\end{equation}
where $0<\alpha<1$ is the order of the fractional operator, $m\in\R$ is a constant referred to as \emph{mass}, and $f(t)$ is a sufficiently well--behaved \emph{forcing} term.
For $\alpha=1/2$ this equation is a scalar version of the Bagley-Torvik equation~\cite{BT1}, which is used in the modelling of viscoelastic materials. 
The definitions of Caputo derivative $D^\alpha_*$ can be found in many sources, e.g.~\cite{Diet1,Pod1}, and are presented in appendix for the convenience.

The classical result of Diethelm~\cite[Lem. 6.2]{Diet1} allows us to rewrite~\eqref{eq1} in the form
\begin{equation}\label{eq2}
 y(t) = y_0 + \frac{1}{\Gamma(\alpha)} \int_0^t  (t-s)^{\alpha-1} \left( my(s)+f(s) \right) ds,
\end{equation}
where $\Gamma(\alpha) = \int_0^\infty e^{-t}t^{\alpha-1} dt$ is the gamma function.
Eq.~\eqref{eq2} is the weakly singular convolutional Volterra equation of the second kind with the Abel--type kernel. 
Volterra equations of second kind are well-studied and are proven to have a unique continuous solution for $0\leq t \leq T,$ see, e.g.~\cite[Thm. 3.2]{linz-1985}.
The solution is asymptotically stable if $m<0$ (see~\cite{GorRut1}) which we will always assume in this paper.

For certain forcing terms, the solution of~\eqref{eq2} can be found using series methods.
In a general framework,  we can discretize~\eqref{eq2} using a collocation or Galerkin method and numerically solve the resulted linear system.
This matrix approach to fractional calculus was brilliantly presented by I. Podlubny in~\cite{podlubny-matr-2000}.
In this paper we consider the collocation method and assume that $y(t)$ is approximated by a piecewise--linear function on a uniform grid $t_j=jh,$ $j=0,\ldots,n,$ where $h=T/n.$
The stability of collocation methods for fractional equations was studied in~\cite{blanck-1995,blanck-1996} and an error analysis can be found in~\cite{DietFordFreed1}.
The discretized equation is the following
$$
y_j = y_0 + \frac{h^\alpha}{\Gamma(\alpha)} \sum_{k=0}^j w_{j,k} (m y_k + f_k), \qquad j=1,\ldots,n,
$$
where $y_j=y(t_j),$ $f_k=f(t_k)$ and $w_{j,k}$ are quadrature weights, defined by integration of piecewise--linear basis functions with Abel-type kernel, i.e.,
\begin{equation}\nonumber
w_{j,k} = \frac{1}{\alpha (\alpha+1)} 
  \left\{ \begin{array}{lc}
    (j-1)^{\alpha+1} - (j-\alpha-1)j^\alpha,                     & k=0, \\
    (j-k-1)^{\alpha+1} - 2 (j-k)^{\alpha+1} + (j-k+1)^{\alpha+1}, & 1 \leq k < j, \\
    1, & k=j.
  \end{array} \right.
\end{equation}
Finally, we obtain the linear system $A y = b$ with triangular Toeplitz matrix and the right-hand side defined as follows,
\begin{equation}\label{eq3}
 \sum_{k=1}^j a_{j-k} y_k = b_j, \qquad j=1,\ldots,n,
\end{equation}
\begin{equation}\nonumber
 \begin{split}
 a_p & = 
  \left\{ \begin{array}{lc}
    1 - \h m, & p=0, \\
      - \h m \left( (p-1)^{\alpha+1} -2p^{\alpha+1} + (p+1)^{\alpha+1}  \right),& p>0,
  \end{array} \right.
  \\
 b_j & = y_0 + \h \left( \sum_{k=1}^j w_{j,k} f_k + w_{j,0} (m y_0 + f_0) \right),
 \end{split}
\end{equation}
where $\h=h^\alpha / \Gamma(\alpha+2).$

The numerical scheme we use is analogous to the fractional Adams method proposed in~\cite{DietFordFreed1}  for a general (e.g. nonlinear) function $F(t,y(t)).$
The method is developed as a generalization of the Adams--Bashforth--Moulton scheme from the classical numerical analysis of ordinary differential equations and a detailed error analysis is provided.
The complexity of the fractional Adams method in the nonlinear case is $\O(n^2).$
To reduce this complexity, we can take into account the decay speed of the Abel kernel~$k(s)=s^{\alpha-1}$ of the integral in~\eqref{eq2}.
The so-called~\emph{fixed memory principle}~\cite{Pod1,Pod2} and more accurate~\emph{nested mesh method}~\cite{FordSimp1,DietFordFreedLuchko1} are based on truncation and approximation of the tail of the integral~\eqref{eq2}, respectively, and have almost linear complexity w.r.t. $n.$
We revise these methods in Sec.~\ref{LOG}.

For linear $F(t,y(t)),$ the problem writes as the linear system~\eqref{eq3}, which can be solved using well--developed algorithms for the inversion of triangular Toeplitz matrices, or triangular strip matrices, as they are referred in~\cite{podlubny-matr-2000}. 
These methods are recalled in Sec.~\ref{TTOEP}, and have the asymptotic complexity of the fast Fourier transform (FFT) algorithm, which is $\O(n \log n).$

Recently, a superfast Fourier transform algorithm was proposed in~\cite{dks-ttfft-2012}, based on the approximation of vectors in the \emph{quantized tensor train} (QTT) format~\cite{osel-tt-2011,khor-qtt-2011}.
The method can be considered as a classical model of quantum superfast Fourier transform algorithm~\cite{EkertJozsa-1998}, and has a square-logarithmical complexity~$\O(\log^2 n)$ for a certain class of vectors, for which such a model is efficient.
This class of vectors is partially established in~\cite{sav-rank1-2012} and include, for example, vectors with sparse Fourier image.
The numerical experiments provided in Sec.~\ref{TTINV} show that the Abel kernel $t(s) = s^{1-\alpha}$ is efficiently approximated by the QTT format for all $0<\alpha <1$ with accuracy up to the machine threshold.
Based on this observation, we propose the superfast inversion algorithm for the triangular Toeplitz matrix~\eqref{eq3}, using the QTT approximation.

The numerical experiments provided in Sec.~\ref{NUM} justify the accuracy and sublinear complexity of the method proposed.

\section{Numerical method with logarithmic memory} \label{LOG}
In \cite{Pod1} and \cite{Pod2} the author describes an approach to the numerical integration involved in solving a fractional problem whereby the first part (or tail) of the integral is ignored (i.e. assuming the value of the integral over this region is negligible) and so the memory of the system is truncated at some point. The error introduced via this process is described in \cite{Pod1} for Riemann-Liouville fractional derivatives. In \cite{FordSimp1} the authors consider the error that is introduced when this approach is applied to problems expressed with respect to the Caputo fractional derivative. The authors show that by introducing a finite memory of fixed length $T$ for the Caputo derivative we introduce an error of the form
\begin{equation} \label{fixerror}
E=\left| \frac{1}{\Gamma\left( 1-\alpha\right)}\int^{t-T}_0\frac{y'(s)}{\left( t-s\right)^\alpha}ds\right|.
\end{equation}
Letting $\sup_{s\in[0,t]}|y'(s)|=M$ then
B
\begin{equation}
\label{fe2}
E\leq\frac{M\left( t^{1-\alpha}-T^{1-\alpha}\right)}{\Gamma\left( 2-\alpha\right)}.
\end{equation}
So for a fixed memory $T<t$ we have a loss of order such that the error does not tend towards zero as the stepsize approaches zero. Indeed, the authors in \cite{FordSimp1} highlight that in order to preserve the order of the method we would need to choose $T$ so that (for a fixed error bound $E$) we have
\begin{equation}
\label{fe3}
T^{1-\alpha}\geq t^{1-\alpha}-\left(\frac{E\Gamma(2-\alpha)}{M}\right),
\end{equation}
which introduces a computational cost --- precisely what the fixed memory principle is trying to avoid.
To overcome this it is proposed in \cite{FordSimp1}, and described further in \cite{DietFordFreedLuchko1}, that the fixed memory principle is amended so that the region of integration $[0,t]$ is decomposed into a sequence of finite-length intervals with differing stepsizes. 
So as we move `backwards' along the interval from $t$ to $0$ the subintervals use coarser and coarser stepsizes, except possibly for some small sub-interval near zero due to the length of this subinterval not being an exact multiple of the current stepsize --- in such circumstances the authors suggest a couple of alternative approaches for this subinterval, one such alternative being the use of the original stepsize.
Such a \emph{nested mesh} approach (see actual mesh on Fig.~\ref{fig:nest}) is possible due to the scaling properties of the fractional integral, which are discussed in \cite{DietFordFreedLuchko1} and \cite{FordSimp1}. Thus the weights (of the Adams--type method described earlier) for calculating $\Omega^\alpha_hf(nh)\approx I^\alpha f(nh)$ with a stepsize $h$ can be used to calculate $\Omega^\alpha_{\omega^ph}f\left(\omega^pnh\right)\approx I^\alpha f\left(n\omega^ph\right)$ using a stepsize of $\omega^ph$.
The authors \cite{FordSimp1} define, for $h\in\R^+$, the mesh $M_h$ by $M_h=\left\{ hn,n\in\N\right\}$. If $\omega ,r,p\in\N$, $\omega>0$, $r>p$, then $M_{\omega^rh}\subset M_{\omega^ph}$. The authors then decompose the interval $[0,t]$, for fixed $T>0$ in the following way:
\begin{equation}
\label{mesh1}
[0,t]=[0,t-\omega ^mT]\cup [t-\omega ^mT, t-\omega^{m-1}T]\cup\cdots\cup [t-\omega T, t-T]\cup [t-T,t]
\end{equation}
where $m\in\N$ is the smallest integer such that $t<\omega^{m+1}T$. A step length of $h$ is used over the most recent time interval $[t-T,t]$ with successively larger step sizes over earlier intervals, as follows.
Let $t,T,h\in\R$, $\omega^{m+1}T>t\geq \omega^mT$, $t>1$, $h>0$ with $t=nh$ for some $n\in\N$. The integral can be rewritten as
\begin{eqnarray}
\label{mesh2}
i^\alpha_{[0,t]}f(t)&=&I^\alpha_{[t-T,t]}f(t)+\sum^{m-1}_{i=0}I^\alpha_{\left[ t-\omega^{i+1}T,t-\omega^iT\right]}f(t)+I^\alpha_{[0,t-\omega^mT]}f(t)\\
&=&I^\alpha_{[t-T,t]}f(t)+\sum^{m-1}_{i=0}\omega^{i\alpha}I^\alpha_{[t-\omega T,t-T]}f\left(\omega^it\right)+\omega^{m\alpha}I^\alpha_{[0,t-\omega^mT]}f\left(\omega^mt\right) ,
\end{eqnarray}
where $I^\alpha_{[t-a,t-b]}f(t)=\frac{1}{\Gamma(\alpha)}\int^{t-b}_{t-a}\frac{f(s)}{(t-s)^{1-\alpha}}ds$.
The authors also show the following:
\begin{theorem}\cite{FordSimp1}
The nested mesh scheme preserves the order of the underlying rule on which it is based.
\end{theorem}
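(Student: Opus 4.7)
The plan is to decompose the quadrature error into contributions from the $m+2$ subintervals of~\eqref{mesh1} and bound each separately. Let $p$ denote the order of the underlying rule $\Omega^\alpha_h$, so that on a fixed reference interval of length $(\omega-1)T$ with stepsize $h$ and sufficiently smooth integrand $g$ one has $|I^\alpha g - \Omega^\alpha_h g| \leq C h^p \|g^{(k)}\|_\infty$ for some integer $k$ depending on $p$. On the most recent subinterval $[t-T,t]$ the nested scheme is exactly the original rule with stepsize $h$, so its error is $O(h^p)$ by assumption.

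For each intermediate subinterval $[t-\omega^{i+1}T,\,t-\omega^i T]$ the key device is the scaling identity already displayed in~\eqref{mesh2}: both the exact integral and its quadrature equal $\omega^{i\alpha}$ times the corresponding quantities on the reference interval $[t-\omega T,t-T]$, with the original stepsize $h$ restored and the integrand replaced by the rescaled function $g_i(s)=f\bigl(t-\omega^i(t-s)\bigr)$. Applying the order-$p$ bound to $g_i$ then gives a contribution of size $\omega^{i\alpha} \cdot C h^p \|g_i^{(k)}\|_\infty$. Differentiating $g_i$ $k$ times in $s$ pulls out a factor $\omega^{ik}$ from the chain rule, so the $i$-th intermediate subinterval contributes at most $C \omega^{i(\alpha+k)} h^p \|f^{(k)}\|_\infty$. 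The leftmost piece $[0,t-\omega^m T]$ is treated analogously; here the Abel kernel is uniformly bounded since $t-s \geq \omega^m T$, and either alternative suggested in~\cite{FordSimp1} for the residual non-multiple of the stepsize yields the same form of bound.

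Summation is then immediate. Because $m$ is the smallest integer with $t<\omega^{m+1}T$, we have $\omega^m \leq t/T$, so the geometric series $\sum_{i=0}^{m}\omega^{i(\alpha+k)}$ is bounded by a constant depending only on $t/T$, $\omega$, $\alpha$ and $p$. Hence the total error is $O(h^p)$ as $h\to 0$ with $t$, $T$ fixed, which is exactly the statement that the global order is preserved.

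The main obstacle is the derivative amplification hidden in the scaling: each change of variable that brings a subinterval back to the reference interval produces derivatives of $g_i$ that are $\omega^{ik}$ times larger than those of $f$, and these are further multiplied by the prefactor $\omega^{i\alpha}$. The proof closes only because the number of subintervals $m$ is logarithmic in $t/T$ and independent of $h$, so the geometric sum collapses to an $h$-independent constant rather than growing with the mesh refinement. A careful treatment also needs to verify that the error constant $C$ of the underlying kernel-dependent rule does not itself degrade under the rescaling, and — for a truly robust statement in $\alpha$ — that $C$ stays bounded as $\alpha$ ranges over $(0,1)$.
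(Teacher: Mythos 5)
The paper does not prove this statement at all: it is quoted verbatim from \cite{FordSimp1} and used as a black box, so there is no internal proof to compare against. Your reconstruction is essentially the argument of the cited reference: split the error over the $m+2$ subintervals of~\eqref{mesh1}, use the scaling identity behind~\eqref{mesh2} to map each coarse subinterval back to the reference interval $[t-\omega T,t-T]$ with the fine stepsize $h$, track the prefactor $\omega^{i\alpha}$ and the derivative amplification $\omega^{ik}$ from the change of variables, and observe that the resulting geometric sum is controlled by $\omega^{m(\alpha+k)}\leq (t/T)^{\alpha+k}$, a constant independent of $h$ since $m$ depends only on $t/T$ and $\omega$. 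That is the correct mechanism, and you correctly identify the one place where the argument could fail (unbounded growth of the per-subinterval constants) and why it does not.

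One caveat worth making explicit rather than leaving implicit in ``sufficiently smooth integrand'': for the equations actually treated in this paper the integrand is $y'(s)$ (or $my(s)+f(s)$ under the Volterra form~\eqref{eq2}), and solutions of Caputo problems generically behave like $y_0+ct^\alpha$ near $s=0$, so $\|f^{(k)}\|_\infty$ over $[0,t]$ is infinite for $k\geq 1$. Your bound on the leftmost subinterval $[0,t-\omega^m T]$ therefore needs either the hypothesis that the exact solution is smooth up to $s=0$, or the weighted/graded error analysis that \cite{DietFordFreed1} carries out for the underlying Adams rule; the boundedness of the Abel kernel on that subinterval does not by itself rescue the derivative norms. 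With that hypothesis stated, the proof is complete.
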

In addition, whilst the computational cost of the full-memory approach is of order $O(N^2)$, the nested-mesh approach has order $O(N\log N)$.
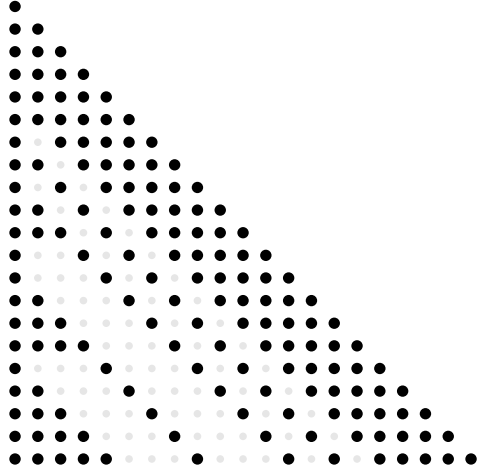
\begin{figure}[t]
 \begin{center}
    \def\Rb{1mm} \tikzstyle{b}=[circle,fill=black   ,inner sep=0mm,minimum size=1.5mm]
  \def\Rw{1mm} \tikzstyle{w}=[circle,fill=black!10,inner sep=0mm,minimum size=1.0mm]
  \begin{tikzpicture}[x=3mm,y=-3mm]
    \node[b] at (0,0) {};

    \node[b] at (0,1) {};
    \node[b] at (1,1) {};

    \node[b] at (0,2) {};
    \node[b] at (1,2) {};
    \node[b] at (2,2) {};
    
    \node[b] at (0,3) {};
    \node[b] at (1,3) {};
    \node[b] at (2,3) {};
    \node[b] at (3,3) {};
    
    \node[b] at (0,4) {};
    \node[b] at (1,4) {};
    \node[b] at (2,4) {};
    \node[b] at (3,4) {};
    \node[b] at (4,4) {};
    
    \node[b] at (0,5) {};
    \node[b] at (1,5) {};
    \node[b] at (2,5) {};
    \node[b] at (3,5) {};
    \node[b] at (4,5) {};
    \node[b] at (5,5) {};
    
    \node[b] at (0,6) {};
    \node[w] at (1,6) {};
    \node[b] at (2,6) {};
    \node[b] at (3,6) {};
    \node[b] at (4,6) {};
    \node[b] at (5,6) {};
    \node[b] at (6,6) {};
    
    \node[b] at (0,7) {};
    \node[b] at (1,7) {};
    \node[w] at (2,7) {};
    \node[b] at (3,7) {};
    \node[b] at (4,7) {};
    \node[b] at (5,7) {};
    \node[b] at (6,7) {};
    \node[b] at (7,7) {};
    
    \node[b] at (0,8) {};
    \node[w] at (1,8) {};
    \node[b] at (2,8) {};
    \node[w] at (3,8) {};
    \node[b] at (4,8) {};
    \node[b] at (5,8) {};
    \node[b] at (6,8) {};
    \node[b] at (7,8) {};
    \node[b] at (8,8) {};
    
    \node[b] at (0,9) {};
    \node[b] at (1,9) {};
    \node[w] at (2,9) {};
    \node[b] at (3,9) {};
    \node[w] at (4,9) {};
    \node[b] at (5,9) {};
    \node[b] at (6,9) {};
    \node[b] at (7,9) {};
    \node[b] at (8,9) {};
    \node[b] at (9,9) {};
    
    \node[b] at ( 0,10) {};
    \node[b] at ( 1,10) {};
    \node[b] at ( 2,10) {};
    \node[w] at ( 3,10) {};
    \node[b] at ( 4,10) {};
    \node[w] at ( 5,10) {};
    \node[b] at ( 6,10) {};
    \node[b] at ( 7,10) {};
    \node[b] at ( 8,10) {};
    \node[b] at ( 9,10) {};
    \node[b] at (10,10) {};
    
    \node[b] at ( 0,11) {};
    \node[w] at ( 1,11) {};
    \node[w] at ( 2,11) {};
    \node[b] at ( 3,11) {};
    \node[w] at ( 4,11) {};
    \node[b] at ( 5,11) {};
    \node[w] at ( 6,11) {};
    \node[b] at ( 7,11) {};
    \node[b] at ( 8,11) {};
    \node[b] at ( 9,11) {};
    \node[b] at (10,11) {};
    \node[b] at (11,11) {};
    
    \node[b] at ( 0,12) {};
    \node[w] at ( 1,12) {};
    \node[w] at ( 2,12) {};
    \node[w] at ( 3,12) {};
    \node[b] at ( 4,12) {};
    \node[w] at ( 5,12) {};
    \node[b] at ( 6,12) {};
    \node[w] at ( 7,12) {};
    \node[b] at ( 8,12) {};
    \node[b] at ( 9,12) {};
    \node[b] at (10,12) {};
    \node[b] at (11,12) {};
    \node[b] at (12,12) {};
    
    \node[b] at ( 0,13) {};
    \node[b] at ( 1,13) {};
    \node[w] at ( 2,13) {};
    \node[w] at ( 3,13) {};
    \node[w] at ( 4,13) {};
    \node[b] at ( 5,13) {};
    \node[w] at ( 6,13) {};
    \node[b] at ( 7,13) {};
    \node[w] at ( 8,13) {};
    \node[b] at ( 9,13) {};
    \node[b] at (10,13) {};
    \node[b] at (11,13) {};
    \node[b] at (12,13) {};
    \node[b] at (13,13) {};
    
    \node[b] at ( 0,14) {};
    \node[b] at ( 1,14) {};
    \node[b] at ( 2,14) {};
    \node[w] at ( 3,14) {};
    \node[w] at ( 4,14) {};
    \node[w] at ( 5,14) {};
    \node[b] at ( 6,14) {};
    \node[w] at ( 7,14) {};
    \node[b] at ( 8,14) {};
    \node[w] at ( 9,14) {};
    \node[b] at (10,14) {};
    \node[b] at (11,14) {};
    \node[b] at (12,14) {};
    \node[b] at (13,14) {};
    \node[b] at (14,14) {};
    
    \node[b] at ( 0,15) {};
    \node[b] at ( 1,15) {};
    \node[b] at ( 2,15) {};
    \node[b] at ( 3,15) {};
    \node[w] at ( 4,15) {};
    \node[w] at ( 5,15) {};
    \node[w] at ( 6,15) {};
    \node[b] at ( 7,15) {};
    \node[w] at ( 8,15) {};
    \node[b] at ( 9,15) {};
    \node[w] at (10,15) {};
    \node[b] at (11,15) {};
    \node[b] at (12,15) {};
    \node[b] at (13,15) {};
    \node[b] at (14,15) {};
    \node[b] at (15,15) {};
    
    \node[b] at ( 0,16) {};
    \node[w] at ( 1,16) {};
    \node[w] at ( 2,16) {};
    \node[w] at ( 3,16) {};
    \node[b] at ( 4,16) {};
    \node[w] at ( 5,16) {};
    \node[w] at ( 6,16) {};
    \node[w] at ( 7,16) {};
    \node[b] at ( 8,16) {};
    \node[w] at ( 9,16) {};
    \node[b] at (10,16) {};
    \node[w] at (11,16) {};
    \node[b] at (12,16) {};
    \node[b] at (13,16) {};
    \node[b] at (14,16) {};
    \node[b] at (15,16) {};
    \node[b] at (16,16) {};
    
    \node[b] at ( 0,17) {};
    \node[b] at ( 1,17) {};
    \node[w] at ( 2,17) {};
    \node[w] at ( 3,17) {};
    \node[w] at ( 4,17) {};
    \node[b] at ( 5,17) {};
    \node[w] at ( 6,17) {};
    \node[w] at ( 7,17) {};
    \node[w] at ( 8,17) {};
    \node[b] at ( 9,17) {};
    \node[w] at (10,17) {};
    \node[b] at (11,17) {};
    \node[w] at (12,17) {};
    \node[b] at (13,17) {};
    \node[b] at (14,17) {};
    \node[b] at (15,17) {};
    \node[b] at (16,17) {};
    \node[b] at (17,17) {};
    
    \node[b] at ( 0,18) {};
    \node[b] at ( 1,18) {};
    \node[b] at ( 2,18) {};
    \node[w] at ( 3,18) {};
    \node[w] at ( 4,18) {};
    \node[w] at ( 5,18) {};
    \node[b] at ( 6,18) {};
    \node[w] at ( 7,18) {};
    \node[w] at ( 8,18) {};
    \node[w] at ( 9,18) {};
    \node[b] at (10,18) {};
    \node[w] at (11,18) {};
    \node[b] at (12,18) {};
    \node[w] at (13,18) {};
    \node[b] at (14,18) {};
    \node[b] at (15,18) {};
    \node[b] at (16,18) {};
    \node[b] at (17,18) {};
    \node[b] at (18,18) {};
    
    \node[b] at ( 0,19) {};
    \node[b] at ( 1,19) {};
    \node[b] at ( 2,19) {};
    \node[b] at ( 3,19) {};
    \node[w] at ( 4,19) {};
    \node[w] at ( 5,19) {};
    \node[w] at ( 6,19) {};
    \node[b] at ( 7,19) {};
    \node[w] at ( 8,19) {};
    \node[w] at ( 9,19) {};
    \node[w] at (10,19) {};
    \node[b] at (11,19) {};
    \node[w] at (12,19) {};
    \node[b] at (13,19) {};
    \node[w] at (14,19) {};
    \node[b] at (15,19) {};
    \node[b] at (16,19) {};
    \node[b] at (17,19) {};
    \node[b] at (18,19) {};
    \node[b] at (19,19) {};
    
    \node[b] at ( 0,20) {};
    \node[b] at ( 1,20) {};
    \node[b] at ( 2,20) {};
    \node[b] at ( 3,20) {};
    \node[b] at ( 4,20) {};
    \node[w] at ( 5,20) {};
    \node[w] at ( 6,20) {};
    \node[w] at ( 7,20) {};
    \node[b] at ( 8,20) {};
    \node[w] at ( 9,20) {};
    \node[w] at (10,20) {};
    \node[w] at (11,20) {};
    \node[b] at (12,20) {};
    \node[w] at (13,20) {};
    \node[b] at (14,20) {};
    \node[w] at (15,20) {};
    \node[b] at (16,20) {};
    \node[b] at (17,20) {};
    \node[b] at (18,20) {};
    \node[b] at (19,20) {};
    \node[b] at (20,20) {};
  \end{tikzpicture}
 
 \end{center}
 \caption{Example of the grids used at subsequent steps of the nested mesh method (from top to bottom). On each line the active points of the grid are shown by black and non-active by grey dots.}
 \label{fig:nest}
\end{figure}

\section{Inversion of triangular Toeplitz matrices} \label{TTOEP}
\subsection{Basic properties of triangular Toeplitz matrices}
Let $\T_n$ be a set of lower triangular Toeplitz $n\times n$ matrices%
\footnote{Here and further we write matrix and vector indices in round brackets instead of putting them as subscripts, in order to introduce the convenient notation for QTT representation later.}%
, i.e.,
$$
A \in\T_n \quad \Leftrightarrow\quad A = \left[a(j,k)\right]_{j,k=0}^{n-1},\quad a(j,k) = a(j-k), \quad  a(p)=0, \quad  p < 0.
$$

It is easy to check the following properties of $\T_n.$
\begin{enumerate}
 \item $A \in\T_n, \: B \in\T_n \:\Rightarrow\: AB \in\T_n;$
 \item $A \in\T_n, \: B \in\T_n \:\Rightarrow\: AB = BA;$
 \item $A \in\T_n, \: a_0\neq0 \:\Rightarrow\:  A^{-1} \in\T_n.$
\end{enumerate}
By the last property, the inverse matrix $B=A^{-1},$ as well as all matrices from $\T_n,$ is defined by its first column.
The standard solution method for triangular linear systems has complexity $\O(n^2)$ and yields the following trivial formula 
\begin{equation}\label{eq:inv}%
 b(0)=\frac{1}{a(0)}, \qquad b(j) = -\frac{1}{a(0)} \sum_{k=1}^j b(j-k) a(k), \quad j=1,\ldots,n-1.
\end{equation}

For $A, B \in\T_n,$ the product  $X=AB \in\T_n$ and is also defined by the first column $x=A b.$
Therefore, matrix-by-matrix multiplication in $\T_n$ is equivalent to the multiplication of a vector by the Toeplitz matrix, i.e., discrete \emph{convolution} $x(j) = \sum_{k=0}^j a(j-k) b(k).$ 
A naive computation by this formula requires $\O(n^2)$ operations, but it is well-known that it can be computed in $\O(n \log n)$ operations using the fast Fourier transform (FFT) algorithm~\cite{gauss-fft,cooleytukey-fft}.
To recall this, we note that each $n \times n$ Toeplitz matrix $T$ is the leading submatrix of some $2n \times 2n$ circulant matrix
\begin{equation}\nonumber
C = \begin{bmatrix} T & * \\ * & T \end{bmatrix}, \qquad C = \left[c(j,k)\right], \quad\mbox{where}\quad c(j,k)=c(j-k \mod 2n),
\end{equation}
and all circulant matrices are diagonalized by unitary Fourier matrix as follows (see cf.~\cite{golub-1996})
$$
C =  F^* \Lambda F, \qquad \Lambda = \sqrt{2n} \diag(F c).
$$
Therefore, multiplication by $C$ and hence by $T$ can be performed by 3 FFTs of size $n$ with complexity $\O(n \log n).$

The inversion of triangular Toeplitz matrices has asymptotically the same complexity, i.e., $c M(n),$ where $M(n)$ denotes the complexity of matrix multiplication.
The modern highly-improved inversion algorithms reduce the constant to the level from $c=1.4$ to $c=1.5,$ see, e.g.~\cite{ttoep-inv-murphy}.
We now recall the classical algorithms, which have slightly larger constant $c,$ but are much more simple and easy to follow.
In Sec.~\ref{TTINV} we will adjust the classical inversion algorithms to use the compressed format for the approximate representation of matrix, reducing the complexity to \emph{sublinear} w.r.t. $n.$

\subsection{Divide and conquer method}
To benefit from the Toeplitz structure and reach $\O(n \log n)$ complexity for the inversion algorithm, we can use the divide-and-conquer strategy. 
This was noted in~\cite{morf-dc-1980} and developed in~\cite{commenges-dc-1984}.
It is easy to check that if $2n \times 2n$ lower triangular Toeplitz matrix $A' \in \T_{2n}$  is partitioned to $n \times n$ matrices, the inverse matrix writes as follows
\begin{equation}\label{eq:dc}
  A' = \begin{bmatrix} A &  \\ C & A \end{bmatrix}, \qquad (A')^{-1} = \begin{bmatrix} A^{-1} &  \\ -A^{-1}CA^{-1} & A^{-1} \end{bmatrix} = 
 \begin{bmatrix} A^{-1} &  \\ & A^{-1} \end{bmatrix} \: \begin{bmatrix} I &  \\ -CA^{-1} & I \end{bmatrix},
\end{equation}
where $A\in\T_n,$ $A^{-1}\in\T_n$ and $C$ is a Toeplitz matrix.
If $n=2^d$ and $A_d\in\T_{2^d},$ this formula yields the reccurent method to compute $A_d^{-1}.$ 
We start from some small $d_0$ and use~\eqref{eq:inv} to compute the inverse of $2^{d_0}\times 2^{d_0}$ leading submatrix $A_{d_0}\in\T_{2^{d_0}}.$
Then we subsequently apply~\eqref{eq:dc} and compute $A^{-1}_d$ in $(d-d_0)$ steps. 
Each step requires to compute the first column of $A^{-1}_t C_t A^{-1}_t$ with $2^t \times 2^t$ Toeplitz matrices $A^{-1}_t$ and $C_t,$ where $t=d_0+1,\ldots,d.$
Each multiplication is done in $\O(t 2^t)$ operations, which summarizes to $\O(d 2^d) = \O(n \log n)$ overall complexity.
More accurately, the cost of the divide and conquer algorithm is smaller than $12$ FFTs of size $n.$

\subsection{Bini's and related approximate methods}
In order to reduce the number of FFTs used in computations and obtain algorithm with better parallel performance, the approximate method to compute $A^{-1}$ for $A\in\T_n$ was proposed in~\cite{bini-1984}.
It is noted that $\T_n$ is the algebra generated by the matrix $H\in\T_n$ with unit elements on the subdiagonal and zeros elsewhere, i.e., transposed Jordan block with zero diagonal.
Therefore, $A\in\T_n$ with first column $a=[a(j)]_{j=0}^{n-1}$ is written as $A = \sum_{j=0}^{n-1} a(j) H^j.$
The idea is to add a small element $\eps^n$ at the top right corner of the matrix and substitute $H$ by $H_\eps = H + \eps^n e_0^\t e_{n-1}.$
It is easy to check that $D_\eps H_\eps D_\eps^{-1} = \eps C, $
where $D_\eps=\diag\{\eps^j\}_{j=0}^{n-1},$ and $C = H_1 = H + e_0^\t e_{n-1}$ generates the algebra of circulant $n \times n$ matrices.
Then $A$ and $A^{-1}$ are approximated as follows
\begin{equation}\label{eq:bini}
 \begin{split}
  A \approx \tilde A_\eps & = \sum_{j=0}^{n-1} a(j) H_\eps^j 
                          = D_\eps^{-1} \left( \sum_{j=0}^{n-1} a(j) \eps^j C_j  \right) D_\eps 
                          = D_\eps^{-1} C_\eps D_\eps
                          = D_\eps^{-1} F^* \Lambda_\eps F D_\eps, \\
  A^{-1} \approx \tilde A_\eps^{-1} & = D_\eps^{-1} F^* \Lambda_\eps^{-1} F D_\eps, 
     \qquad \mbox{where}\quad \Lambda_\eps = \sqrt{n} \diag(F a_\eps), \quad a_\eps(j)=a(j)\eps^j.
 \end{split}
\end{equation}
The first column of $A_\eps^{-1}$ is computed using two FFTs of size $n.$

This idea was revised in~\cite{mng-bini-2004}, where it was proposed to apply Bini's algorithm to the first column $a$ of matrix $A$ padded with $n$ zero elements.
The revised version of Bini's algorithm requires two FFTs of size $2n$ and has better accuracy properties.

\subsection{Newton iteration}
The classical Newton iteration 
\begin{equation}\label{eq:nw}
 B_{k+1} = 2 B_k - B_k A B_k
\end{equation}
was proposed in~\cite{schulz-newton-1933} for the computing the inverse $A^{-1}$ of a nonsingular matrix $A.$
It converges quadratically if initial guess $B_0$ is s.t. $\|I - AB_0\|\leq 1$ in any operator norm of a matrix.
In~\cite{benisrael-newton-1966} it is shown that for $B_0 = \mu A^*$ with some small real $\mu$ Newton iteration converges to the inverse $A^{-1}$ of a nonsingular or pseudoinverse $A^\dagger$ of a singlular matrix $A.$
In~\cite{pan-newton-1991} further deep analysis is provided, for instance, it is shown that $\mu^{-1} = \|A\|_1\|A\|_{\infty}$ is a good and reliable choice.
In relation to Toeplitz and related structured matrices, the Newton iteration with approximation of the result on each step was developed using the concept of displacement ranks~\cite{pan-newton-2002} and tensor product approximations \cite{hkt-iter-2008,oot-newton-2008,ost-latensor-2009}.

For $A\in\T_n$ the choice of initial guess $B_0 = \mu A^*$ is not effective, since $A^* \notin\T_n$ and we can not perform iterations with $B_k\in\T_n,$ which grants low storage and fast multiplication.

If $B_0\in\T_n,$ every Newton iteration costs two convolutions, i.e., $6$ FFTs of size $2n.$ 
\begin{remark}
A single Newton iteration for lower triangular Toeplitz matrices is slower than the divide and conquer method.
\end{remark}
It is not easy to provide a good initial guess $B_0\in\T_n$ for which the Newton iteration with a given matrix $A\in\T_n$ converges in one or few steps.
However, Newton iteration can be used to improve the accuracy of matrix $B \approx A^{-1}, B\in\T_n$ computed by other means, if $\|I - AB\|\leq 1.$
For instance, we can note the following relation between the divide and conquer method and Newton iteration.
\begin{remark}
For matrix $A' \in \T_{2n}$ defined in~\eqref{eq:dc}, the Newton iteration~\eqref{eq:nw} with initial guess
\begin{equation}\nonumber
B_0 = \begin{bmatrix} A^{-1} & 0 \\ 0 & 0\end{bmatrix}, \qquad A \in \T_n,
\end{equation}
gives $B_1 = (A')^{-1}$, i.e., converges in one step and is equivalent to the divide and conquer method~\eqref{eq:dc}.
\end{remark}
Therefore, for $A\in\T_n,$ divide and conquer method is always better than the Newton iteration, which reduces to the divide and conquer method in the special case.

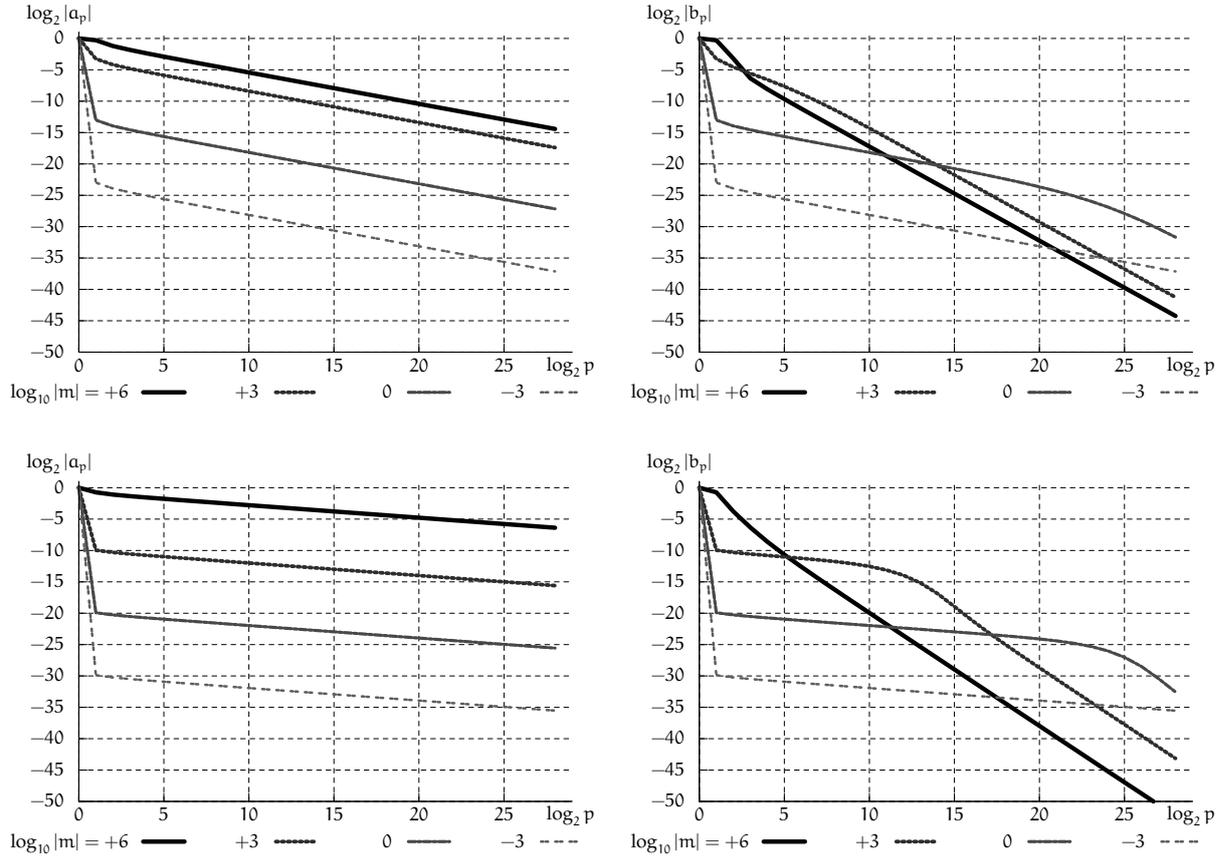
\begin{figure}[t]
 \begin{center} \hfil
  \resizebox{.48\textwidth}{!}{\begin{tikzpicture}[gnuplot]
\gpcolor{gp lt color axes}
\gpsetlinetype{gp lt axes}
\gpsetlinewidth{1.00}
\draw[gp path] (1.012,1.232)--(11.947,1.232);
\gpcolor{gp lt color border}
\gpsetlinetype{gp lt border}
\draw[gp path] (1.012,1.232)--(1.192,1.232);
\node[gp node right] at (0.828,1.232) {$-50$};
\gpcolor{gp lt color axes}
\gpsetlinetype{gp lt axes}
\draw[gp path] (1.012,1.933)--(11.947,1.933);
\gpcolor{gp lt color border}
\gpsetlinetype{gp lt border}
\draw[gp path] (1.012,1.933)--(1.192,1.933);
\node[gp node right] at (0.828,1.933) {$-45$};
\gpcolor{gp lt color axes}
\gpsetlinetype{gp lt axes}
\draw[gp path] (1.012,2.634)--(11.947,2.634);
\gpcolor{gp lt color border}
\gpsetlinetype{gp lt border}
\draw[gp path] (1.012,2.634)--(1.192,2.634);
\node[gp node right] at (0.828,2.634) {$-40$};
\gpcolor{gp lt color axes}
\gpsetlinetype{gp lt axes}
\draw[gp path] (1.012,3.335)--(11.947,3.335);
\gpcolor{gp lt color border}
\gpsetlinetype{gp lt border}
\draw[gp path] (1.012,3.335)--(1.192,3.335);
\node[gp node right] at (0.828,3.335) {$-35$};
\gpcolor{gp lt color axes}
\gpsetlinetype{gp lt axes}
\draw[gp path] (1.012,4.036)--(11.947,4.036);
\gpcolor{gp lt color border}
\gpsetlinetype{gp lt border}
\draw[gp path] (1.012,4.036)--(1.192,4.036);
\node[gp node right] at (0.828,4.036) {$-30$};
\gpcolor{gp lt color axes}
\gpsetlinetype{gp lt axes}
\draw[gp path] (1.012,4.736)--(11.947,4.736);
\gpcolor{gp lt color border}
\gpsetlinetype{gp lt border}
\draw[gp path] (1.012,4.736)--(1.192,4.736);
\node[gp node right] at (0.828,4.736) {$-25$};
\gpcolor{gp lt color axes}
\gpsetlinetype{gp lt axes}
\draw[gp path] (1.012,5.437)--(11.947,5.437);
\gpcolor{gp lt color border}
\gpsetlinetype{gp lt border}
\draw[gp path] (1.012,5.437)--(1.192,5.437);
\node[gp node right] at (0.828,5.437) {$-20$};
\gpcolor{gp lt color axes}
\gpsetlinetype{gp lt axes}
\draw[gp path] (1.012,6.138)--(11.947,6.138);
\gpcolor{gp lt color border}
\gpsetlinetype{gp lt border}
\draw[gp path] (1.012,6.138)--(1.192,6.138);
\node[gp node right] at (0.828,6.138) {$-15$};
\gpcolor{gp lt color axes}
\gpsetlinetype{gp lt axes}
\draw[gp path] (1.012,6.839)--(11.947,6.839);
\gpcolor{gp lt color border}
\gpsetlinetype{gp lt border}
\draw[gp path] (1.012,6.839)--(1.192,6.839);
\node[gp node right] at (0.828,6.839) {$-10$};
\gpcolor{gp lt color axes}
\gpsetlinetype{gp lt axes}
\draw[gp path] (1.012,7.540)--(11.947,7.540);
\gpcolor{gp lt color border}
\gpsetlinetype{gp lt border}
\draw[gp path] (1.012,7.540)--(1.192,7.540);
\node[gp node right] at (0.828,7.540) {$-5$};
\gpcolor{gp lt color axes}
\gpsetlinetype{gp lt axes}
\draw[gp path] (1.012,8.241)--(11.947,8.241);
\gpcolor{gp lt color border}
\gpsetlinetype{gp lt border}
\draw[gp path] (1.012,8.241)--(1.192,8.241);
\node[gp node right] at (0.828,8.241) {$0$};
\gpcolor{gp lt color axes}
\gpsetlinetype{gp lt axes}
\draw[gp path] (1.012,1.232)--(1.012,8.381);
\gpcolor{gp lt color border}
\gpsetlinetype{gp lt border}
\draw[gp path] (1.012,1.232)--(1.012,1.412);
\node[gp node center] at (1.012,0.924) {$0$};
\gpcolor{gp lt color axes}
\gpsetlinetype{gp lt axes}
\draw[gp path] (2.897,1.232)--(2.897,8.381);
\gpcolor{gp lt color border}
\gpsetlinetype{gp lt border}
\draw[gp path] (2.897,1.232)--(2.897,1.412);
\node[gp node center] at (2.897,0.924) {$5$};
\gpcolor{gp lt color axes}
\gpsetlinetype{gp lt axes}
\draw[gp path] (4.783,1.232)--(4.783,8.381);
\gpcolor{gp lt color border}
\gpsetlinetype{gp lt border}
\draw[gp path] (4.783,1.232)--(4.783,1.412);
\node[gp node center] at (4.783,0.924) {$10$};
\gpcolor{gp lt color axes}
\gpsetlinetype{gp lt axes}
\draw[gp path] (6.668,1.232)--(6.668,8.381);
\gpcolor{gp lt color border}
\gpsetlinetype{gp lt border}
\draw[gp path] (6.668,1.232)--(6.668,1.412);
\node[gp node center] at (6.668,0.924) {$15$};
\gpcolor{gp lt color axes}
\gpsetlinetype{gp lt axes}
\draw[gp path] (8.553,1.232)--(8.553,8.381);
\gpcolor{gp lt color border}
\gpsetlinetype{gp lt border}
\draw[gp path] (8.553,1.232)--(8.553,1.412);
\node[gp node center] at (8.553,0.924) {$20$};
\gpcolor{gp lt color axes}
\gpsetlinetype{gp lt axes}
\draw[gp path] (10.439,1.232)--(10.439,8.381);
\gpcolor{gp lt color border}
\gpsetlinetype{gp lt border}
\draw[gp path] (10.439,1.232)--(10.439,1.412);
\node[gp node center] at (10.439,0.924) {$25$};
\draw[gp path] (1.012,8.381)--(1.012,1.232)--(11.947,1.232);
\node[gp node center] at (0.575,8.810) {$\log_2|a_p|$};
\node[gp node right] at (12.603,0.946) {$\log_2p$};
\node[gp node right] at (2.255,0.334) {$\log_{10}|m|=+6$};
\gpcolor{\gprgb{0}{0}{0}}
\gpsetlinetype{gp lt plot 0}
\gpsetlinewidth{7.00}
\draw[gp path] (2.439,0.334)--(3.355,0.334);
\draw[gp path] (1.012,8.241)--(1.389,8.201)--(1.766,8.072)--(2.143,7.985)--(2.520,7.907)%
  --(2.897,7.834)--(3.274,7.762)--(3.651,7.691)--(4.029,7.621)--(4.406,7.551)--(4.783,7.480)%
  --(5.160,7.410)--(5.537,7.340)--(5.914,7.270)--(6.291,7.200)--(6.668,7.130)--(7.045,7.060)%
  --(7.422,6.990)--(7.799,6.920)--(8.176,6.850)--(8.553,6.779)--(8.930,6.709)--(9.308,6.639)%
  --(9.685,6.569)--(10.062,6.499)--(10.439,6.429)--(10.816,6.359)--(11.193,6.289)--(11.570,6.219);
\gpcolor{gp lt color border}
\node[gp node right] at (5.195,0.334) {$+3$};
\gpcolor{\gprgb{178}{178}{178}}
\gpsetlinetype{gp lt plot 2}
\gpsetlinewidth{6.00}
\draw[gp path] (5.379,0.334)--(6.295,0.334);
\draw[gp path] (1.012,8.241)--(1.389,7.785)--(1.766,7.655)--(2.143,7.568)--(2.520,7.491)%
  --(2.897,7.418)--(3.274,7.346)--(3.651,7.275)--(4.029,7.205)--(4.406,7.134)--(4.783,7.064)%
  --(5.160,6.994)--(5.537,6.924)--(5.914,6.854)--(6.291,6.784)--(6.668,6.714)--(7.045,6.644)%
  --(7.422,6.573)--(7.799,6.503)--(8.176,6.433)--(8.553,6.363)--(8.930,6.293)--(9.308,6.223)%
  --(9.685,6.153)--(10.062,6.083)--(10.439,6.013)--(10.816,5.943)--(11.193,5.873)--(11.570,5.803);
\gpcolor{gp lt color border}
\node[gp node right] at (8.135,0.334) {$ 0$};
\gpcolor{\gprgb{282}{282}{282}}
\gpsetlinetype{gp lt plot 4}
\gpsetlinewidth{5.00}
\draw[gp path] (8.319,0.334)--(9.235,0.334);
\draw[gp path] (1.012,8.241)--(1.389,6.416)--(1.766,6.286)--(2.143,6.199)--(2.520,6.122)%
  --(2.897,6.048)--(3.274,5.976)--(3.651,5.906)--(4.029,5.835)--(4.406,5.765)--(4.783,5.695)%
  --(5.160,5.624)--(5.537,5.554)--(5.914,5.484)--(6.291,5.414)--(6.668,5.344)--(7.045,5.274)%
  --(7.422,5.204)--(7.799,5.134)--(8.176,5.064)--(8.553,4.994)--(8.930,4.924)--(9.308,4.853)%
  --(9.685,4.783)--(10.062,4.713)--(10.439,4.643)--(10.816,4.573)--(11.193,4.503)--(11.570,4.433);
\gpcolor{gp lt color border}
\node[gp node right] at (11.075,0.334) {$-3$};
\gpcolor{\gprgb{370}{370}{370}}
\gpsetlinetype{gp lt plot 6}
\gpsetlinewidth{4.00}
\draw[gp path] (11.259,0.334)--(12.175,0.334);
\draw[gp path] (1.012,8.241)--(1.389,5.019)--(1.766,4.889)--(2.143,4.802)--(2.520,4.725)%
  --(2.897,4.651)--(3.274,4.580)--(3.651,4.509)--(4.029,4.438)--(4.406,4.368)--(4.783,4.298)%
  --(5.160,4.228)--(5.537,4.157)--(5.914,4.087)--(6.291,4.017)--(6.668,3.947)--(7.045,3.877)%
  --(7.422,3.807)--(7.799,3.737)--(8.176,3.667)--(8.553,3.597)--(8.930,3.527)--(9.308,3.457)%
  --(9.685,3.386)--(10.062,3.316)--(10.439,3.246)--(10.816,3.176)--(11.193,3.106)--(11.570,3.036);
\gpcolor{gp lt color border}
\gpsetlinetype{gp lt border}
\gpsetlinewidth{1.00}
\draw[gp path] (1.012,8.381)--(1.012,1.232)--(11.947,1.232);
\gpdefrectangularnode{gp plot 1}{\pgfpoint{1.012cm}{1.232cm}}{\pgfpoint{11.947cm}{8.381cm}}
\end{tikzpicture}
  \resizebox{.48\textwidth}{!}{\begin{tikzpicture}[gnuplot]
\gpcolor{gp lt color axes}
\gpsetlinetype{gp lt axes}
\gpsetlinewidth{1.00}
\draw[gp path] (1.012,1.232)--(11.947,1.232);
\gpcolor{gp lt color border}
\gpsetlinetype{gp lt border}
\draw[gp path] (1.012,1.232)--(1.192,1.232);
\node[gp node right] at (0.828,1.232) {$-50$};
\gpcolor{gp lt color axes}
\gpsetlinetype{gp lt axes}
\draw[gp path] (1.012,1.933)--(11.947,1.933);
\gpcolor{gp lt color border}
\gpsetlinetype{gp lt border}
\draw[gp path] (1.012,1.933)--(1.192,1.933);
\node[gp node right] at (0.828,1.933) {$-45$};
\gpcolor{gp lt color axes}
\gpsetlinetype{gp lt axes}
\draw[gp path] (1.012,2.634)--(11.947,2.634);
\gpcolor{gp lt color border}
\gpsetlinetype{gp lt border}
\draw[gp path] (1.012,2.634)--(1.192,2.634);
\node[gp node right] at (0.828,2.634) {$-40$};
\gpcolor{gp lt color axes}
\gpsetlinetype{gp lt axes}
\draw[gp path] (1.012,3.335)--(11.947,3.335);
\gpcolor{gp lt color border}
\gpsetlinetype{gp lt border}
\draw[gp path] (1.012,3.335)--(1.192,3.335);
\node[gp node right] at (0.828,3.335) {$-35$};
\gpcolor{gp lt color axes}
\gpsetlinetype{gp lt axes}
\draw[gp path] (1.012,4.036)--(11.947,4.036);
\gpcolor{gp lt color border}
\gpsetlinetype{gp lt border}
\draw[gp path] (1.012,4.036)--(1.192,4.036);
\node[gp node right] at (0.828,4.036) {$-30$};
\gpcolor{gp lt color axes}
\gpsetlinetype{gp lt axes}
\draw[gp path] (1.012,4.736)--(11.947,4.736);
\gpcolor{gp lt color border}
\gpsetlinetype{gp lt border}
\draw[gp path] (1.012,4.736)--(1.192,4.736);
\node[gp node right] at (0.828,4.736) {$-25$};
\gpcolor{gp lt color axes}
\gpsetlinetype{gp lt axes}
\draw[gp path] (1.012,5.437)--(11.947,5.437);
\gpcolor{gp lt color border}
\gpsetlinetype{gp lt border}
\draw[gp path] (1.012,5.437)--(1.192,5.437);
\node[gp node right] at (0.828,5.437) {$-20$};
\gpcolor{gp lt color axes}
\gpsetlinetype{gp lt axes}
\draw[gp path] (1.012,6.138)--(11.947,6.138);
\gpcolor{gp lt color border}
\gpsetlinetype{gp lt border}
\draw[gp path] (1.012,6.138)--(1.192,6.138);
\node[gp node right] at (0.828,6.138) {$-15$};
\gpcolor{gp lt color axes}
\gpsetlinetype{gp lt axes}
\draw[gp path] (1.012,6.839)--(11.947,6.839);
\gpcolor{gp lt color border}
\gpsetlinetype{gp lt border}
\draw[gp path] (1.012,6.839)--(1.192,6.839);
\node[gp node right] at (0.828,6.839) {$-10$};
\gpcolor{gp lt color axes}
\gpsetlinetype{gp lt axes}
\draw[gp path] (1.012,7.540)--(11.947,7.540);
\gpcolor{gp lt color border}
\gpsetlinetype{gp lt border}
\draw[gp path] (1.012,7.540)--(1.192,7.540);
\node[gp node right] at (0.828,7.540) {$-5$};
\gpcolor{gp lt color axes}
\gpsetlinetype{gp lt axes}
\draw[gp path] (1.012,8.241)--(11.947,8.241);
\gpcolor{gp lt color border}
\gpsetlinetype{gp lt border}
\draw[gp path] (1.012,8.241)--(1.192,8.241);
\node[gp node right] at (0.828,8.241) {$0$};
\gpcolor{gp lt color axes}
\gpsetlinetype{gp lt axes}
\draw[gp path] (1.012,1.232)--(1.012,8.381);
\gpcolor{gp lt color border}
\gpsetlinetype{gp lt border}
\draw[gp path] (1.012,1.232)--(1.012,1.412);
\node[gp node center] at (1.012,0.924) {$0$};
\gpcolor{gp lt color axes}
\gpsetlinetype{gp lt axes}
\draw[gp path] (2.897,1.232)--(2.897,8.381);
\gpcolor{gp lt color border}
\gpsetlinetype{gp lt border}
\draw[gp path] (2.897,1.232)--(2.897,1.412);
\node[gp node center] at (2.897,0.924) {$5$};
\gpcolor{gp lt color axes}
\gpsetlinetype{gp lt axes}
\draw[gp path] (4.783,1.232)--(4.783,8.381);
\gpcolor{gp lt color border}
\gpsetlinetype{gp lt border}
\draw[gp path] (4.783,1.232)--(4.783,1.412);
\node[gp node center] at (4.783,0.924) {$10$};
\gpcolor{gp lt color axes}
\gpsetlinetype{gp lt axes}
\draw[gp path] (6.668,1.232)--(6.668,8.381);
\gpcolor{gp lt color border}
\gpsetlinetype{gp lt border}
\draw[gp path] (6.668,1.232)--(6.668,1.412);
\node[gp node center] at (6.668,0.924) {$15$};
\gpcolor{gp lt color axes}
\gpsetlinetype{gp lt axes}
\draw[gp path] (8.553,1.232)--(8.553,8.381);
\gpcolor{gp lt color border}
\gpsetlinetype{gp lt border}
\draw[gp path] (8.553,1.232)--(8.553,1.412);
\node[gp node center] at (8.553,0.924) {$20$};
\gpcolor{gp lt color axes}
\gpsetlinetype{gp lt axes}
\draw[gp path] (10.439,1.232)--(10.439,8.381);
\gpcolor{gp lt color border}
\gpsetlinetype{gp lt border}
\draw[gp path] (10.439,1.232)--(10.439,1.412);
\node[gp node center] at (10.439,0.924) {$25$};
\draw[gp path] (1.012,8.381)--(1.012,1.232)--(11.947,1.232);
\node[gp node center] at (0.575,8.810) {$\log_2|b_p|$};
\node[gp node right] at (12.603,0.946) {$\log_2p$};
\node[gp node right] at (2.255,0.334) {$\log_{10}|m|=+6$};
\gpcolor{\gprgb{0}{0}{0}}
\gpsetlinetype{gp lt plot 0}
\gpsetlinewidth{7.00}
\draw[gp path] (2.439,0.334)--(3.355,0.334);
\draw[gp path] (1.012,8.241)--(1.389,8.201)--(1.766,7.792)--(2.143,7.350)--(2.520,7.107)%
  --(2.897,6.887)--(3.274,6.672)--(3.651,6.459)--(4.029,6.248)--(4.406,6.037)--(4.783,5.826)%
  --(5.160,5.616)--(5.537,5.406)--(5.914,5.195)--(6.291,4.985)--(6.668,4.775)--(7.045,4.564)%
  --(7.422,4.354)--(7.799,4.144)--(8.176,3.934)--(8.553,3.723)--(8.930,3.513)--(9.308,3.303)%
  --(9.685,3.093)--(10.062,2.882)--(10.439,2.672)--(10.816,2.462)--(11.193,2.252)--(11.570,2.041);
\gpcolor{gp lt color border}
\node[gp node right] at (5.195,0.334) {$+3$};
\gpcolor{\gprgb{178}{178}{178}}
\gpsetlinetype{gp lt plot 2}
\gpsetlinewidth{6.00}
\draw[gp path] (5.379,0.334)--(6.295,0.334);
\draw[gp path] (1.012,8.241)--(1.389,7.785)--(1.766,7.600)--(2.143,7.461)--(2.520,7.320)%
  --(2.897,7.169)--(3.274,7.004)--(3.651,6.826)--(4.029,6.637)--(4.406,6.439)--(4.783,6.235)%
  --(5.160,6.028)--(5.537,5.820)--(5.914,5.611)--(6.291,5.401)--(6.668,5.191)--(7.045,4.981)%
  --(7.422,4.770)--(7.799,4.560)--(8.176,4.350)--(8.553,4.140)--(8.930,3.929)--(9.308,3.719)%
  --(9.685,3.509)--(10.062,3.299)--(10.439,3.088)--(10.816,2.878)--(11.193,2.668)--(11.570,2.458);
\gpcolor{gp lt color border}
\node[gp node right] at (8.135,0.334) {$ 0$};
\gpcolor{\gprgb{282}{282}{282}}
\gpsetlinetype{gp lt plot 4}
\gpsetlinewidth{5.00}
\draw[gp path] (8.319,0.334)--(9.235,0.334);
\draw[gp path] (1.012,8.241)--(1.389,6.416)--(1.766,6.286)--(2.143,6.199)--(2.520,6.121)%
  --(2.897,6.048)--(3.274,5.976)--(3.651,5.905)--(4.029,5.834)--(4.406,5.763)--(4.783,5.692)%
  --(5.160,5.621)--(5.537,5.550)--(5.914,5.478)--(6.291,5.405)--(6.668,5.332)--(7.045,5.257)%
  --(7.422,5.179)--(7.799,5.099)--(8.176,5.015)--(8.553,4.926)--(8.930,4.830)--(9.308,4.724)%
  --(9.685,4.607)--(10.062,4.477)--(10.439,4.330)--(10.816,4.168)--(11.193,3.990)--(11.570,3.801);
\gpcolor{gp lt color border}
\node[gp node right] at (11.075,0.334) {$-3$};
\gpcolor{\gprgb{370}{370}{370}}
\gpsetlinetype{gp lt plot 6}
\gpsetlinewidth{4.00}
\draw[gp path] (11.259,0.334)--(12.175,0.334);
\draw[gp path] (1.012,8.241)--(1.389,5.019)--(1.766,4.889)--(2.143,4.802)--(2.520,4.725)%
  --(2.897,4.651)--(3.274,4.580)--(3.651,4.509)--(4.029,4.438)--(4.406,4.368)--(4.783,4.298)%
  --(5.160,4.228)--(5.537,4.157)--(5.914,4.087)--(6.291,4.017)--(6.668,3.947)--(7.045,3.877)%
  --(7.422,3.807)--(7.799,3.737)--(8.176,3.667)--(8.553,3.597)--(8.930,3.527)--(9.308,3.456)%
  --(9.685,3.386)--(10.062,3.316)--(10.439,3.246)--(10.816,3.176)--(11.193,3.105)--(11.570,3.035);
\gpcolor{gp lt color border}
\gpsetlinetype{gp lt border}
\gpsetlinewidth{1.00}
\draw[gp path] (1.012,8.381)--(1.012,1.232)--(11.947,1.232);
\gpdefrectangularnode{gp plot 1}{\pgfpoint{1.012cm}{1.232cm}}{\pgfpoint{11.947cm}{8.381cm}}
\end{tikzpicture}
 \end{center}
 \begin{center} \hfil
  \resizebox{.48\textwidth}{!}{\begin{tikzpicture}[gnuplot]
\gpcolor{gp lt color axes}
\gpsetlinetype{gp lt axes}
\gpsetlinewidth{1.00}
\draw[gp path] (1.012,1.232)--(11.947,1.232);
\gpcolor{gp lt color border}
\gpsetlinetype{gp lt border}
\draw[gp path] (1.012,1.232)--(1.192,1.232);
\node[gp node right] at (0.828,1.232) {$-50$};
\gpcolor{gp lt color axes}
\gpsetlinetype{gp lt axes}
\draw[gp path] (1.012,1.933)--(11.947,1.933);
\gpcolor{gp lt color border}
\gpsetlinetype{gp lt border}
\draw[gp path] (1.012,1.933)--(1.192,1.933);
\node[gp node right] at (0.828,1.933) {$-45$};
\gpcolor{gp lt color axes}
\gpsetlinetype{gp lt axes}
\draw[gp path] (1.012,2.634)--(11.947,2.634);
\gpcolor{gp lt color border}
\gpsetlinetype{gp lt border}
\draw[gp path] (1.012,2.634)--(1.192,2.634);
\node[gp node right] at (0.828,2.634) {$-40$};
\gpcolor{gp lt color axes}
\gpsetlinetype{gp lt axes}
\draw[gp path] (1.012,3.335)--(11.947,3.335);
\gpcolor{gp lt color border}
\gpsetlinetype{gp lt border}
\draw[gp path] (1.012,3.335)--(1.192,3.335);
\node[gp node right] at (0.828,3.335) {$-35$};
\gpcolor{gp lt color axes}
\gpsetlinetype{gp lt axes}
\draw[gp path] (1.012,4.036)--(11.947,4.036);
\gpcolor{gp lt color border}
\gpsetlinetype{gp lt border}
\draw[gp path] (1.012,4.036)--(1.192,4.036);
\node[gp node right] at (0.828,4.036) {$-30$};
\gpcolor{gp lt color axes}
\gpsetlinetype{gp lt axes}
\draw[gp path] (1.012,4.736)--(11.947,4.736);
\gpcolor{gp lt color border}
\gpsetlinetype{gp lt border}
\draw[gp path] (1.012,4.736)--(1.192,4.736);
\node[gp node right] at (0.828,4.736) {$-25$};
\gpcolor{gp lt color axes}
\gpsetlinetype{gp lt axes}
\draw[gp path] (1.012,5.437)--(11.947,5.437);
\gpcolor{gp lt color border}
\gpsetlinetype{gp lt border}
\draw[gp path] (1.012,5.437)--(1.192,5.437);
\node[gp node right] at (0.828,5.437) {$-20$};
\gpcolor{gp lt color axes}
\gpsetlinetype{gp lt axes}
\draw[gp path] (1.012,6.138)--(11.947,6.138);
\gpcolor{gp lt color border}
\gpsetlinetype{gp lt border}
\draw[gp path] (1.012,6.138)--(1.192,6.138);
\node[gp node right] at (0.828,6.138) {$-15$};
\gpcolor{gp lt color axes}
\gpsetlinetype{gp lt axes}
\draw[gp path] (1.012,6.839)--(11.947,6.839);
\gpcolor{gp lt color border}
\gpsetlinetype{gp lt border}
\draw[gp path] (1.012,6.839)--(1.192,6.839);
\node[gp node right] at (0.828,6.839) {$-10$};
\gpcolor{gp lt color axes}
\gpsetlinetype{gp lt axes}
\draw[gp path] (1.012,7.540)--(11.947,7.540);
\gpcolor{gp lt color border}
\gpsetlinetype{gp lt border}
\draw[gp path] (1.012,7.540)--(1.192,7.540);
\node[gp node right] at (0.828,7.540) {$-5$};
\gpcolor{gp lt color axes}
\gpsetlinetype{gp lt axes}
\draw[gp path] (1.012,8.241)--(11.947,8.241);
\gpcolor{gp lt color border}
\gpsetlinetype{gp lt border}
\draw[gp path] (1.012,8.241)--(1.192,8.241);
\node[gp node right] at (0.828,8.241) {$0$};
\gpcolor{gp lt color axes}
\gpsetlinetype{gp lt axes}
\draw[gp path] (1.012,1.232)--(1.012,8.381);
\gpcolor{gp lt color border}
\gpsetlinetype{gp lt border}
\draw[gp path] (1.012,1.232)--(1.012,1.412);
\node[gp node center] at (1.012,0.924) {$0$};
\gpcolor{gp lt color axes}
\gpsetlinetype{gp lt axes}
\draw[gp path] (2.897,1.232)--(2.897,8.381);
\gpcolor{gp lt color border}
\gpsetlinetype{gp lt border}
\draw[gp path] (2.897,1.232)--(2.897,1.412);
\node[gp node center] at (2.897,0.924) {$5$};
\gpcolor{gp lt color axes}
\gpsetlinetype{gp lt axes}
\draw[gp path] (4.783,1.232)--(4.783,8.381);
\gpcolor{gp lt color border}
\gpsetlinetype{gp lt border}
\draw[gp path] (4.783,1.232)--(4.783,1.412);
\node[gp node center] at (4.783,0.924) {$10$};
\gpcolor{gp lt color axes}
\gpsetlinetype{gp lt axes}
\draw[gp path] (6.668,1.232)--(6.668,8.381);
\gpcolor{gp lt color border}
\gpsetlinetype{gp lt border}
\draw[gp path] (6.668,1.232)--(6.668,1.412);
\node[gp node center] at (6.668,0.924) {$15$};
\gpcolor{gp lt color axes}
\gpsetlinetype{gp lt axes}
\draw[gp path] (8.553,1.232)--(8.553,8.381);
\gpcolor{gp lt color border}
\gpsetlinetype{gp lt border}
\draw[gp path] (8.553,1.232)--(8.553,1.412);
\node[gp node center] at (8.553,0.924) {$20$};
\gpcolor{gp lt color axes}
\gpsetlinetype{gp lt axes}
\draw[gp path] (10.439,1.232)--(10.439,8.381);
\gpcolor{gp lt color border}
\gpsetlinetype{gp lt border}
\draw[gp path] (10.439,1.232)--(10.439,1.412);
\node[gp node center] at (10.439,0.924) {$25$};
\draw[gp path] (1.012,8.381)--(1.012,1.232)--(11.947,1.232);
\node[gp node center] at (0.575,8.810) {$\log_2|a_p|$};
\node[gp node right] at (12.603,0.946) {$\log_2p$};
\node[gp node right] at (2.255,0.334) {$\log_{10}|m|=+6$};
\gpcolor{\gprgb{0}{0}{0}}
\gpsetlinetype{gp lt plot 0}
\gpsetlinewidth{7.00}
\draw[gp path] (2.439,0.334)--(3.355,0.334);
\draw[gp path] (1.012,8.241)--(1.389,8.138)--(1.766,8.088)--(2.143,8.053)--(2.520,8.022)%
  --(2.897,7.993)--(3.274,7.964)--(3.651,7.936)--(4.029,7.908)--(4.406,7.879)--(4.783,7.851)%
  --(5.160,7.823)--(5.537,7.795)--(5.914,7.767)--(6.291,7.739)--(6.668,7.711)--(7.045,7.683)%
  --(7.422,7.655)--(7.799,7.627)--(8.176,7.599)--(8.553,7.571)--(8.930,7.543)--(9.308,7.515)%
  --(9.685,7.487)--(10.062,7.459)--(10.439,7.431)--(10.816,7.403)--(11.193,7.375)--(11.570,7.347);
\gpcolor{gp lt color border}
\node[gp node right] at (5.195,0.334) {$+3$};
\gpcolor{\gprgb{178}{178}{178}}
\gpsetlinetype{gp lt plot 2}
\gpsetlinewidth{6.00}
\draw[gp path] (5.379,0.334)--(6.295,0.334);
\draw[gp path] (1.012,8.241)--(1.389,6.845)--(1.766,6.796)--(2.143,6.761)--(2.520,6.730)%
  --(2.897,6.701)--(3.274,6.672)--(3.651,6.644)--(4.029,6.615)--(4.406,6.587)--(4.783,6.559)%
  --(5.160,6.531)--(5.537,6.503)--(5.914,6.475)--(6.291,6.447)--(6.668,6.419)--(7.045,6.391)%
  --(7.422,6.363)--(7.799,6.335)--(8.176,6.307)--(8.553,6.279)--(8.930,6.251)--(9.308,6.223)%
  --(9.685,6.195)--(10.062,6.167)--(10.439,6.139)--(10.816,6.111)--(11.193,6.083)--(11.570,6.054);
\gpcolor{gp lt color border}
\node[gp node right] at (8.135,0.334) {$ 0$};
\gpcolor{\gprgb{282}{282}{282}}
\gpsetlinetype{gp lt plot 4}
\gpsetlinewidth{5.00}
\draw[gp path] (8.319,0.334)--(9.235,0.334);
\draw[gp path] (1.012,8.241)--(1.389,5.448)--(1.766,5.399)--(2.143,5.364)--(2.520,5.333)%
  --(2.897,5.304)--(3.274,5.275)--(3.651,5.247)--(4.029,5.219)--(4.406,5.190)--(4.783,5.162)%
  --(5.160,5.134)--(5.537,5.106)--(5.914,5.078)--(6.291,5.050)--(6.668,5.022)--(7.045,4.994)%
  --(7.422,4.966)--(7.799,4.938)--(8.176,4.910)--(8.553,4.882)--(8.930,4.854)--(9.308,4.826)%
  --(9.685,4.798)--(10.062,4.770)--(10.439,4.742)--(10.816,4.714)--(11.193,4.686)--(11.570,4.658);
\gpcolor{gp lt color border}
\node[gp node right] at (11.075,0.334) {$-3$};
\gpcolor{\gprgb{370}{370}{370}}
\gpsetlinetype{gp lt plot 6}
\gpsetlinewidth{4.00}
\draw[gp path] (11.259,0.334)--(12.175,0.334);
\draw[gp path] (1.012,8.241)--(1.389,4.052)--(1.766,4.002)--(2.143,3.967)--(2.520,3.936)%
  --(2.897,3.907)--(3.274,3.878)--(3.651,3.850)--(4.029,3.822)--(4.406,3.793)--(4.783,3.765)%
  --(5.160,3.737)--(5.537,3.709)--(5.914,3.681)--(6.291,3.653)--(6.668,3.625)--(7.045,3.597)%
  --(7.422,3.569)--(7.799,3.541)--(8.176,3.513)--(8.553,3.485)--(8.930,3.457)--(9.308,3.429)%
  --(9.685,3.401)--(10.062,3.373)--(10.439,3.345)--(10.816,3.317)--(11.193,3.289)--(11.570,3.261);
\gpcolor{gp lt color border}
\gpsetlinetype{gp lt border}
\gpsetlinewidth{1.00}
\draw[gp path] (1.012,8.381)--(1.012,1.232)--(11.947,1.232);
\gpdefrectangularnode{gp plot 1}{\pgfpoint{1.012cm}{1.232cm}}{\pgfpoint{11.947cm}{8.381cm}}
\end{tikzpicture}
  \resizebox{.48\textwidth}{!}{\begin{tikzpicture}[gnuplot]
\gpcolor{gp lt color axes}
\gpsetlinetype{gp lt axes}
\gpsetlinewidth{1.00}
\draw[gp path] (1.012,1.232)--(11.947,1.232);
\gpcolor{gp lt color border}
\gpsetlinetype{gp lt border}
\draw[gp path] (1.012,1.232)--(1.192,1.232);
\node[gp node right] at (0.828,1.232) {$-50$};
\gpcolor{gp lt color axes}
\gpsetlinetype{gp lt axes}
\draw[gp path] (1.012,1.933)--(11.947,1.933);
\gpcolor{gp lt color border}
\gpsetlinetype{gp lt border}
\draw[gp path] (1.012,1.933)--(1.192,1.933);
\node[gp node right] at (0.828,1.933) {$-45$};
\gpcolor{gp lt color axes}
\gpsetlinetype{gp lt axes}
\draw[gp path] (1.012,2.634)--(11.947,2.634);
\gpcolor{gp lt color border}
\gpsetlinetype{gp lt border}
\draw[gp path] (1.012,2.634)--(1.192,2.634);
\node[gp node right] at (0.828,2.634) {$-40$};
\gpcolor{gp lt color axes}
\gpsetlinetype{gp lt axes}
\draw[gp path] (1.012,3.335)--(11.947,3.335);
\gpcolor{gp lt color border}
\gpsetlinetype{gp lt border}
\draw[gp path] (1.012,3.335)--(1.192,3.335);
\node[gp node right] at (0.828,3.335) {$-35$};
\gpcolor{gp lt color axes}
\gpsetlinetype{gp lt axes}
\draw[gp path] (1.012,4.036)--(11.947,4.036);
\gpcolor{gp lt color border}
\gpsetlinetype{gp lt border}
\draw[gp path] (1.012,4.036)--(1.192,4.036);
\node[gp node right] at (0.828,4.036) {$-30$};
\gpcolor{gp lt color axes}
\gpsetlinetype{gp lt axes}
\draw[gp path] (1.012,4.736)--(11.947,4.736);
\gpcolor{gp lt color border}
\gpsetlinetype{gp lt border}
\draw[gp path] (1.012,4.736)--(1.192,4.736);
\node[gp node right] at (0.828,4.736) {$-25$};
\gpcolor{gp lt color axes}
\gpsetlinetype{gp lt axes}
\draw[gp path] (1.012,5.437)--(11.947,5.437);
\gpcolor{gp lt color border}
\gpsetlinetype{gp lt border}
\draw[gp path] (1.012,5.437)--(1.192,5.437);
\node[gp node right] at (0.828,5.437) {$-20$};
\gpcolor{gp lt color axes}
\gpsetlinetype{gp lt axes}
\draw[gp path] (1.012,6.138)--(11.947,6.138);
\gpcolor{gp lt color border}
\gpsetlinetype{gp lt border}
\draw[gp path] (1.012,6.138)--(1.192,6.138);
\node[gp node right] at (0.828,6.138) {$-15$};
\gpcolor{gp lt color axes}
\gpsetlinetype{gp lt axes}
\draw[gp path] (1.012,6.839)--(11.947,6.839);
\gpcolor{gp lt color border}
\gpsetlinetype{gp lt border}
\draw[gp path] (1.012,6.839)--(1.192,6.839);
\node[gp node right] at (0.828,6.839) {$-10$};
\gpcolor{gp lt color axes}
\gpsetlinetype{gp lt axes}
\draw[gp path] (1.012,7.540)--(11.947,7.540);
\gpcolor{gp lt color border}
\gpsetlinetype{gp lt border}
\draw[gp path] (1.012,7.540)--(1.192,7.540);
\node[gp node right] at (0.828,7.540) {$-5$};
\gpcolor{gp lt color axes}
\gpsetlinetype{gp lt axes}
\draw[gp path] (1.012,8.241)--(11.947,8.241);
\gpcolor{gp lt color border}
\gpsetlinetype{gp lt border}
\draw[gp path] (1.012,8.241)--(1.192,8.241);
\node[gp node right] at (0.828,8.241) {$0$};
\gpcolor{gp lt color axes}
\gpsetlinetype{gp lt axes}
\draw[gp path] (1.012,1.232)--(1.012,8.381);
\gpcolor{gp lt color border}
\gpsetlinetype{gp lt border}
\draw[gp path] (1.012,1.232)--(1.012,1.412);
\node[gp node center] at (1.012,0.924) {$0$};
\gpcolor{gp lt color axes}
\gpsetlinetype{gp lt axes}
\draw[gp path] (2.897,1.232)--(2.897,8.381);
\gpcolor{gp lt color border}
\gpsetlinetype{gp lt border}
\draw[gp path] (2.897,1.232)--(2.897,1.412);
\node[gp node center] at (2.897,0.924) {$5$};
\gpcolor{gp lt color axes}
\gpsetlinetype{gp lt axes}
\draw[gp path] (4.783,1.232)--(4.783,8.381);
\gpcolor{gp lt color border}
\gpsetlinetype{gp lt border}
\draw[gp path] (4.783,1.232)--(4.783,1.412);
\node[gp node center] at (4.783,0.924) {$10$};
\gpcolor{gp lt color axes}
\gpsetlinetype{gp lt axes}
\draw[gp path] (6.668,1.232)--(6.668,8.381);
\gpcolor{gp lt color border}
\gpsetlinetype{gp lt border}
\draw[gp path] (6.668,1.232)--(6.668,1.412);
\node[gp node center] at (6.668,0.924) {$15$};
\gpcolor{gp lt color axes}
\gpsetlinetype{gp lt axes}
\draw[gp path] (8.553,1.232)--(8.553,8.381);
\gpcolor{gp lt color border}
\gpsetlinetype{gp lt border}
\draw[gp path] (8.553,1.232)--(8.553,1.412);
\node[gp node center] at (8.553,0.924) {$20$};
\gpcolor{gp lt color axes}
\gpsetlinetype{gp lt axes}
\draw[gp path] (10.439,1.232)--(10.439,8.381);
\gpcolor{gp lt color border}
\gpsetlinetype{gp lt border}
\draw[gp path] (10.439,1.232)--(10.439,1.412);
\node[gp node center] at (10.439,0.924) {$25$};
\draw[gp path] (1.012,8.381)--(1.012,1.232)--(11.947,1.232);
\node[gp node center] at (0.575,8.810) {$\log_2|b_p|$};
\node[gp node right] at (12.603,0.946) {$\log_2p$};
\node[gp node right] at (2.255,0.334) {$\log_{10}|m|=+6$};
\gpcolor{\gprgb{0}{0}{0}}
\gpsetlinetype{gp lt plot 0}
\gpsetlinewidth{7.00}
\draw[gp path] (2.439,0.334)--(3.355,0.334);
\draw[gp path] (1.012,8.241)--(1.389,8.138)--(1.766,7.711)--(2.143,7.354)--(2.520,7.034)%
  --(2.897,6.745)--(3.274,6.474)--(3.651,6.212)--(4.029,5.954)--(4.406,5.699)--(4.783,5.445)%
  --(5.160,5.192)--(5.537,4.939)--(5.914,4.686)--(6.291,4.434)--(6.668,4.181)--(7.045,3.929)%
  --(7.422,3.677)--(7.799,3.424)--(8.176,3.172)--(8.553,2.920)--(8.930,2.667)--(9.308,2.415)%
  --(9.685,2.163)--(10.062,1.910)--(10.439,1.659)--(10.816,1.409)--(11.081,1.232);
\gpcolor{gp lt color border}
\node[gp node right] at (5.195,0.334) {$+3$};
\gpcolor{\gprgb{178}{178}{178}}
\gpsetlinetype{gp lt plot 2}
\gpsetlinewidth{6.00}
\draw[gp path] (5.379,0.334)--(6.295,0.334);
\draw[gp path] (1.012,8.241)--(1.389,6.845)--(1.766,6.795)--(2.143,6.760)--(2.520,6.728)%
  --(2.897,6.696)--(3.274,6.664)--(3.651,6.629)--(4.029,6.591)--(4.406,6.544)--(4.783,6.484)%
  --(5.160,6.402)--(5.537,6.285)--(5.914,6.114)--(6.291,5.877)--(6.668,5.586)--(7.045,5.288)%
  --(7.422,5.006)--(7.799,4.737)--(8.176,4.476)--(8.553,4.219)--(8.930,3.963)--(9.308,3.709)%
  --(9.685,3.456)--(10.062,3.203)--(10.439,2.951)--(10.816,2.698)--(11.193,2.446)--(11.570,2.193);
\gpcolor{gp lt color border}
\node[gp node right] at (8.135,0.334) {$ 0$};
\gpcolor{\gprgb{282}{282}{282}}
\gpsetlinetype{gp lt plot 4}
\gpsetlinewidth{5.00}
\draw[gp path] (8.319,0.334)--(9.235,0.334);
\draw[gp path] (1.012,8.241)--(1.389,5.448)--(1.766,5.399)--(2.143,5.364)--(2.520,5.333)%
  --(2.897,5.304)--(3.274,5.275)--(3.651,5.247)--(4.029,5.218)--(4.406,5.190)--(4.783,5.162)%
  --(5.160,5.134)--(5.537,5.106)--(5.914,5.078)--(6.291,5.049)--(6.668,5.021)--(7.045,4.992)%
  --(7.422,4.962)--(7.799,4.932)--(8.176,4.899)--(8.553,4.862)--(8.930,4.820)--(9.308,4.767)%
  --(9.685,4.697)--(10.062,4.598)--(10.439,4.453)--(10.816,4.247)--(11.193,3.977)--(11.570,3.677);
\gpcolor{gp lt color border}
\node[gp node right] at (11.075,0.334) {$-3$};
\gpcolor{\gprgb{370}{370}{370}}
\gpsetlinetype{gp lt plot 6}
\gpsetlinewidth{4.00}
\draw[gp path] (11.259,0.334)--(12.175,0.334);
\draw[gp path] (1.012,8.241)--(1.389,4.052)--(1.766,4.002)--(2.143,3.967)--(2.520,3.936)%
  --(2.897,3.907)--(3.274,3.878)--(3.651,3.850)--(4.029,3.822)--(4.406,3.793)--(4.783,3.765)%
  --(5.160,3.737)--(5.537,3.709)--(5.914,3.681)--(6.291,3.653)--(6.668,3.625)--(7.045,3.597)%
  --(7.422,3.569)--(7.799,3.541)--(8.176,3.513)--(8.553,3.485)--(8.930,3.457)--(9.308,3.429)%
  --(9.685,3.401)--(10.062,3.373)--(10.439,3.344)--(10.816,3.316)--(11.193,3.288)--(11.570,3.259);
\gpcolor{gp lt color border}
\gpsetlinetype{gp lt border}
\gpsetlinewidth{1.00}
\draw[gp path] (1.012,8.381)--(1.012,1.232)--(11.947,1.232);
\gpdefrectangularnode{gp plot 1}{\pgfpoint{1.012cm}{1.232cm}}{\pgfpoint{11.947cm}{8.381cm}}
\end{tikzpicture}
 \end{center}
\caption{Decay profiles of triangular Toeplitz matrix~\eqref{eq3} (left) and its inverse (right) for $n=2^{28},$ $h=T/n,$  $T=10$ and $\alpha=0.5$ (top) and $\alpha=0.8$ (bottom) and for different mass $m.$} 
\label{fig:decay} 
\end{figure}
\subsection{Decay of the elements of inverse matrix}
It is instructive to look at the decay profiles of the elements of a triangular Toeplitz matrix~\eqref{eq3} and its inverse, see Fig.~\ref{fig:decay}.
There is a jump in magnitude between diagonal and subdiagonal elements, i.e.,
$$
\frac{a_0}{a_1} = \frac{1-(\gamma m)^{-1}}{2^{\alpha+1} - 2}, \qquad \gamma=\frac{h^\alpha}{\Gamma(\alpha+2)},
$$
where the numerator increases when $n \to \infty,$ $h \to 0$ and tends to one when $m \to -\infty.$
After the jump, elements decay polynomially, i.e., $a_p \sim p^{\alpha-1}$ for $p \geq 1.$ 
For the inverse matrix the behaviour is the same for a certain (possibly very long) set of elements.
However, after certain point the rate of decay changes from $1-\alpha$ to $1+\alpha,$ i.e., $b_p \sim p^{-\alpha-1}$ for $p\geq P.$
The bend point $P$ which is obtained from the experiment, is the monotonically decreasing function $P=P(\h m),$ i.e. the larger is the initial jump, the later the decay of element of the inverse matrix switches to faster rate.
The observed behaviour of elements of inverse matrix allows us to predict the upper bound for the norm of the second half of vector, using the information about the first half.
We will use this property in the next subsection, where the divide and conquer algorithm will be adapted for the vectors approximately given in the low--parametrical tensor--structured format.

\section{Inversion of triangular Toeplitz matrices using QTT approximation} \label{TTINV}
\subsection{Tensor train and quantized tensor train formats}
A \emph{tensor} is an array with $d$ indices (or \emph{modes})
$$
\A = [a(k_1,\ldots,k_d)], \qquad k_p = 0,\ldots,n_p-1, \quad p=1,\ldots,d.
$$
The tensor train (TT) format~\cite{osel-newten-2009eng,osel-tt-2011} for the tensor $\A$ reads\footnote{We will often write the equations in elementwise form, which assumes that all indices run through all possible values.}
\begin{equation}\label{eq:tt}
 a(k_1,k_2,\ldots,k_d) = A^{(1)}_{k_1} A^{(2)}_{k_2} \ldots A^{(d)}_{k_d},
\end{equation}
where each $A^{(p)}_{k_p}$ is an $r_{p-1} \x r_p$ matrix.
Usually the \emph{border conditions} $r_0 = r_d = 1$ are imposed to make every entry $a(k_1,\ldots,k_d)$ a scalar.
However, larger $r_0$ and $r_d$ can be considered and every entry of a tensor $\A = [a(k_1,\ldots,k_d)]$ becomes an $r_0 \x r_d$ matrix.
Values $r_0,\ldots,r_{d-1}$ are referred to as~\emph{TT--ranks} and characterize the~\emph{separation properties} of the tensor~$\A.$
Three-dimensional arrays $A^{(p)} = [A^{(p)}_{k_p}]$ are referred to as~\emph{TT--cores}.

To apply the TT compression to low dimensional data, the idea of \emph{quantization} was proposed~\cite{osel-2d2d-2010,khor-qtt-2011}.
We will explain the idea for a one-dimensional vector $a = [a(k)]_{k=0}^{n-1},$ restricting the discussion to $n=2^d.$
Define the binary notation of index $k$ as follows
\begin{equation}\label{eq:bit}
      k=\overline{k_1\ldots k_d}  \eqdef \sum_{p=1}^d k_p 2^{p-1}, \qquad k_p=0,1.
\end{equation}
The isomorphic mapping $k \leftrightarrow (k_1,\ldots k_d)$ allows us to \emph{reshape} a vector $a=[a(k)]$ into the $d$--tensor $\dot\A=[\dot a(k_1,\ldots,k_d)].$
The TT format~\eqref{eq:tt} for the latter is called the \emph{QTT format} and reads
\begin{equation} \label{eq:qtt}
 a(k) = a(\overline{k_1 \ldots k_d}) = \dot a(k_1,\ldots,k_d) = A^{(1)}_{k_1} \ldots A^{(d)}_{k_d}.
\end{equation}
This idea appears in~\cite{osel-2d2d-2010} in the context of matrix approximation.
In~\cite{khor-qtt-2011} the TT format applied after the quantization of indices was called the \emph{QTT format} and applied to a class of functions discretized on uniform grids, revealing the impressive approximation properties.
In particular, it was proven that the QTT--ranks of $\exp x,$ $\sin x,$ $\cos x,$ $x^p$ are uniformly bounded w.r.t. the grid size.
For the functions $e^{-\alpha x^2},$  $x^\alpha,$  $\frac{\sin x}{x}$, $\frac{1}{x},$ etc., similar properties were found experimentally.


\begin{figure}[t]
 \begin{center} \hfil
  \includegraphics[width=.48\textwidth]{./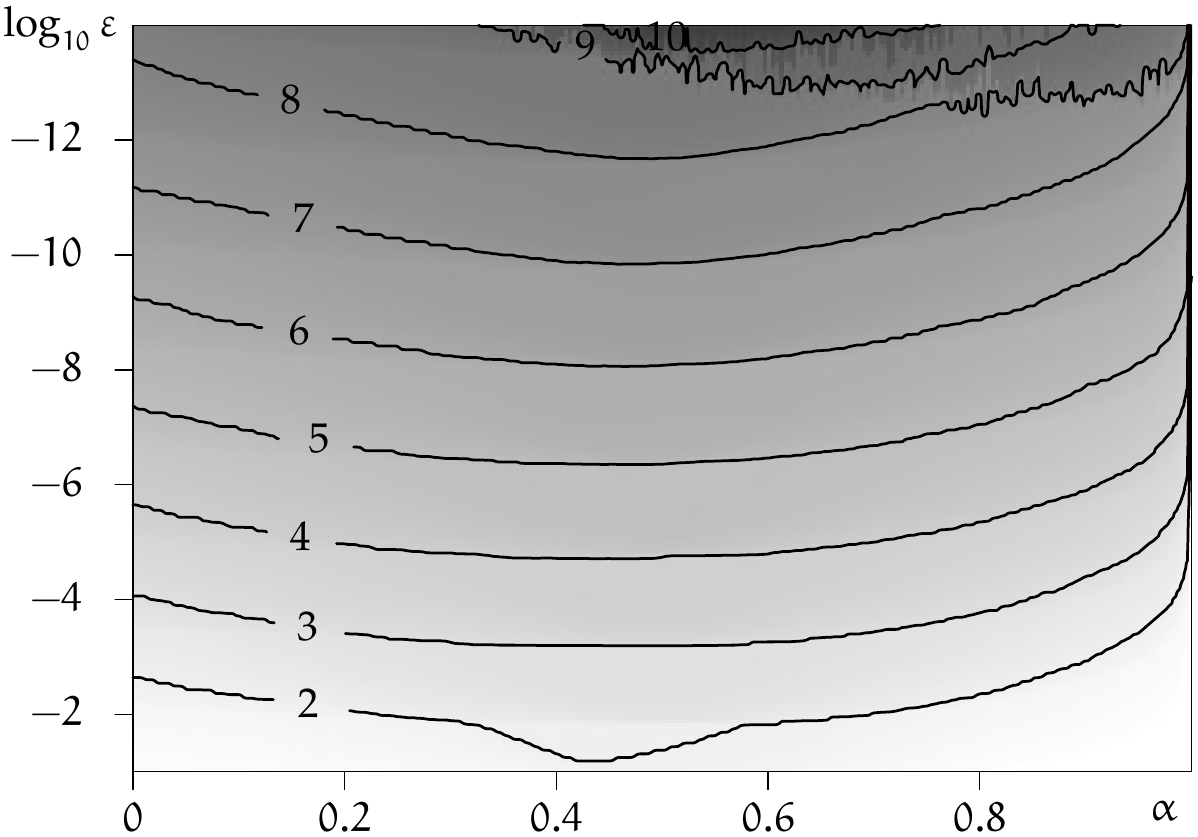} \hfil
  \includegraphics[width=.48\textwidth]{./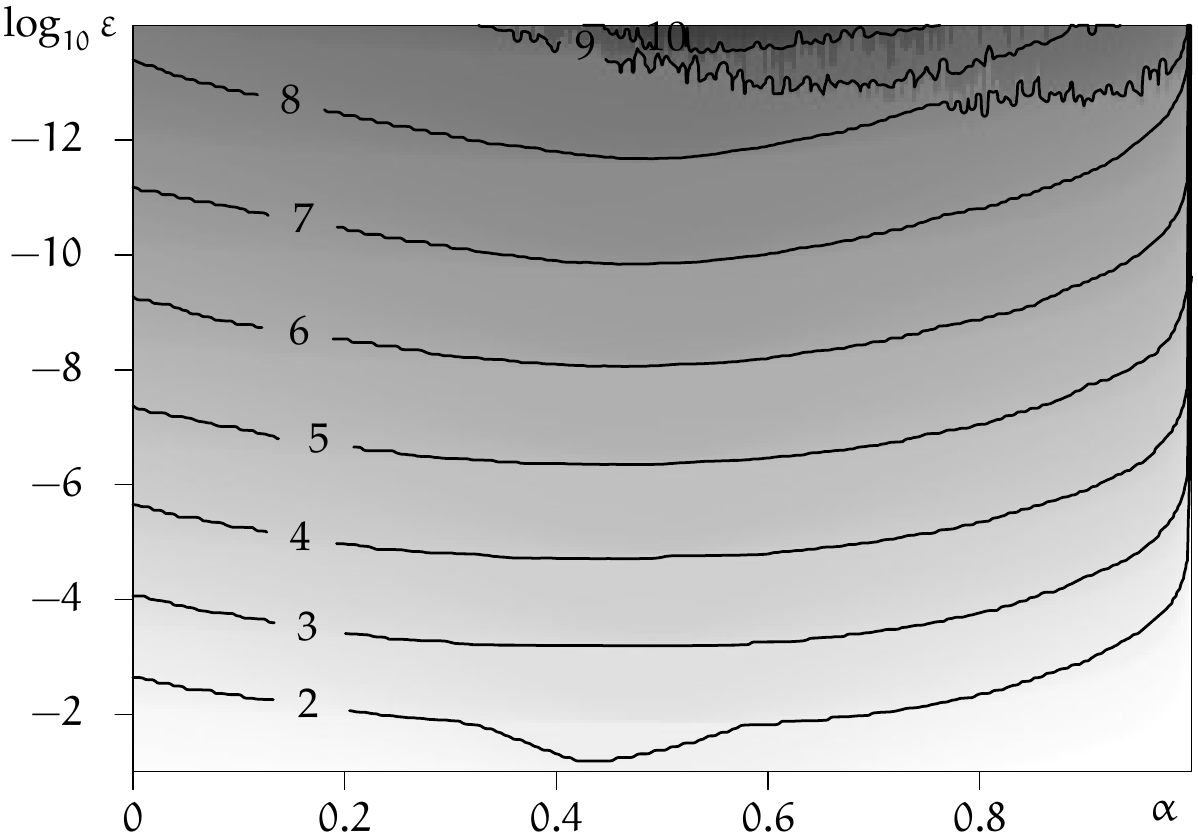} \hfil
 \end{center}
 \begin{center} \hfil
  \includegraphics[width=.48\textwidth]{./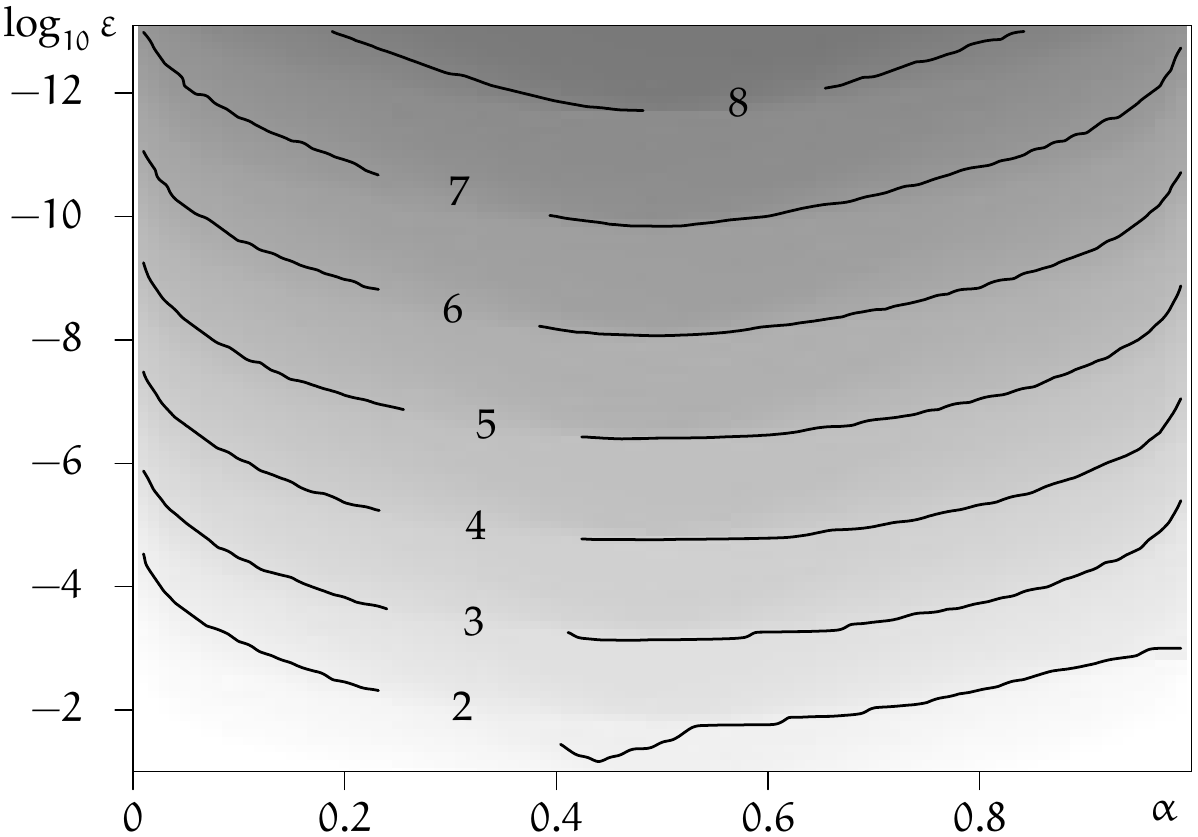} \hfil
  \includegraphics[width=.48\textwidth]{./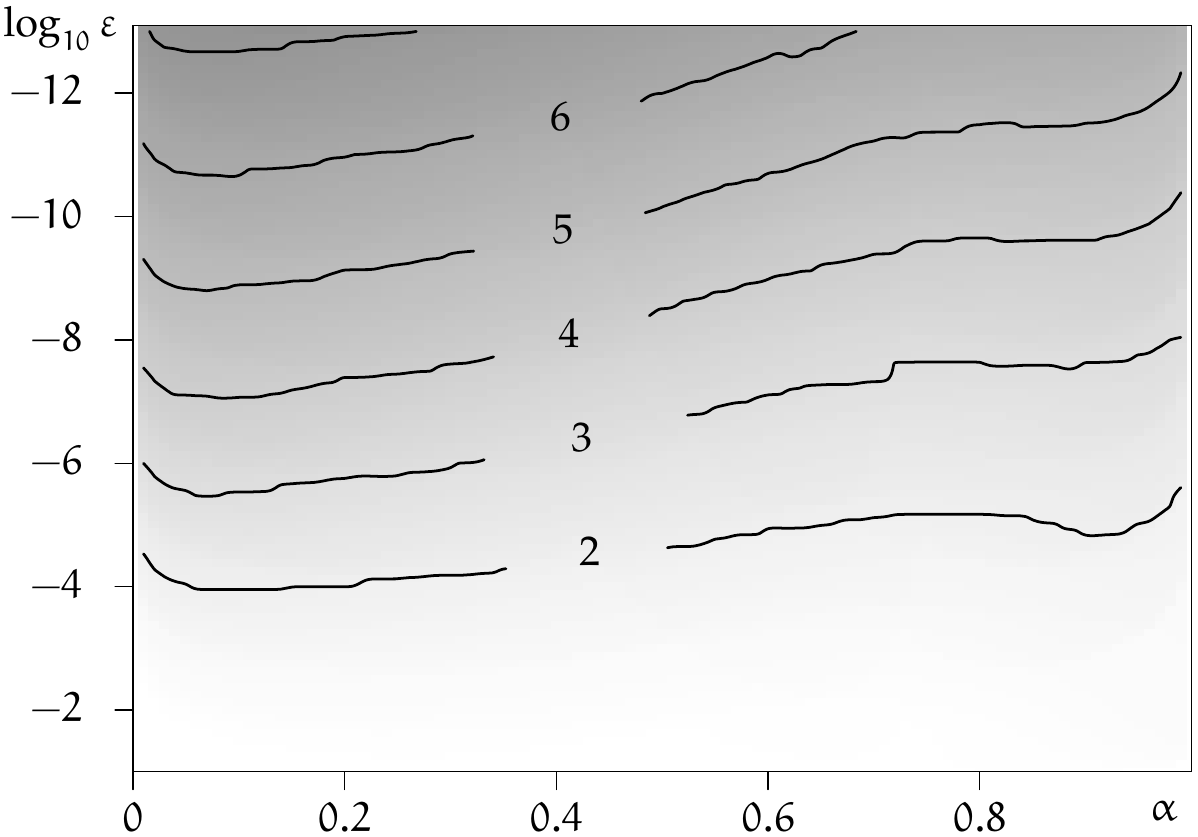} \hfil
 \end{center}
 \caption{Effective QTT rank of vector $[k^{\alpha-1}]$ (top left), vector $[(k-1)^{\alpha+1}-2k^{\alpha+1}+(k+1)^{\alpha+1}]$ (top right), first column of matrix~\eqref{eq3} (bottom left) and its inverse (bottom right) w.r.t. parameter $\alpha$ and relative approximation accuracy  $\eps$ in Frobenius norm. Problem size $n=2^{28},$ maximum time $T=10,$ mass $m=-10^{+6}$} \label{fig:r}
\end{figure}

The QTT separation function of the function $x^{\alpha-1}$ discretized on a uniform grid, is particularly important for us, because it motivates the use of the QTT approximation to develop the superfast  algorithms for the solution of fractional differential equations.
In the numerical experiment we found out that the QTT ranks are very moderate for all $0 < \alpha < 1$ and for accuracy up to the machine threshold. 
The same holds for the first column of matrix~\eqref{eq3} as well as for its inverse.
On Fig.~\ref{fig:r} we show the effective (average) QTT rank w.r.t. $\eps$ and $\alpha.$ 
We note that the effective rank does not overcome $10,$ even for very large grids up to $n=2^{28}.$

To construct a superfast algorithms in the QTT format we first have to compress the data to this format using the algorithm with the sublinear complexity. 
The original TT--SVD algorithm proposed in~\cite{osel-tt-2011} requires all elements of tensor and therefore does not suit for this purpose.
To compress matrix~\eqref{eq3} to QTT format, we apply cross interpolation algorithm TT--ACA proposed in~\cite{so-dmrgi-2011proc}.
This method computes the approximation using only a few elements of the original array, and does not require all elements to be computed.
The comparison of runtimes of TT--SVD and TT--ACA algorthms is given on Fig.~\ref{fig:dmrg}.
The time that is required to choose the good subset of elements for the interpolation depends on the structure of data, which is defined by parameters $\alpha$ and $m.$ 
It is easy to see that the behaviour of data is less regular for the large $\alpha$ and mass, which leads to larger runtimes of TT--ACA.
Nevertheless, we clearly see that TT--ACA outperforms TT--SVD for all examples and has sublinear complexity w.r.t. problem size $n.$

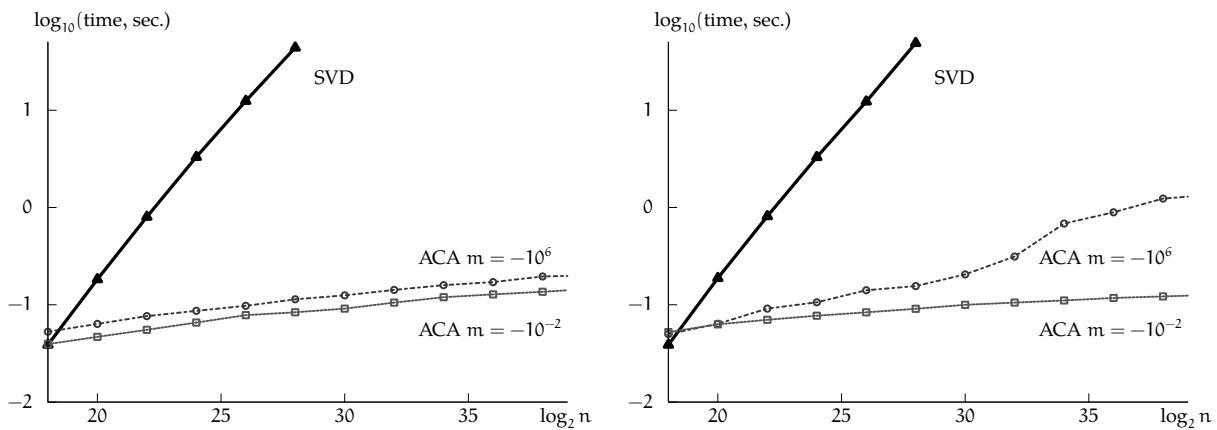
\begin{figure}[t]
 \begin{center} \hfil
  \resizebox{.48\textwidth}{!}{\begin{tikzpicture}[gnuplot]
\gpcolor{gp lt color border}
\gpsetlinetype{gp lt border}
\gpsetlinewidth{1.00}
\draw[gp path] (0.828,0.616)--(1.008,0.616);
\node[gp node right] at (0.644,0.616) {$-2$};
\draw[gp path] (0.828,2.715)--(1.008,2.715);
\node[gp node right] at (0.644,2.715) {$-1$};
\draw[gp path] (0.828,4.813)--(1.008,4.813);
\node[gp node right] at (0.644,4.813) {$0$};
\draw[gp path] (0.828,6.912)--(1.008,6.912);
\node[gp node right] at (0.644,6.912) {$1$};
\draw[gp path] (1.887,0.616)--(1.887,0.796);
\node[gp node center] at (1.887,0.308) {$20$};
\draw[gp path] (4.534,0.616)--(4.534,0.796);
\node[gp node center] at (4.534,0.308) {$25$};
\draw[gp path] (7.182,0.616)--(7.182,0.796);
\node[gp node center] at (7.182,0.308) {$30$};
\draw[gp path] (9.829,0.616)--(9.829,0.796);
\node[gp node center] at (9.829,0.308) {$35$};
\draw[gp path] (0.828,8.381)--(0.828,0.616)--(11.947,0.616);
\node[gp node left] at (0.383,8.847) {$\log_{10}(\mathrm{time},$ sec.)};
\node[gp node right] at (12.614,0.305) {$\log_2n$};
\node[gp node left] at (6.388,7.605) {SVD};
\node[gp node left] at (8.611,3.722) {ACA $m=-10^6$};
\node[gp node left] at (8.611,2.169) {ACA $m=-10^{-2}$};
\gpcolor{\gprgb{0}{0}{0}}
\gpsetlinetype{gp lt plot 0}
\gpsetlinewidth{5.00}
\draw[gp path] (0.828,1.852)--(1.887,3.264)--(2.946,4.611)--(4.005,5.903)--(5.064,7.112)%
  --(6.123,8.261);
\gpsetpointsize{7.20}
\gppoint{gp mark 9}{(0.828,1.852)}
\gppoint{gp mark 9}{(1.887,3.264)}
\gppoint{gp mark 9}{(2.946,4.611)}
\gppoint{gp mark 9}{(4.005,5.903)}
\gppoint{gp mark 9}{(5.064,7.112)}
\gppoint{gp mark 9}{(6.123,8.261)}
\gpcolor{\gprgb{215}{215}{215}}
\gpsetlinetype{gp lt plot 5}
\gpsetlinewidth{3.00}
\draw[gp path] (0.828,2.134)--(1.887,2.302)--(2.946,2.470)--(4.005,2.584)--(5.064,2.692)%
  --(6.123,2.833)--(7.182,2.918)--(8.241,3.036)--(9.300,3.138)--(10.359,3.204)--(11.418,3.326)%
  --(11.947,3.337);
\gpsetpointsize{4.80}
\gppoint{gp mark 6}{(0.828,2.134)}
\gppoint{gp mark 6}{(1.887,2.302)}
\gppoint{gp mark 6}{(2.946,2.470)}
\gppoint{gp mark 6}{(4.005,2.584)}
\gppoint{gp mark 6}{(5.064,2.692)}
\gppoint{gp mark 6}{(6.123,2.833)}
\gppoint{gp mark 6}{(7.182,2.918)}
\gppoint{gp mark 6}{(8.241,3.036)}
\gppoint{gp mark 6}{(9.300,3.138)}
\gppoint{gp mark 6}{(10.359,3.204)}
\gppoint{gp mark 6}{(11.418,3.326)}
\gpcolor{\gprgb{342}{342}{342}}
\gpsetlinetype{gp lt plot 3}
\draw[gp path] (0.828,1.866)--(1.887,2.023)--(2.946,2.175)--(4.005,2.331)--(5.064,2.491)%
  --(6.123,2.553)--(7.182,2.632)--(8.241,2.762)--(9.300,2.878)--(10.359,2.939)--(11.418,2.995)%
  --(11.947,3.028);
\gppoint{gp mark 4}{(0.828,1.866)}
\gppoint{gp mark 4}{(1.887,2.023)}
\gppoint{gp mark 4}{(2.946,2.175)}
\gppoint{gp mark 4}{(4.005,2.331)}
\gppoint{gp mark 4}{(5.064,2.491)}
\gppoint{gp mark 4}{(6.123,2.553)}
\gppoint{gp mark 4}{(7.182,2.632)}
\gppoint{gp mark 4}{(8.241,2.762)}
\gppoint{gp mark 4}{(9.300,2.878)}
\gppoint{gp mark 4}{(10.359,2.939)}
\gppoint{gp mark 4}{(11.418,2.995)}
\gpcolor{gp lt color border}
\gpsetlinetype{gp lt border}
\gpsetlinewidth{1.00}
\draw[gp path] (0.828,8.381)--(0.828,0.616)--(11.947,0.616);
\gpdefrectangularnode{gp plot 1}{\pgfpoint{0.828cm}{0.616cm}}{\pgfpoint{11.947cm}{8.381cm}}
\end{tikzpicture}
  \resizebox{.48\textwidth}{!}{\begin{tikzpicture}[gnuplot]
\gpcolor{gp lt color border}
\gpsetlinetype{gp lt border}
\gpsetlinewidth{1.00}
\draw[gp path] (0.828,0.616)--(1.008,0.616);
\node[gp node right] at (0.644,0.616) {$-2$};
\draw[gp path] (0.828,2.715)--(1.008,2.715);
\node[gp node right] at (0.644,2.715) {$-1$};
\draw[gp path] (0.828,4.813)--(1.008,4.813);
\node[gp node right] at (0.644,4.813) {$0$};
\draw[gp path] (0.828,6.912)--(1.008,6.912);
\node[gp node right] at (0.644,6.912) {$1$};
\draw[gp path] (1.887,0.616)--(1.887,0.796);
\node[gp node center] at (1.887,0.308) {$20$};
\draw[gp path] (4.534,0.616)--(4.534,0.796);
\node[gp node center] at (4.534,0.308) {$25$};
\draw[gp path] (7.182,0.616)--(7.182,0.796);
\node[gp node center] at (7.182,0.308) {$30$};
\draw[gp path] (9.829,0.616)--(9.829,0.796);
\node[gp node center] at (9.829,0.308) {$35$};
\draw[gp path] (0.828,8.381)--(0.828,0.616)--(11.947,0.616);
\node[gp node left] at (0.383,8.847) {$\log_{10}(\mathrm{time},$ sec.)};
\node[gp node right] at (12.614,0.305) {$\log_2n$};
\node[gp node left] at (6.388,7.605) {SVD};
\node[gp node left] at (8.611,3.722) {ACA $m=-10^6$};
\node[gp node left] at (8.611,2.169) {ACA $m=-10^{-2}$};
\gpcolor{\gprgb{0}{0}{0}}
\gpsetlinetype{gp lt plot 0}
\gpsetlinewidth{5.00}
\draw[gp path] (0.828,1.851)--(1.887,3.290)--(2.946,4.624)--(4.005,5.900)--(5.064,7.098)%
  --(6.123,8.360);
\gpsetpointsize{7.20}
\gppoint{gp mark 9}{(0.828,1.851)}
\gppoint{gp mark 9}{(1.887,3.290)}
\gppoint{gp mark 9}{(2.946,4.624)}
\gppoint{gp mark 9}{(4.005,5.900)}
\gppoint{gp mark 9}{(5.064,7.098)}
\gppoint{gp mark 9}{(6.123,8.360)}
\gpcolor{\gprgb{215}{215}{215}}
\gpsetlinetype{gp lt plot 5}
\gpsetlinewidth{3.00}
\draw[gp path] (0.828,2.082)--(1.887,2.305)--(2.946,2.632)--(4.005,2.768)--(5.064,3.028)%
  --(6.123,3.118)--(7.182,3.370)--(8.241,3.756)--(9.300,4.467)--(10.359,4.710)--(11.418,5.006)%
  --(11.947,5.047);
\gpsetpointsize{4.80}
\gppoint{gp mark 6}{(0.828,2.082)}
\gppoint{gp mark 6}{(1.887,2.305)}
\gppoint{gp mark 6}{(2.946,2.632)}
\gppoint{gp mark 6}{(4.005,2.768)}
\gppoint{gp mark 6}{(5.064,3.028)}
\gppoint{gp mark 6}{(6.123,3.118)}
\gppoint{gp mark 6}{(7.182,3.370)}
\gppoint{gp mark 6}{(8.241,3.756)}
\gppoint{gp mark 6}{(9.300,4.467)}
\gppoint{gp mark 6}{(10.359,4.710)}
\gppoint{gp mark 6}{(11.418,5.006)}
\gpcolor{\gprgb{342}{342}{342}}
\gpsetlinetype{gp lt plot 3}
\draw[gp path] (0.828,2.125)--(1.887,2.293)--(2.946,2.391)--(4.005,2.479)--(5.064,2.552)%
  --(6.123,2.628)--(7.182,2.714)--(8.241,2.762)--(9.300,2.808)--(10.359,2.862)--(11.418,2.895)%
  --(11.947,2.912);
\gppoint{gp mark 4}{(0.828,2.125)}
\gppoint{gp mark 4}{(1.887,2.293)}
\gppoint{gp mark 4}{(2.946,2.391)}
\gppoint{gp mark 4}{(4.005,2.479)}
\gppoint{gp mark 4}{(5.064,2.552)}
\gppoint{gp mark 4}{(6.123,2.628)}
\gppoint{gp mark 4}{(7.182,2.714)}
\gppoint{gp mark 4}{(8.241,2.762)}
\gppoint{gp mark 4}{(9.300,2.808)}
\gppoint{gp mark 4}{(10.359,2.862)}
\gppoint{gp mark 4}{(11.418,2.895)}
\gpcolor{gp lt color border}
\gpsetlinetype{gp lt border}
\gpsetlinewidth{1.00}
\draw[gp path] (0.828,8.381)--(0.828,0.616)--(11.947,0.616);
\gpdefrectangularnode{gp plot 1}{\pgfpoint{0.828cm}{0.616cm}}{\pgfpoint{11.947cm}{8.381cm}}
\end{tikzpicture}
 \end{center}
\caption{Runtimes of TT--SVD and TT--ACA algorithms for the approximation of matrix~\eqref{eq3} in the QTT format w.r.t. size $n.$ 
 (left) $\alpha=0.1,$ (right) $\alpha=0.9.$}
\label{fig:dmrg} 
\end{figure}

\subsection{Fourier transform and convolution in QTT format} \label{QTTconv}
Inversion algorithms for triangular Toeplitz matrices $A \in \T_n$ recalled in Sec.~\ref{TTOEP} are based on two main operations: Fourier transform and discrete convolution.
The radix-2 reccurent relation which was known to Gauss~\cite{gauss-fft} and lays behind the famous Cooley-Tuckey FFT algorithm~\cite{cooleytukey-fft} perfectly matches the multilevel structure of QTT format, resulting in the QTT--FFT algorithm~\cite{dks-ttfft-2012}.
For a vector of size $n=2^d$ given approximately in QTT format~\eqref{eq:qtt}, the QTT--FFT computes the Fourier transform  with complexity~$\O(d^2 R^3),$ where $R$ is the maximum QTT rank of the input vector, Fourier image, and all intermediate vectors of the algorithm.

The discrete convolution, i.e., multiplication by Toeplitz matrix, can be performed by three Fourier transforms with complexity $\O(d^2 R^3),$ where $R$ bounds the QTT ranks of both vectors to convolve as well as their Fourier images.
As shown in~\cite{khkaz-conv-2011}, the convolution $c=a \conv b$ of two vectors with QTT ranks bounded by $r_a$ and $r_b,$ can be written in QTT form with QTT ranks bounded by $2r_ar_b.$
This representation has large QTT ranks, which can be reduced to the value bounded by $r_c \leq 2r_ar_b$ using some TT--truncation algorithm.
 We can use SVD--based algorithm proposed in~\cite{osel-tt-2011} or iterative DMRG--type approach proposed in~\cite{Os-mvk2-2011}, resulting in convolution algorithms with $\O(d r_a^3r_b^3)$ and $\O(d (r_a+r_b+r_c) r_a r_b r_c)$ complexity, respectively.
If $r_a \approx r_b \approx r_c \approx R,$ the QTT--FFT and DMRG--based convolution algorithms have complexity $\O(d^2 R^3)$ and $\O(d R^4),$ respectively. 
Therefore, we can not say in general which approach is better, even in the simplest case of almost equal QTT ranks. 
This will be established in numerical experiments.

\subsection{Shifts of vectors in QTT format}
The convolution algorithm proposed in~\cite{khkaz-conv-2011} is based on the remarkable property of shift matrices $L \in \T_{2^d}$ and $U = L^\t,$ where the first column of $L$ is $l=\left(0,1,0,0,\ldots,0\right)^\t.$
It is shown in~\cite{khkaz-conv-2011} that all matrices $L^p,$ $p=1,\ldots,2^d-1$ have all QTT ranks two. 
Hence, if a vector $a$ has QTT ranks $r_1,\ldots,r_{d-1},$ then the right shifted vector $b=L^p a$ for all $p$ has QTT ranks not larger than $2r_1, \ldots, 2r_{d-1}.$
The same holds for left shifts $c=U^p a.$

In the following theorem we improve this result for vectors, shifted by one element.

\begin{theorem}
 Let $a=\left[a(k)\right]_{k=0}^{2^d-1}$ has the QTT representation~\eqref{eq:qtt}, then the vector 
\begin{equation}\nonumber
 b=\begin{bmatrix} x & a(0) & \ldots & a(2^d-2)\end{bmatrix}^\t
\end{equation}
has the QTT representation $b(k) = b(\overline{k_1 \ldots k_d}) = B^{(1)}_{k_1} \ldots B^{(d)}_{k_d}$ with the following cores
\begin{equation}\label{eq:push}
 \begin{split}
  B^{(1)}_0 = \begin{bmatrix} \phantom{A_0^{(1)}} & 1 \end{bmatrix}, \qquad
  B^{(p)}_0 = \begin{bmatrix} A_0^{(p)} &  \\ \phantom{b_p} & 1 \end{bmatrix}, \qquad
  B^{(d)}_0 = \begin{bmatrix} A_0^{(d)} \\  x \end{bmatrix}, \\
  B^{(1)}_1 = \begin{bmatrix} A_0^{(1)} & \phantom{1} \end{bmatrix}, \qquad
  B^{(p)}_1 = \begin{bmatrix} A_1^{(p)} &  \\ b_p & \phantom{1} \end{bmatrix}, \qquad
  B^{(d)}_1 = \begin{bmatrix} A_1^{(d)} \\ b_d \end{bmatrix},
 \end{split}
\end{equation}
where $p=2,\ldots,d-1$ and  $b_q = A^{(1)}_1 \ldots A^{(q-1)}_1 A^{(q)}_0$ for $q=2,\ldots,d.$ 
Similarly, the vector 
\begin{equation}\nonumber
 c=\begin{bmatrix} a(1) & \ldots & a(2^d-1) & y \end{bmatrix}^\t
\end{equation}
has the QTT representation $c(k) = c(\overline{k_1 \ldots k_d}) = C^{(1)}_{k_1} \ldots C^{(d)}_{k_d}$ with the following cores
\begin{equation}\label{eq:pull}
 \begin{split}
  C^{(1)}_0 = \begin{bmatrix} A_1^{(1)} & \phantom{1} \end{bmatrix}, \qquad
  C^{(p)}_0 = \begin{bmatrix} A_0^{(p)} &  \\ c_p & \phantom{1} \end{bmatrix}, \qquad
  C^{(d)}_0 = \begin{bmatrix} A_0^{(d)} \\ c_d  \end{bmatrix}, \\
  C^{(1)}_1 = \begin{bmatrix} \phantom{A_0^{(1)}} & 1 \end{bmatrix}, \qquad
  C^{(p)}_1 = \begin{bmatrix} A_1^{(p)} &  \\ & 1 \end{bmatrix}, \qquad
  C^{(d)}_1 = \begin{bmatrix} A_1^{(d)} \\ y \end{bmatrix},
 \end{split}
\end{equation}
where $p=2,\ldots,d-1$ and  $c_q = A^{(1)}_0 \ldots A^{(q-1)}_0 A^{(q)}_1$ for $q=2,\ldots,d.$ 
\end{theorem}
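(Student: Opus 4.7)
The plan is to verify the claimed core representations by direct expansion of the QTT matrix product, case-distinguished by the binary expansion of $k$. The key combinatorial fact is how increment and decrement act on the little-endian binary digits of $k=\overline{k_1\ldots k_d}$ from~\eqref{eq:bit}. For the decrement, one locates the smallest index $q$ with $k_q=1$; then $k-1$ is obtained from $k$ by flipping $k_q$ to $0$ and flipping all preceding $0$-bits to $1$, i.e., $k-1=\overline{1\ldots 1\,0\,k_{q+1}\ldots k_d}$. Consequently, for $k\neq 0$,
\[
 a(k-1) = A^{(1)}_1 \cdots A^{(q-1)}_1 \, A^{(q)}_0 \, A^{(q+1)}_{k_{q+1}} \cdots A^{(d)}_{k_d}
 = b_q \, A^{(q+1)}_{k_{q+1}} \cdots A^{(d)}_{k_d},
\]
with the transition row $b_q$ exactly as defined in the statement, while $b(0)=x$ is handled as a separate boundary case.

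First I would note that each bulk core $B^{(p)}_{k_p}$ in~\eqref{eq:push} has $(r_{p-1}{+}1)\x(r_p{+}1)$ block structure: its top-left block carries $A^{(p)}_{k_p}$; the bottom-right slot is the scalar $1$ when $k_p=0$ and vanishes (together with the whole rightmost column) when $k_p=1$; the bottom-left block contains $b_p$ only when $k_p=1$. The boundary cores $B^{(1)}$ and $B^{(d)}$ collapse one side of this augmentation, so that the outer QTT ranks remain $\tilde r_0=\tilde r_d=1$. With this picture in place, evaluating the full product $B^{(1)}_{k_1}\cdots B^{(d)}_{k_d}$ reduces to a mechanical propagation of row vectors of the type $\begin{bmatrix} * & 0 \end{bmatrix}$ or $\begin{bmatrix} 0 & 1 \end{bmatrix}$ through block-triangular cores.

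Next I would carry out a short case analysis driven by the transition position $q$. For $k=0$, the cores $B^{(p)}_0$ with $p\ge 2$ all preserve the row vector $\begin{bmatrix}0 & 1\end{bmatrix}$ coming from $B^{(1)}_0$ via the bottom-right $1$-entry, and $B^{(d)}_0$ projects this onto the scalar $x$, giving $b(0)=x$. For $k\neq 0$ with transition position $q\ge 2$, the leading factors $B^{(1)}_0\cdots B^{(q-1)}_0$ again keep the row as $\begin{bmatrix}0 & 1\end{bmatrix}$; multiplication by $B^{(q)}_1$ commits to $\begin{bmatrix} b_q & 0 \end{bmatrix}$; and each subsequent core $B^{(p)}_{k_p}$ then acts only through its top-left block $A^{(p)}_{k_p}$, because the trailing $0$ annihilates the bottom-right contribution. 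The final product equals $b_q A^{(q+1)}_{k_{q+1}}\cdots A^{(d)}_{k_d}=a(k-1)$. The case $q=1$ is absorbed by $B^{(1)}_1=\begin{bmatrix} A^{(1)}_0 & 0 \end{bmatrix}$, which is simply the degenerate version of the same pattern with an empty leading product.

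The formula~\eqref{eq:pull} for the left-shifted vector $c$ follows by the mirror argument with the roles of the symbols $0$ and $1$ exchanged, since $k\mapsto k+1$ flips the first $0$-bit to $1$ and all preceding $1$-bits to $0$; this is exactly what the definition $c_q = A^{(1)}_0\cdots A^{(q-1)}_0 A^{(q)}_1$ encodes. The main obstacle is not in any single calculation but in the bookkeeping: one must check that the boundary cores correctly absorb the free values $x$ and $y$ while simultaneously reducing the outer QTT rank to $1$, so that $b$ and $c$ are genuine scalar-valued vectors of length $2^d$ rather than matrix-valued tensors.
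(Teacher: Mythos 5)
Your proposal is correct and follows essentially the same route as the paper's own proof: a direct elementwise verification of the core product, organized by the position $q$ of the lowest set bit of $k$, with the carry identity $k-1=\overline{1\ldots1\,0\,k_{q+1}\ldots k_d}$ supplying exactly the transition row $b_q=A^{(1)}_1\cdots A^{(q-1)}_1A^{(q)}_0$, and the `pull' case obtained by exchanging the roles of the bits $0$ and $1$. Your block-propagation description of how the row vector $\begin{bmatrix}0 & 1\end{bmatrix}$ survives the leading cores and then commits to $\begin{bmatrix}b_q & 0\end{bmatrix}$ is just a slightly more systematic phrasing of the paper's case-by-case computation.
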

\begin{proof}
We check~\eqref{eq:push} straightforwardly. For $k=0$ it holds
\begin{equation}\nonumber
b(0) = B^{(1)}_{0} \ldots B^{(d)}_{0} = 
       \begin{bmatrix} \phantom{A_0^{(1)}} & 1 \end{bmatrix} \: \ldots \:
       \begin{bmatrix} A_0^{(p)} &  \\ \phantom{b_p} & 1 \end{bmatrix} \: \ldots \:
       \begin{bmatrix} A_0^{(d)} \\  x \end{bmatrix} = x
\end{equation}
For $k=\overline{k_1k_2\ldots k_d}$ with $k_1=1$ it holds
\begin{equation}\nonumber
 \begin{split}
  b(k) & = b(\overline{1 k_2\ldots k_d}) = B^{(1)}_{1} B^{(2)}_{k_2} \ldots B^{(d)}_{k_d} = 
       \begin{bmatrix} A_0^{(1)} & \phantom{1} \end{bmatrix} \: 
       \begin{bmatrix} A_{k_2}^{(2)} &  \\ * & * \end{bmatrix} \: \ldots \:
       \begin{bmatrix} A_{k_d}^{(d)} \\  * \end{bmatrix} 
       \\ & = A_0^{(1)} A_{k_2}^{(2)} \ldots A_{k_d}^{(d)} = a(\overline{0 k_2 \ldots k_d}) = a(k-1),
 \end{split}       
\end{equation}
where ``$*$'' denotes arbitrarily zero or non-zero element.
For $k=\overline{k_1k_2k_3\ldots k_d}$ with $k_1=0$ and $k_2=1$ it holds
\begin{equation}\nonumber
 \begin{split}
  b(k) & = b(\overline{01 k_3\ldots k_d}) =  B^{(1)}_{0} B^{(2)}_{1} B^{(3)}_{k_3} \ldots B^{(d)}_{k_d} = 
       \begin{bmatrix} \phantom{A_0^{(1)}} & {1} \end{bmatrix} \: 
       \begin{bmatrix} A_1^{(2)} &  \\ {b_2} & \phantom{1} \end{bmatrix} \: 
       \begin{bmatrix} A_{k_3}^{(3)} &  \\ * & * \end{bmatrix} \: \ldots \:
       \begin{bmatrix} A_{k_d}^{(d)} \\  * \end{bmatrix} \\
       & = b_2 A_{k_3}^{(3)} \ldots A_{k_d}^{(d)} = A_1^{(1)} A_0^{(2)} A_{k_3}^{(3)} \ldots A_{k_d}^{(d)} = a(\overline{10 k_3\ldots k_d}) = a(k-1).
 \end{split}
\end{equation}
Finally, for $k=\overline{k_1k_2k_3\ldots k_d}$ with $k_1=\ldots=k_{p-1}=0$ and $k_p=1$ it holds
\begin{equation}\nonumber
 \begin{split}
  b(k) & = b(\overline{\underbrace{0 \ldots 0}_{p-1\:\textrm{zeros}}1 k_{p+1}\ldots k_d}) = B^{(1)}_{0}B^{(2)}_{0}  \ldots B^{(p-1)}_{0} B^{(p)}_{1} B^{(p+1)}_{k_{p+1}} \ldots B^{(d)}_{k_d} 
       \\ & = 
       \begin{bmatrix} \phantom{A_0^{(1)}} & {1} \end{bmatrix} \: 
       \begin{bmatrix} A_0^{(2)} &  \\ \phantom{b_2} & {1} \end{bmatrix} \: \ldots
       \begin{bmatrix} A_0^{(p-1)} &  \\ \phantom{b_2} & {1} \end{bmatrix} \: 
       \begin{bmatrix} A_{1}^{(p)} &  \\ b_p &  \phantom{1} \end{bmatrix} \: 
       \begin{bmatrix} A_{k_{p+1}}^{(p+1)} &  \\ * & * \end{bmatrix} \: \ldots \: \ldots
       \begin{bmatrix} A_{k_d}^{(d)} \\  * \end{bmatrix}
       \\ & = b_p A_{k_{p+1}}^{(p+1)} \ldots A_{k_d}^{(d)} = A_1^{(1)} \ldots A_1^{(p-1)} A_0^{(p)} A_{k_{p+1}}^{(p+1)} \ldots A_{k_d}^{(d)} 
       \\ & = a(\overline{\underbrace{1 \ldots 1}_{p-1\:\textrm{ones}}0 k_{p+1}\ldots k_d})
            = a(k-1).
 \end{split}
\end{equation}
Equation~\eqref{eq:pull} is verified in the same way.
\end{proof}

\subsection{Divide and conquer algorithm in QTT format}

We are now ready to present the version of divide and conquer algorithm which operates with data given approximately in QTT format.
Let $n=2^d$ and consider $A\in\T_n$ defined by the first column $a(k)$ which is represented in the QTT format~\eqref{eq:qtt}.
As previously, let $A_t$ denote $2^t \times 2^t$ leading submatrix of $A.$
For small $d_0$ we can invert $A_{d_0}$ using standard divide and conquer method and approximate the first column of $A_{d_0}^{-1}$ in QTT format using SVD-based algorithm~\cite{osel-tt-2011}.

Now suppose that for some $t$ the first column of $A_t^{-1}$ is computed in QTT format, and we have to compute the QTT approximation of the first column of $A_{t+1}^{-1},$ using the recursion~\eqref{eq:dc}.
It is necessary to describe the Toeplitz matrix $C$ which lies in the lower part of $A_{t+1}.$
The first column of $C$ is $c_+ = [a(2^t), a(2^t+1), \ldots, a(2^{t+1}-1)]^\t$ and has the following QTT representation
\begin{equation}\label{eq:col}
c_+(\overline{k_1 \ldots k_t}) = a(\overline{k_1 \ldots k_t} + 2^t) = A^{(1)}_{k_1} A^{(2)}_{k_2} \ldots A^{(t)}_{k_t} A^{(t+1)}_1 A^{(t+2)}_0 \ldots A^{(d)}_0.
\end{equation}
The first row of $C$ is $c_- = [a(2^t), a(2^t-1), \ldots, a(1)]^\t.$
To construct the QTT representation for $c_-,$ first write the QTT representation for $a=[a(0), \ldots, a(2^t-1)]^\t,$ which is
$$
a(\overline{k_1 \ldots k_t}) = A^{(1)}_{k_1} A^{(2)}_{k_2} \ldots A^{(t)}_{k_t} A^{(t+1)}_0 A^{(t+2)}_0 \ldots A^{(d)}_0.
$$
Then apply the `pull' operation and construct QTT format for $a'=[a(1), \ldots, a(2^t)]^\t$ as follows
$$
a'(\overline{k_1 \ldots k_t}) = C^{(1)}_{k_1} C^{(2)}_{k_2} \ldots C^{(t)}_{k_t} A^{(t+1)}_0 A^{(t+2)}_0 \ldots A^{(d)}_0,
$$
where TT--cores $C_{k_q}^{(q)}$ are defined by~\eqref{eq:pull}. 
Finally, revert the ordering of elements in the vector $a'$ to obtain the QTT format for $c_-$ as follows
\begin{equation}\label{eq:row}
c_-(\overline{k_1 \ldots k_t}) = C^{(1)}_{1-k_1} C^{(2)}_{1-k_2} \ldots C^{(t)}_{1-k_t} A^{(t+1)}_0 A^{(t+2)}_0 \ldots A^{(d)}_0.
\end{equation}

We summarize the above steps in Alg.~\ref{alg:dc}. 
Note that the workhorse of divide and conquer method is the discrete convolution in QTT format, which can be performed by two different methods.
This results in two variants of algorithms with different performance, which will be studied in numerical experiments.

\begin{algorithm}[t]
 \caption{Divide and conquer in QTT format} \label{alg:dc}
 \begin{algorithmic}[1]
  \REQUIRE{$A\in\T_n,$ $n=2^d,$ given by vector $[a(k)]_{k=0}^{n-1}$ in QTT format~\eqref{eq:qtt}}
  \ENSURE{$B=A^{-1}\in\T_n$ given in QTT format}
  \STATE For small $d_0,$ compute the first column of $2^{d_0}\times 2^{d_0}$ leading submatrix $A_{d_0}.$
  Compute  $B_{d_0}=A_{d_0}^{-1}$ by~\eqref{eq:inv} and approximate in in QTT format by TT--SVD algorithm~\cite{osel-tt-2011}.
  \FOR{$t=d_0,\ldots,d-1$}
   \STATE Compute the first row and column of matrix $C_t$ in~\eqref{eq:dc} in QTT format by~\eqref{eq:col} and~\eqref{eq:row}.
   \STATE Compute the first column of $B_t C_t B_t$ by two convolutions in QTT format, see Sec.~\ref{QTTconv}.
   \STATE Combine the first column of $B_t$ and first column of $B_t C_t B_t$ given in QTT format as follows
   \begin{equation}\nonumber
    b(\overline{k_1\ldots k_t}) = B^{(1)}_{k_1} \ldots B^{(t)}_{k_1}, \qquad 
    g(\overline{k_1\ldots k_t}) = G^{(1)}_{k_1} \ldots G^{(t)}_{k_1},
   \end{equation}
   to the single vector $b'$ in QTT format, which is defined as follows
   \begin{equation}\nonumber
    b'(\overline{k_1\ldots k_t k_{t+1}}) = 
     \begin{bmatrix} B^{(1)}_{k_1} & G^{(1)}_{k_1}\end{bmatrix} \:
     \begin{bmatrix} B^{(2)}_{k_2} & \\ & G^{(2)}_{k_2}\end{bmatrix} \: 
     \ldots \:
     \begin{bmatrix} B^{(t)}_{k_t} & \\ & G^{(t)}_{k_t}\end{bmatrix} \: 
     \begin{bmatrix} 1-k_{t+1} \\  k_{t+1}\end{bmatrix} \: 
   \end{equation}
   \STATE Apply TT--truncate algorithm to $b'$ to reduce the ranks of QTT representation.
  \ENDFOR
 \end{algorithmic}
\end{algorithm}

\subsection{Modified Bini's algorithm in QTT format}
The implementation of~\eqref{eq:bini} in the QTT format is very straightforward.
It is enough to mention that the QTT format of vector $[\eps^j]_{j=0}^{n-1},$ $n=2^d,$ has QTT--ranks one~(see~\cite{khor-qtt-2011}), since
$$
\eps^j = \eps^{\overline{j_1j_2\ldots j_d}} = \eps^{j_1} \eps^{2j_2} \ldots \eps^{2^{d-1}j_d}.
$$
Therefore, multiplication of a vector in QTT format by diagonal matrix $D_\eps$ requires only the appropriate scaling of TT--cores.
By Alg.~\ref{alg:bi} we present the QTT version of the modified Bini's algorithm~\cite[Alg. 2]{mng-bini-2004}.
The algorithm includes two Fourier transforms in the QTT format which can not be substituted by discrete convolution.
 Note that this algorithm contains two approximation errors:
 \begin{itemize}
  \item The first comes form original approximation of triangular Toeplitz matrix $A$ by the diagonally scaled circulant matrix $A_\eps.$
  The accuracy of this approximation is governed by parameter $\eps.$ 
  According to the numerical tests made by the authors of~\cite{mng-bini-2004}, the good choice for Bini's and modified Bini's methods are $\eps^n = 0.5 \times 10^{-8}$ and $\eps^n=10^{-5},$ respectively.
  \item The second error comes from TT--truncation algorithm applied in the QTT--FFT algorithm and on each step of Newton iteration.
  The threshold parameter of TT--truncation should be usually smaller than $\eps^n$ in order to maintain the accuracy of the result after diagonal scaling.  
 \end{itemize}

\begin{algorithm}[t]
 \caption{Modified Bini's method in QTT format} \label{alg:bi}
 \begin{algorithmic}[1]
  \REQUIRE{$A\in\T_n,$ $n=2^d,$ given by vector $[a(k)]_{k=0}^{n-1}$ in QTT format~\eqref{eq:qtt}}
  \ENSURE{$B_\eps = A^{-1}_\eps \approx A^{-1}\in\T_n$ given in QTT format}
  \STATE Choose $0< \eps < 1$ and let $\hat a(k) = \eps^k a(k)$ for $k=0,\ldots,n-1$ and $\hat a(k)=0$ for $k=n,\ldots,2n-1.$
  The QTT representation of $\hat a$ is the following
  $$
  \hat a(k) = \hat a(\overline{k_1\ldots k_d k_{d+1}}) = \hat A^{(1)}_{k_1} \ldots \hat A^{(d)}_{k_d} (1-k_{d+1}), \qquad
  \hat A^{(p)}_{k_p} = \eps^{2^{p-1} k_p} A^{(p)}_{k_p}, \quad p=1,\ldots,d.
  $$
  \STATE Apply QTT--FFT~\cite{dks-ttfft-2012} to compute the size--$2n$ Fourier transform $\lambda = \sqrt{2n} F \hat a.$
  \STATE Apply Newton iteration~\eqref{eq:nw} to compute $c=\lambda^{-1}$ in the QTT format.
  Each iteration step includes the pointwise (Hadamard) multiplication of vectors in QTT format and TT--truncation to reduce the QTT--ranks.
  \STATE  Apply QTT--FFT again to compute the size--$2n$ Fourier transform $\hat b =  F^* c  / \sqrt{2n}$ in the QTT format
  $$
  \hat b(k) = \hat b(\overline{k_1 \ldots k_d k_{d+1}}) = \hat B^{(1)}_{k_1} \ldots \hat B^{(d)}_{k_d} \hat B^{(d+1)}_{k_{d+1}}.
  $$
  \STATE The QTT representation of the first column of $B_\eps$ is the following
  $$
  b_\eps(k) = b_\eps(\overline{k_1\ldots k_d}) = B^{(1)}_{k_1} \ldots B^{(d)}_{k_d} \hat B^{(d+1)}_{0}, \qquad  
  B^{(p)}_{k_p} = \eps^{-2^{p-1} k_p}  \hat B^{(p)}_{k_p}, \quad p=1,\ldots,d.
  $$
 \end{algorithmic}
\end{algorithm}

\section{Numerical experiments} \label{NUM}
\subsection{Timings of inversion algorithms}
\begin{figure}[p]
 \begin{center} \hfil
  \resizebox{.48\textwidth}{!}{\begin{tikzpicture}[gnuplot]
\gpcolor{gp lt color border}
\gpsetlinetype{gp lt border}
\gpsetlinewidth{1.00}
\draw[gp path] (0.828,0.616)--(1.008,0.616);
\node[gp node right] at (0.644,0.616) {$-2$};
\draw[gp path] (0.828,2.185)--(1.008,2.185);
\node[gp node right] at (0.644,2.185) {$-1$};
\draw[gp path] (0.828,3.753)--(1.008,3.753);
\node[gp node right] at (0.644,3.753) {$0$};
\draw[gp path] (0.828,5.322)--(1.008,5.322);
\node[gp node right] at (0.644,5.322) {$1$};
\draw[gp path] (0.828,6.891)--(1.008,6.891);
\node[gp node right] at (0.644,6.891) {$2$};
\draw[gp path] (0.828,0.616)--(0.828,0.796);
\node[gp node center] at (0.828,0.308) {$10$};
\draw[gp path] (3.754,0.616)--(3.754,0.796);
\node[gp node center] at (3.754,0.308) {$15$};
\draw[gp path] (6.680,0.616)--(6.680,0.796);
\node[gp node center] at (6.680,0.308) {$20$};
\draw[gp path] (9.606,0.616)--(9.606,0.796);
\node[gp node center] at (9.606,0.308) {$25$};
\draw[gp path] (0.828,8.381)--(0.828,0.616)--(11.947,0.616);
\node[gp node left] at (0.939,8.303) {$\log_{10}(\mathrm{time},$ sec.)};
\node[gp node right] at (12.614,0.305) {$\log_2n$};
\gpcolor{\gprgb{0}{0}{0}}
\gpsetlinetype{gp lt plot 0}
\gpsetlinewidth{3.00}
\draw[gp path] (0.828,1.664)--(1.998,2.047)--(3.169,2.342)--(4.339,2.650)--(5.510,2.838)%
  --(6.680,3.030)--(7.851,3.174)--(9.021,3.320)--(10.191,3.436)--(11.362,3.550);
\gpsetpointsize{6.00}
\gppoint{gp mark 3}{(0.828,1.664)}
\gppoint{gp mark 3}{(1.998,2.047)}
\gppoint{gp mark 3}{(3.169,2.342)}
\gppoint{gp mark 3}{(4.339,2.650)}
\gppoint{gp mark 3}{(5.510,2.838)}
\gppoint{gp mark 3}{(6.680,3.030)}
\gppoint{gp mark 3}{(7.851,3.174)}
\gppoint{gp mark 3}{(9.021,3.320)}
\gppoint{gp mark 3}{(10.191,3.436)}
\gppoint{gp mark 3}{(11.362,3.550)}
\gpsetlinewidth{6.00}
\draw[gp path] (0.828,2.337)--(1.998,3.300)--(3.169,4.252)--(4.339,4.703)--(5.510,5.263)%
  --(6.680,5.393)--(7.851,5.329)--(9.021,5.556)--(10.191,5.474)--(11.362,5.580);
\gpsetpointsize{8.00}
\gppoint{gp mark 3}{(0.828,2.337)}
\gppoint{gp mark 3}{(1.998,3.300)}
\gppoint{gp mark 3}{(3.169,4.252)}
\gppoint{gp mark 3}{(4.339,4.703)}
\gppoint{gp mark 3}{(5.510,5.263)}
\gppoint{gp mark 3}{(6.680,5.393)}
\gppoint{gp mark 3}{(7.851,5.329)}
\gppoint{gp mark 3}{(9.021,5.556)}
\gppoint{gp mark 3}{(10.191,5.474)}
\gppoint{gp mark 3}{(11.362,5.580)}
\gpsetlinetype{gp lt plot 1}
\gpsetlinewidth{3.00}
\draw[gp path] (0.828,1.192)--(1.998,1.382)--(3.169,1.522)--(4.339,1.630)--(5.510,1.726)%
  --(6.680,1.787)--(7.851,1.851)--(9.021,1.899)--(10.191,1.933)--(11.362,1.946);
\gpsetpointsize{6.00}
\gppoint{gp mark 7}{(0.828,1.192)}
\gppoint{gp mark 7}{(1.998,1.382)}
\gppoint{gp mark 7}{(3.169,1.522)}
\gppoint{gp mark 7}{(4.339,1.630)}
\gppoint{gp mark 7}{(5.510,1.726)}
\gppoint{gp mark 7}{(6.680,1.787)}
\gppoint{gp mark 7}{(7.851,1.851)}
\gppoint{gp mark 7}{(9.021,1.899)}
\gppoint{gp mark 7}{(10.191,1.933)}
\gppoint{gp mark 7}{(11.362,1.946)}
\gpcolor{\gprgb{448}{448}{448}}
\gpsetlinetype{gp lt plot 0}
\gpsetlinewidth{7.00}
\draw[gp path] (3.967,0.616)--(4.339,0.961)--(4.924,1.533)--(5.510,2.113)--(6.095,2.773)%
  --(6.680,3.335)--(7.265,3.905)--(7.851,4.430)--(8.436,4.954)--(9.021,5.469)--(9.606,5.989)%
  --(10.191,6.496)--(10.777,7.080)--(11.362,7.531);
\gpsetlinetype{gp lt plot 1}
\gpsetlinewidth{5.00}
\draw[gp path] (3.730,0.616)--(3.754,0.636)--(4.339,0.989)--(4.924,1.696)--(5.510,2.198)%
  --(6.095,2.496)--(6.680,3.026)--(7.265,3.587)--(7.851,4.069)--(8.436,4.618)--(9.021,5.086)%
  --(9.606,5.616)--(10.191,6.208)--(10.777,6.858)--(11.362,7.202);
\gpcolor{gp lt color border}
\gpsetlinetype{gp lt border}
\gpsetlinewidth{1.00}
\draw[gp path] (0.828,8.381)--(0.828,0.616)--(11.947,0.616);
\gpdefrectangularnode{gp plot 1}{\pgfpoint{0.828cm}{0.616cm}}{\pgfpoint{11.947cm}{8.381cm}}
\end{tikzpicture}
  \resizebox{.48\textwidth}{!}{\begin{tikzpicture}[gnuplot]
\gpcolor{gp lt color border}
\gpsetlinetype{gp lt border}
\gpsetlinewidth{1.00}
\draw[gp path] (0.828,0.616)--(1.008,0.616);
\node[gp node right] at (0.644,0.616) {$-2$};
\draw[gp path] (0.828,2.185)--(1.008,2.185);
\node[gp node right] at (0.644,2.185) {$-1$};
\draw[gp path] (0.828,3.753)--(1.008,3.753);
\node[gp node right] at (0.644,3.753) {$0$};
\draw[gp path] (0.828,5.322)--(1.008,5.322);
\node[gp node right] at (0.644,5.322) {$1$};
\draw[gp path] (0.828,6.891)--(1.008,6.891);
\node[gp node right] at (0.644,6.891) {$2$};
\draw[gp path] (0.828,0.616)--(0.828,0.796);
\node[gp node center] at (0.828,0.308) {$10$};
\draw[gp path] (3.754,0.616)--(3.754,0.796);
\node[gp node center] at (3.754,0.308) {$15$};
\draw[gp path] (6.680,0.616)--(6.680,0.796);
\node[gp node center] at (6.680,0.308) {$20$};
\draw[gp path] (9.606,0.616)--(9.606,0.796);
\node[gp node center] at (9.606,0.308) {$25$};
\draw[gp path] (0.828,8.381)--(0.828,0.616)--(11.947,0.616);
\node[gp node left] at (0.939,8.303) {$\log_{10}(\mathrm{time},$ sec.)};
\node[gp node right] at (12.614,0.305) {$\log_2n$};
\gpcolor{\gprgb{0}{0}{0}}
\gpsetlinetype{gp lt plot 0}
\gpsetlinewidth{3.00}
\draw[gp path] (0.828,1.692)--(1.998,2.067)--(3.169,2.358)--(4.339,2.581)--(5.510,2.755)%
  --(6.680,2.908)--(7.851,2.975)--(9.021,3.100)--(10.191,3.229)--(11.362,3.331);
\gpsetpointsize{6.00}
\gppoint{gp mark 3}{(0.828,1.692)}
\gppoint{gp mark 3}{(1.998,2.067)}
\gppoint{gp mark 3}{(3.169,2.358)}
\gppoint{gp mark 3}{(4.339,2.581)}
\gppoint{gp mark 3}{(5.510,2.755)}
\gppoint{gp mark 3}{(6.680,2.908)}
\gppoint{gp mark 3}{(7.851,2.975)}
\gppoint{gp mark 3}{(9.021,3.100)}
\gppoint{gp mark 3}{(10.191,3.229)}
\gppoint{gp mark 3}{(11.362,3.331)}
\gpsetlinewidth{6.00}
\draw[gp path] (0.828,2.303)--(1.998,3.141)--(3.169,3.831)--(4.339,4.561)--(5.510,4.611)%
  --(6.680,4.447)--(7.851,4.618)--(9.021,4.502)--(10.191,4.443)--(11.362,4.380);
\gpsetpointsize{8.00}
\gppoint{gp mark 3}{(0.828,2.303)}
\gppoint{gp mark 3}{(1.998,3.141)}
\gppoint{gp mark 3}{(3.169,3.831)}
\gppoint{gp mark 3}{(4.339,4.561)}
\gppoint{gp mark 3}{(5.510,4.611)}
\gppoint{gp mark 3}{(6.680,4.447)}
\gppoint{gp mark 3}{(7.851,4.618)}
\gppoint{gp mark 3}{(9.021,4.502)}
\gppoint{gp mark 3}{(10.191,4.443)}
\gppoint{gp mark 3}{(11.362,4.380)}
\gpsetlinetype{gp lt plot 1}
\gpsetlinewidth{3.00}
\draw[gp path] (0.828,1.371)--(1.998,1.525)--(3.169,1.645)--(4.339,1.714)--(5.510,1.761)%
  --(6.680,1.717)--(7.851,1.700)--(9.021,1.748)--(10.191,1.756)--(11.362,1.662);
\gpsetpointsize{6.00}
\gppoint{gp mark 7}{(0.828,1.371)}
\gppoint{gp mark 7}{(1.998,1.525)}
\gppoint{gp mark 7}{(3.169,1.645)}
\gppoint{gp mark 7}{(4.339,1.714)}
\gppoint{gp mark 7}{(5.510,1.761)}
\gppoint{gp mark 7}{(6.680,1.717)}
\gppoint{gp mark 7}{(7.851,1.700)}
\gppoint{gp mark 7}{(9.021,1.748)}
\gppoint{gp mark 7}{(10.191,1.756)}
\gppoint{gp mark 7}{(11.362,1.662)}
\gpcolor{\gprgb{448}{448}{448}}
\gpsetlinetype{gp lt plot 0}
\gpsetlinewidth{7.00}
\draw[gp path] (3.967,0.616)--(4.339,0.961)--(4.924,1.533)--(5.510,2.113)--(6.095,2.773)%
  --(6.680,3.335)--(7.265,3.905)--(7.851,4.430)--(8.436,4.954)--(9.021,5.469)--(9.606,5.989)%
  --(10.191,6.496)--(10.777,7.080)--(11.362,7.531);
\gpsetlinetype{gp lt plot 1}
\gpsetlinewidth{5.00}
\draw[gp path] (3.730,0.616)--(3.754,0.636)--(4.339,0.989)--(4.924,1.696)--(5.510,2.198)%
  --(6.095,2.496)--(6.680,3.026)--(7.265,3.587)--(7.851,4.069)--(8.436,4.618)--(9.021,5.086)%
  --(9.606,5.616)--(10.191,6.208)--(10.777,6.858)--(11.362,7.202);
\gpcolor{gp lt color border}
\gpsetlinetype{gp lt border}
\gpsetlinewidth{1.00}
\draw[gp path] (0.828,8.381)--(0.828,0.616)--(11.947,0.616);
\gpdefrectangularnode{gp plot 1}{\pgfpoint{0.828cm}{0.616cm}}{\pgfpoint{11.947cm}{8.381cm}}
\end{tikzpicture}
 \end{center}
 \begin{center} \hfil
  \resizebox{.48\textwidth}{!}{\begin{tikzpicture}[gnuplot]
\gpcolor{gp lt color border}
\gpsetlinetype{gp lt border}
\gpsetlinewidth{1.00}
\draw[gp path] (0.828,0.616)--(1.008,0.616);
\node[gp node right] at (0.644,0.616) {$-2$};
\draw[gp path] (0.828,2.185)--(1.008,2.185);
\node[gp node right] at (0.644,2.185) {$-1$};
\draw[gp path] (0.828,3.753)--(1.008,3.753);
\node[gp node right] at (0.644,3.753) {$0$};
\draw[gp path] (0.828,5.322)--(1.008,5.322);
\node[gp node right] at (0.644,5.322) {$1$};
\draw[gp path] (0.828,6.891)--(1.008,6.891);
\node[gp node right] at (0.644,6.891) {$2$};
\draw[gp path] (0.828,0.616)--(0.828,0.796);
\node[gp node center] at (0.828,0.308) {$10$};
\draw[gp path] (3.754,0.616)--(3.754,0.796);
\node[gp node center] at (3.754,0.308) {$15$};
\draw[gp path] (6.680,0.616)--(6.680,0.796);
\node[gp node center] at (6.680,0.308) {$20$};
\draw[gp path] (9.606,0.616)--(9.606,0.796);
\node[gp node center] at (9.606,0.308) {$25$};
\draw[gp path] (0.828,8.381)--(0.828,0.616)--(11.947,0.616);
\node[gp node left] at (0.939,8.303) {$\log_{10}(\mathrm{time},$ sec.)};
\node[gp node right] at (12.614,0.305) {$\log_2n$};
\gpcolor{\gprgb{0}{0}{0}}
\gpsetlinetype{gp lt plot 0}
\gpsetlinewidth{3.00}
\draw[gp path] (0.828,1.804)--(1.998,2.185)--(3.169,2.542)--(4.339,2.825)--(5.510,3.038)%
  --(6.680,3.239)--(7.851,3.395)--(9.021,3.541)--(10.191,3.689)--(11.362,3.793);
\gpsetpointsize{6.00}
\gppoint{gp mark 3}{(0.828,1.804)}
\gppoint{gp mark 3}{(1.998,2.185)}
\gppoint{gp mark 3}{(3.169,2.542)}
\gppoint{gp mark 3}{(4.339,2.825)}
\gppoint{gp mark 3}{(5.510,3.038)}
\gppoint{gp mark 3}{(6.680,3.239)}
\gppoint{gp mark 3}{(7.851,3.395)}
\gppoint{gp mark 3}{(9.021,3.541)}
\gppoint{gp mark 3}{(10.191,3.689)}
\gppoint{gp mark 3}{(11.362,3.793)}
\gpsetlinewidth{6.00}
\draw[gp path] (0.828,2.501)--(1.998,3.321)--(3.169,3.997)--(4.339,4.515)--(5.510,4.954)%
  --(6.680,5.241)--(7.851,5.546)--(9.021,5.814)--(10.191,6.233)--(11.362,6.467);
\gpsetpointsize{8.00}
\gppoint{gp mark 3}{(0.828,2.501)}
\gppoint{gp mark 3}{(1.998,3.321)}
\gppoint{gp mark 3}{(3.169,3.997)}
\gppoint{gp mark 3}{(4.339,4.515)}
\gppoint{gp mark 3}{(5.510,4.954)}
\gppoint{gp mark 3}{(6.680,5.241)}
\gppoint{gp mark 3}{(7.851,5.546)}
\gppoint{gp mark 3}{(9.021,5.814)}
\gppoint{gp mark 3}{(10.191,6.233)}
\gppoint{gp mark 3}{(11.362,6.467)}
\gpsetlinetype{gp lt plot 1}
\gpsetlinewidth{3.00}
\draw[gp path] (0.828,2.670)--(1.998,3.048)--(3.169,3.262)--(4.339,3.409)--(5.510,3.508)%
  --(6.680,3.606)--(7.851,3.693)--(9.021,3.760)--(10.191,3.818)--(11.362,3.932);
\gpsetpointsize{6.00}
\gppoint{gp mark 7}{(0.828,2.670)}
\gppoint{gp mark 7}{(1.998,3.048)}
\gppoint{gp mark 7}{(3.169,3.262)}
\gppoint{gp mark 7}{(4.339,3.409)}
\gppoint{gp mark 7}{(5.510,3.508)}
\gppoint{gp mark 7}{(6.680,3.606)}
\gppoint{gp mark 7}{(7.851,3.693)}
\gppoint{gp mark 7}{(9.021,3.760)}
\gppoint{gp mark 7}{(10.191,3.818)}
\gppoint{gp mark 7}{(11.362,3.932)}
\gpcolor{\gprgb{448}{448}{448}}
\gpsetlinetype{gp lt plot 0}
\gpsetlinewidth{7.00}
\draw[gp path] (3.967,0.616)--(4.339,0.961)--(4.924,1.533)--(5.510,2.113)--(6.095,2.773)%
  --(6.680,3.335)--(7.265,3.905)--(7.851,4.430)--(8.436,4.954)--(9.021,5.469)--(9.606,5.989)%
  --(10.191,6.496)--(10.777,7.080)--(11.362,7.531);
\gpsetlinetype{gp lt plot 1}
\gpsetlinewidth{5.00}
\draw[gp path] (3.730,0.616)--(3.754,0.636)--(4.339,0.989)--(4.924,1.696)--(5.510,2.198)%
  --(6.095,2.496)--(6.680,3.026)--(7.265,3.587)--(7.851,4.069)--(8.436,4.618)--(9.021,5.086)%
  --(9.606,5.616)--(10.191,6.208)--(10.777,6.858)--(11.362,7.202);
\gpcolor{gp lt color border}
\gpsetlinetype{gp lt border}
\gpsetlinewidth{1.00}
\draw[gp path] (0.828,8.381)--(0.828,0.616)--(11.947,0.616);
\gpdefrectangularnode{gp plot 1}{\pgfpoint{0.828cm}{0.616cm}}{\pgfpoint{11.947cm}{8.381cm}}
\end{tikzpicture}
  \resizebox{.48\textwidth}{!}{\begin{tikzpicture}[gnuplot]
\gpcolor{gp lt color border}
\gpsetlinetype{gp lt border}
\gpsetlinewidth{1.00}
\draw[gp path] (0.828,0.616)--(1.008,0.616);
\node[gp node right] at (0.644,0.616) {$-2$};
\draw[gp path] (0.828,2.185)--(1.008,2.185);
\node[gp node right] at (0.644,2.185) {$-1$};
\draw[gp path] (0.828,3.753)--(1.008,3.753);
\node[gp node right] at (0.644,3.753) {$0$};
\draw[gp path] (0.828,5.322)--(1.008,5.322);
\node[gp node right] at (0.644,5.322) {$1$};
\draw[gp path] (0.828,6.891)--(1.008,6.891);
\node[gp node right] at (0.644,6.891) {$2$};
\draw[gp path] (0.828,0.616)--(0.828,0.796);
\node[gp node center] at (0.828,0.308) {$10$};
\draw[gp path] (3.754,0.616)--(3.754,0.796);
\node[gp node center] at (3.754,0.308) {$15$};
\draw[gp path] (6.680,0.616)--(6.680,0.796);
\node[gp node center] at (6.680,0.308) {$20$};
\draw[gp path] (9.606,0.616)--(9.606,0.796);
\node[gp node center] at (9.606,0.308) {$25$};
\draw[gp path] (0.828,8.381)--(0.828,0.616)--(11.947,0.616);
\node[gp node left] at (0.939,8.303) {$\log_{10}(\mathrm{time},$ sec.)};
\node[gp node right] at (12.614,0.305) {$\log_2n$};
\gpcolor{\gprgb{0}{0}{0}}
\gpsetlinetype{gp lt plot 0}
\gpsetlinewidth{3.00}
\draw[gp path] (0.828,1.771)--(1.998,2.167)--(3.169,2.542)--(4.339,2.787)--(5.510,2.960)%
  --(6.680,3.148)--(7.851,3.309)--(9.021,3.456)--(10.191,3.568)--(11.362,3.657);
\gpsetpointsize{6.00}
\gppoint{gp mark 3}{(0.828,1.771)}
\gppoint{gp mark 3}{(1.998,2.167)}
\gppoint{gp mark 3}{(3.169,2.542)}
\gppoint{gp mark 3}{(4.339,2.787)}
\gppoint{gp mark 3}{(5.510,2.960)}
\gppoint{gp mark 3}{(6.680,3.148)}
\gppoint{gp mark 3}{(7.851,3.309)}
\gppoint{gp mark 3}{(9.021,3.456)}
\gppoint{gp mark 3}{(10.191,3.568)}
\gppoint{gp mark 3}{(11.362,3.657)}
\gpsetlinewidth{6.00}
\draw[gp path] (0.828,2.488)--(1.998,3.267)--(3.169,3.983)--(4.339,4.533)--(5.510,4.986)%
  --(6.680,5.196)--(7.851,5.501)--(9.021,5.865)--(10.191,6.154)--(11.362,6.585);
\gpsetpointsize{8.00}
\gppoint{gp mark 3}{(0.828,2.488)}
\gppoint{gp mark 3}{(1.998,3.267)}
\gppoint{gp mark 3}{(3.169,3.983)}
\gppoint{gp mark 3}{(4.339,4.533)}
\gppoint{gp mark 3}{(5.510,4.986)}
\gppoint{gp mark 3}{(6.680,5.196)}
\gppoint{gp mark 3}{(7.851,5.501)}
\gppoint{gp mark 3}{(9.021,5.865)}
\gppoint{gp mark 3}{(10.191,6.154)}
\gppoint{gp mark 3}{(11.362,6.585)}
\gpsetlinetype{gp lt plot 1}
\gpsetlinewidth{3.00}
\draw[gp path] (0.828,2.801)--(1.998,3.115)--(3.169,3.301)--(4.339,3.419)--(5.510,3.470)%
  --(6.680,3.553)--(7.851,3.621)--(9.021,3.625)--(10.191,3.654)--(11.362,3.629);
\gpsetpointsize{6.00}
\gppoint{gp mark 7}{(0.828,2.801)}
\gppoint{gp mark 7}{(1.998,3.115)}
\gppoint{gp mark 7}{(3.169,3.301)}
\gppoint{gp mark 7}{(4.339,3.419)}
\gppoint{gp mark 7}{(5.510,3.470)}
\gppoint{gp mark 7}{(6.680,3.553)}
\gppoint{gp mark 7}{(7.851,3.621)}
\gppoint{gp mark 7}{(9.021,3.625)}
\gppoint{gp mark 7}{(10.191,3.654)}
\gppoint{gp mark 7}{(11.362,3.629)}
\gpcolor{\gprgb{448}{448}{448}}
\gpsetlinetype{gp lt plot 0}
\gpsetlinewidth{7.00}
\draw[gp path] (3.967,0.616)--(4.339,0.961)--(4.924,1.533)--(5.510,2.113)--(6.095,2.773)%
  --(6.680,3.335)--(7.265,3.905)--(7.851,4.430)--(8.436,4.954)--(9.021,5.469)--(9.606,5.989)%
  --(10.191,6.496)--(10.777,7.080)--(11.362,7.531);
\gpsetlinetype{gp lt plot 1}
\gpsetlinewidth{5.00}
\draw[gp path] (3.730,0.616)--(3.754,0.636)--(4.339,0.989)--(4.924,1.696)--(5.510,2.198)%
  --(6.095,2.496)--(6.680,3.026)--(7.265,3.587)--(7.851,4.069)--(8.436,4.618)--(9.021,5.086)%
  --(9.606,5.616)--(10.191,6.208)--(10.777,6.858)--(11.362,7.202);
\gpcolor{gp lt color border}
\gpsetlinetype{gp lt border}
\gpsetlinewidth{1.00}
\draw[gp path] (0.828,8.381)--(0.828,0.616)--(11.947,0.616);
\gpdefrectangularnode{gp plot 1}{\pgfpoint{0.828cm}{0.616cm}}{\pgfpoint{11.947cm}{8.381cm}}
\end{tikzpicture}
 \end{center}
 \begin{center} \hfil
  \resizebox{.48\textwidth}{!}{\begin{tikzpicture}[gnuplot]
\gpcolor{gp lt color border}
\gpsetlinetype{gp lt border}
\gpsetlinewidth{1.00}
\draw[gp path] (0.828,0.616)--(1.008,0.616);
\node[gp node right] at (0.644,0.616) {$-2$};
\draw[gp path] (0.828,2.185)--(1.008,2.185);
\node[gp node right] at (0.644,2.185) {$-1$};
\draw[gp path] (0.828,3.753)--(1.008,3.753);
\node[gp node right] at (0.644,3.753) {$0$};
\draw[gp path] (0.828,5.322)--(1.008,5.322);
\node[gp node right] at (0.644,5.322) {$1$};
\draw[gp path] (0.828,6.891)--(1.008,6.891);
\node[gp node right] at (0.644,6.891) {$2$};
\draw[gp path] (0.828,0.616)--(0.828,0.796);
\node[gp node center] at (0.828,0.308) {$10$};
\draw[gp path] (3.754,0.616)--(3.754,0.796);
\node[gp node center] at (3.754,0.308) {$15$};
\draw[gp path] (6.680,0.616)--(6.680,0.796);
\node[gp node center] at (6.680,0.308) {$20$};
\draw[gp path] (9.606,0.616)--(9.606,0.796);
\node[gp node center] at (9.606,0.308) {$25$};
\draw[gp path] (0.828,8.381)--(0.828,0.616)--(11.947,0.616);
\node[gp node left] at (0.939,8.303) {$\log_{10}(\mathrm{time},$ sec.)};
\node[gp node right] at (12.614,0.305) {$\log_2n$};
\gpcolor{\gprgb{0}{0}{0}}
\gpsetlinetype{gp lt plot 0}
\gpsetlinewidth{3.00}
\draw[gp path] (0.828,1.803)--(1.998,2.205)--(3.169,2.566)--(4.339,2.836)--(5.510,3.038)%
  --(6.680,3.270)--(7.851,3.414)--(9.021,3.604)--(10.191,3.752)--(11.362,3.872);
\gpsetpointsize{6.00}
\gppoint{gp mark 3}{(0.828,1.803)}
\gppoint{gp mark 3}{(1.998,2.205)}
\gppoint{gp mark 3}{(3.169,2.566)}
\gppoint{gp mark 3}{(4.339,2.836)}
\gppoint{gp mark 3}{(5.510,3.038)}
\gppoint{gp mark 3}{(6.680,3.270)}
\gppoint{gp mark 3}{(7.851,3.414)}
\gppoint{gp mark 3}{(9.021,3.604)}
\gppoint{gp mark 3}{(10.191,3.752)}
\gppoint{gp mark 3}{(11.362,3.872)}
\gpsetlinewidth{6.00}
\draw[gp path] (0.828,2.526)--(1.998,3.405)--(3.169,4.135)--(4.339,4.725)--(5.510,5.184)%
  --(6.680,5.527)--(7.851,5.828)--(9.021,6.104)--(10.191,6.378)--(11.362,6.643);
\gpsetpointsize{8.00}
\gppoint{gp mark 3}{(0.828,2.526)}
\gppoint{gp mark 3}{(1.998,3.405)}
\gppoint{gp mark 3}{(3.169,4.135)}
\gppoint{gp mark 3}{(4.339,4.725)}
\gppoint{gp mark 3}{(5.510,5.184)}
\gppoint{gp mark 3}{(6.680,5.527)}
\gppoint{gp mark 3}{(7.851,5.828)}
\gppoint{gp mark 3}{(9.021,6.104)}
\gppoint{gp mark 3}{(10.191,6.378)}
\gppoint{gp mark 3}{(11.362,6.643)}
\gpsetlinetype{gp lt plot 1}
\gpsetlinewidth{3.00}
\draw[gp path] (0.828,2.994)--(1.998,3.458)--(3.169,3.741)--(4.339,3.963)--(5.510,4.142)%
  --(6.680,4.232)--(7.851,4.321)--(9.021,4.430)--(10.191,4.495)--(11.362,4.524);
\gpsetpointsize{6.00}
\gppoint{gp mark 7}{(0.828,2.994)}
\gppoint{gp mark 7}{(1.998,3.458)}
\gppoint{gp mark 7}{(3.169,3.741)}
\gppoint{gp mark 7}{(4.339,3.963)}
\gppoint{gp mark 7}{(5.510,4.142)}
\gppoint{gp mark 7}{(6.680,4.232)}
\gppoint{gp mark 7}{(7.851,4.321)}
\gppoint{gp mark 7}{(9.021,4.430)}
\gppoint{gp mark 7}{(10.191,4.495)}
\gppoint{gp mark 7}{(11.362,4.524)}
\gpcolor{\gprgb{448}{448}{448}}
\gpsetlinetype{gp lt plot 0}
\gpsetlinewidth{7.00}
\draw[gp path] (3.967,0.616)--(4.339,0.961)--(4.924,1.533)--(5.510,2.113)--(6.095,2.773)%
  --(6.680,3.335)--(7.265,3.905)--(7.851,4.430)--(8.436,4.954)--(9.021,5.469)--(9.606,5.989)%
  --(10.191,6.496)--(10.777,7.080)--(11.362,7.531);
\gpsetlinetype{gp lt plot 1}
\gpsetlinewidth{5.00}
\draw[gp path] (3.730,0.616)--(3.754,0.636)--(4.339,0.989)--(4.924,1.696)--(5.510,2.198)%
  --(6.095,2.496)--(6.680,3.026)--(7.265,3.587)--(7.851,4.069)--(8.436,4.618)--(9.021,5.086)%
  --(9.606,5.616)--(10.191,6.208)--(10.777,6.858)--(11.362,7.202);
\gpcolor{gp lt color border}
\gpsetlinetype{gp lt border}
\gpsetlinewidth{1.00}
\draw[gp path] (0.828,8.381)--(0.828,0.616)--(11.947,0.616);
\gpdefrectangularnode{gp plot 1}{\pgfpoint{0.828cm}{0.616cm}}{\pgfpoint{11.947cm}{8.381cm}}
\end{tikzpicture}
  \resizebox{.48\textwidth}{!}{\begin{tikzpicture}[gnuplot]
\gpcolor{gp lt color border}
\gpsetlinetype{gp lt border}
\gpsetlinewidth{1.00}
\draw[gp path] (0.828,0.616)--(1.008,0.616);
\node[gp node right] at (0.644,0.616) {$-2$};
\draw[gp path] (0.828,2.185)--(1.008,2.185);
\node[gp node right] at (0.644,2.185) {$-1$};
\draw[gp path] (0.828,3.753)--(1.008,3.753);
\node[gp node right] at (0.644,3.753) {$0$};
\draw[gp path] (0.828,5.322)--(1.008,5.322);
\node[gp node right] at (0.644,5.322) {$1$};
\draw[gp path] (0.828,6.891)--(1.008,6.891);
\node[gp node right] at (0.644,6.891) {$2$};
\draw[gp path] (0.828,0.616)--(0.828,0.796);
\node[gp node center] at (0.828,0.308) {$10$};
\draw[gp path] (3.754,0.616)--(3.754,0.796);
\node[gp node center] at (3.754,0.308) {$15$};
\draw[gp path] (6.680,0.616)--(6.680,0.796);
\node[gp node center] at (6.680,0.308) {$20$};
\draw[gp path] (9.606,0.616)--(9.606,0.796);
\node[gp node center] at (9.606,0.308) {$25$};
\draw[gp path] (0.828,8.381)--(0.828,0.616)--(11.947,0.616);
\node[gp node left] at (0.939,8.303) {$\log_{10}(\mathrm{time},$ sec.)};
\node[gp node right] at (12.614,0.305) {$\log_2n$};
\gpcolor{\gprgb{0}{0}{0}}
\gpsetlinetype{gp lt plot 0}
\gpsetlinewidth{3.00}
\draw[gp path] (0.828,1.838)--(1.998,2.292)--(3.169,2.778)--(4.339,3.162)--(5.510,3.438)%
  --(6.680,3.700)--(7.851,3.932)--(9.021,4.111)--(10.191,4.246)--(11.362,4.388);
\gpsetpointsize{6.00}
\gppoint{gp mark 3}{(0.828,1.838)}
\gppoint{gp mark 3}{(1.998,2.292)}
\gppoint{gp mark 3}{(3.169,2.778)}
\gppoint{gp mark 3}{(4.339,3.162)}
\gppoint{gp mark 3}{(5.510,3.438)}
\gppoint{gp mark 3}{(6.680,3.700)}
\gppoint{gp mark 3}{(7.851,3.932)}
\gppoint{gp mark 3}{(9.021,4.111)}
\gppoint{gp mark 3}{(10.191,4.246)}
\gppoint{gp mark 3}{(11.362,4.388)}
\gpsetlinewidth{6.00}
\draw[gp path] (0.828,2.622)--(1.998,3.619)--(3.169,4.554)--(4.339,5.638)--(5.510,6.884)%
  --(6.680,7.797)--(7.514,8.381);
\draw[gp path] (8.374,8.381)--(9.021,8.090)--(10.191,7.464)--(11.362,7.328);
\gpsetpointsize{8.00}
\gppoint{gp mark 3}{(0.828,2.622)}
\gppoint{gp mark 3}{(1.998,3.619)}
\gppoint{gp mark 3}{(3.169,4.554)}
\gppoint{gp mark 3}{(4.339,5.638)}
\gppoint{gp mark 3}{(5.510,6.884)}
\gppoint{gp mark 3}{(6.680,7.797)}
\gppoint{gp mark 3}{(9.021,8.090)}
\gppoint{gp mark 3}{(10.191,7.464)}
\gppoint{gp mark 3}{(11.362,7.328)}
\gpsetlinetype{gp lt plot 1}
\gpsetlinewidth{3.00}
\draw[gp path] (0.828,3.738)--(1.998,4.338)--(3.169,4.843)--(4.339,5.148)--(5.510,5.374)%
  --(6.680,5.511)--(7.851,5.496)--(9.021,5.521)--(10.191,5.566)--(11.362,5.527);
\gpsetpointsize{6.00}
\gppoint{gp mark 7}{(0.828,3.738)}
\gppoint{gp mark 7}{(1.998,4.338)}
\gppoint{gp mark 7}{(3.169,4.843)}
\gppoint{gp mark 7}{(4.339,5.148)}
\gppoint{gp mark 7}{(5.510,5.374)}
\gppoint{gp mark 7}{(6.680,5.511)}
\gppoint{gp mark 7}{(7.851,5.496)}
\gppoint{gp mark 7}{(9.021,5.521)}
\gppoint{gp mark 7}{(10.191,5.566)}
\gppoint{gp mark 7}{(11.362,5.527)}
\gpcolor{\gprgb{448}{448}{448}}
\gpsetlinetype{gp lt plot 0}
\gpsetlinewidth{7.00}
\draw[gp path] (3.967,0.616)--(4.339,0.961)--(4.924,1.533)--(5.510,2.113)--(6.095,2.773)%
  --(6.680,3.335)--(7.265,3.905)--(7.851,4.430)--(8.436,4.954)--(9.021,5.469)--(9.606,5.989)%
  --(10.191,6.496)--(10.777,7.080)--(11.362,7.531);
\gpsetlinetype{gp lt plot 1}
\gpsetlinewidth{5.00}
\draw[gp path] (3.730,0.616)--(3.754,0.636)--(4.339,0.989)--(4.924,1.696)--(5.510,2.198)%
  --(6.095,2.496)--(6.680,3.026)--(7.265,3.587)--(7.851,4.069)--(8.436,4.618)--(9.021,5.086)%
  --(9.606,5.616)--(10.191,6.208)--(10.777,6.858)--(11.362,7.202);
\gpcolor{gp lt color border}
\gpsetlinetype{gp lt border}
\gpsetlinewidth{1.00}
\draw[gp path] (0.828,8.381)--(0.828,0.616)--(11.947,0.616);
\gpdefrectangularnode{gp plot 1}{\pgfpoint{0.828cm}{0.616cm}}{\pgfpoint{11.947cm}{8.381cm}}
\end{tikzpicture}
 \end{center}
\caption{Runtimes of divide and conquer algorithm (solid lines) and modified Bini's algorithm (dashed lines) for the inversion of triangular Toeplitz matrix~\eqref{eq3} in full and in the QTT formats (grey and black lines, respectively) w.r.t. problem size $n$ and step size $h=T/n.$ 
Fixed maximum time $T=10,$ fractional order $\alpha=0.2$ (left) and $\alpha=0.8$ (right), mass $m=-10^{-5}$ (top), $m=-10^0$ (middle), $m=-10^5$ (bottom).} 
\label{fig:qtime} 
\end{figure}
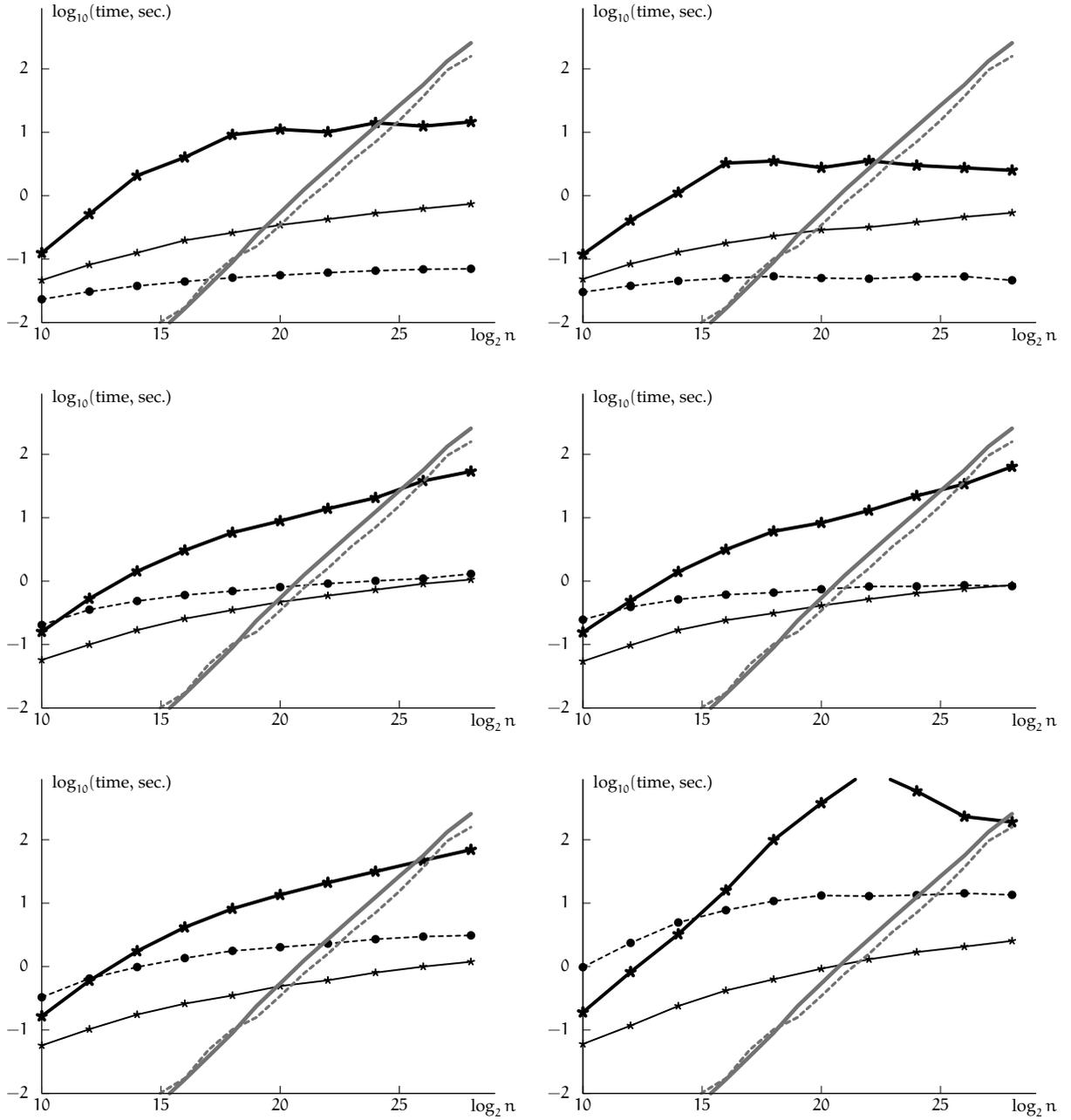
On Fig.~\ref{fig:qtime} we show the runtime of inversion algorithms for triangular Toeplitz matrices in full and in the QTT format w.r.t. problem size and for different parameters $\alpha, m.$
Standard inversion algorithms have the $\O(n\log n)$ complexity which depends only on problem size.
Quite contrarily, the complexity and runtime of QTT algorithms depend on QTT--ranks of input and intermediate vectors, which are sensitive to the fractional order $\alpha,$ mass $m$ and step size $h.$ 
They also depend crucially on the method used to compute the discrete convolution in the QTT format.
We can note that the divide and conquer algorithm~\ref{alg:dc} which uses QTT--conv algorithm~\cite{khkaz-conv-2011} is always significantly faster than the same method which uses QTT--FFT algorithm~\cite{dks-ttfft-2012} to compute the convolution.
However, QTT--FFT works well in modified Bini's algorithm~\ref{alg:bi}, which appears to be the fastest method when mass is small in modulus.  
For large mass the divide and conquer algorithm~\ref{alg:dc} with QTT--conv is preferable to the modified Bini's algorithm~\ref{alg:bi}.
For mass $m\sim -1$ these methods have the same asymptotical complexity. 

From Fig.~\ref{fig:qtime} it can be easily seen that the QTT algorithms are asymptotically faster than the algorithms in full format.
For practical computations it is very important at which size $n$ there is a \emph{crossover point}, i.e., the minimum value of $n$ for which the QTT algorithms are actually faster than the algorithms in full format. 
Numerical experiments show that for a wide range of parameters $\alpha$ and $m$ the crossover point between full and QTT divide and conquer methods is $\log_2 n \simeq {20}.$
This value is about the same as the crossover point between FFT and QTT--FFT algorithm applied to signals with sparse Fourier image~\cite{dks-ttfft-2012}.
The crossover point between full and QTT versions of the modified Bini's algorithm depends on $m$ and $\alpha$ and can be even smaller, e.g. $\log_2 n \simeq 17$ for $m$ small in modulus. 

\subsection{Accuracy test for constant forcing}
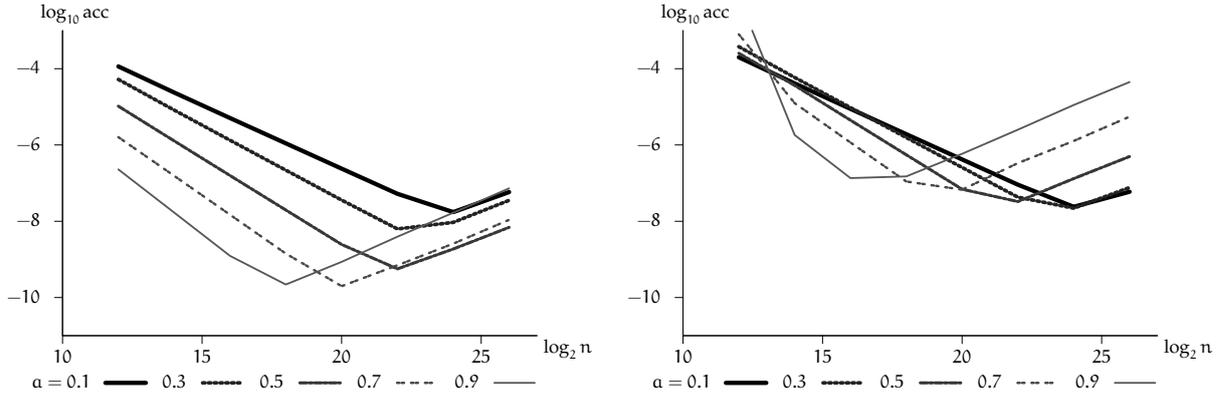
\begin{figure}[t]
 \begin{center} \hfil
  \resizebox{.48\textwidth}{!}{\begin{tikzpicture}[gnuplot]
\gpcolor{gp lt color border}
\gpsetlinetype{gp lt border}
\gpsetlinewidth{1.00}
\draw[gp path] (1.192,2.283)--(1.012,2.283);
\node[gp node right] at (0.828,2.283) {$-10$};
\draw[gp path] (1.192,4.025)--(1.012,4.025);
\node[gp node right] at (0.828,4.025) {$-8$};
\draw[gp path] (1.192,5.768)--(1.012,5.768);
\node[gp node right] at (0.828,5.768) {$-6$};
\draw[gp path] (1.192,7.510)--(1.012,7.510);
\node[gp node right] at (0.828,7.510) {$-4$};
\draw[gp path] (1.192,1.412)--(1.192,1.232);
\node[gp node center] at (1.192,0.924) {$10$};
\draw[gp path] (4.355,1.412)--(4.355,1.232);
\node[gp node center] at (4.355,0.924) {$15$};
\draw[gp path] (7.518,1.412)--(7.518,1.232);
\node[gp node center] at (7.518,0.924) {$20$};
\draw[gp path] (10.682,1.412)--(10.682,1.232);
\node[gp node center] at (10.682,0.924) {$25$};
\draw[gp path] (1.192,8.381)--(1.192,1.412)--(11.947,1.412);
\node[gp node left] at (0.547,8.799) {$\log_{10}\mathrm{acc}$};
\node[gp node left] at (11.947,1.133) {$\log_2n$};
\node[gp node right] at (1.979,0.334) {$a=0.1$};
\gpcolor{\gprgb{0}{0}{0}}
\gpsetlinetype{gp lt plot 0}
\gpsetlinewidth{7.00}
\draw[gp path] (2.163,0.334)--(3.079,0.334);
\draw[gp path] (2.457,7.562)--(3.723,6.965)--(4.988,6.383)--(6.253,5.805)--(7.518,5.227)%
  --(8.784,4.652)--(10.049,4.234)--(11.314,4.692);
\gpcolor{gp lt color border}
\node[gp node right] at (4.183,0.334) {$0.3$};
\gpcolor{\gprgb{136}{136}{136}}
\gpsetlinetype{gp lt plot 2}
\gpsetlinewidth{6.00}
\draw[gp path] (4.367,0.334)--(5.283,0.334);
\draw[gp path] (2.457,7.265)--(3.723,6.564)--(4.988,5.876)--(6.253,5.192)--(7.518,4.509)%
  --(8.784,3.850)--(10.049,4.000)--(11.314,4.503);
\gpcolor{gp lt color border}
\node[gp node right] at (6.387,0.334) {$0.5$};
\gpcolor{\gprgb{215}{215}{215}}
\gpsetlinetype{gp lt plot 4}
\gpsetlinewidth{5.00}
\draw[gp path] (6.571,0.334)--(7.487,0.334);
\draw[gp path] (2.457,6.654)--(3.723,5.860)--(4.988,5.070)--(6.253,4.282)--(7.518,3.495)%
  --(8.784,2.938)--(10.049,3.392)--(11.314,3.886);
\gpcolor{gp lt color border}
\node[gp node right] at (8.591,0.334) {$0.7$};
\gpcolor{\gprgb{282}{282}{282}}
\gpsetlinetype{gp lt plot 6}
\gpsetlinewidth{4.00}
\draw[gp path] (8.775,0.334)--(9.691,0.334);
\draw[gp path] (2.457,5.947)--(3.723,5.062)--(4.988,4.175)--(6.253,3.287)--(7.518,2.543)%
  --(8.784,3.020)--(10.049,3.514)--(11.314,4.058);
\gpcolor{gp lt color border}
\node[gp node right] at (10.795,0.334) {$0.9$};
\gpcolor{\gprgb{342}{342}{342}}
\gpsetlinetype{gp lt plot 0}
\gpsetlinewidth{3.00}
\draw[gp path] (10.979,0.334)--(11.895,0.334);
\draw[gp path] (2.457,5.210)--(3.723,4.223)--(4.988,3.235)--(6.253,2.580)--(7.518,3.098)%
  --(8.784,3.673)--(10.049,4.220)--(11.314,4.777);
\gpcolor{gp lt color border}
\gpsetlinetype{gp lt border}
\gpsetlinewidth{1.00}
\draw[gp path] (1.192,8.381)--(1.192,1.412)--(11.947,1.412);
\gpdefrectangularnode{gp plot 1}{\pgfpoint{1.192cm}{1.412cm}}{\pgfpoint{11.947cm}{8.381cm}}
\end{tikzpicture}
  \resizebox{.48\textwidth}{!}{\begin{tikzpicture}[gnuplot]
\gpcolor{gp lt color border}
\gpsetlinetype{gp lt border}
\gpsetlinewidth{1.00}
\draw[gp path] (1.192,2.283)--(1.012,2.283);
\node[gp node right] at (0.828,2.283) {$-10$};
\draw[gp path] (1.192,4.025)--(1.012,4.025);
\node[gp node right] at (0.828,4.025) {$-8$};
\draw[gp path] (1.192,5.768)--(1.012,5.768);
\node[gp node right] at (0.828,5.768) {$-6$};
\draw[gp path] (1.192,7.510)--(1.012,7.510);
\node[gp node right] at (0.828,7.510) {$-4$};
\draw[gp path] (1.192,1.412)--(1.192,1.232);
\node[gp node center] at (1.192,0.924) {$10$};
\draw[gp path] (4.355,1.412)--(4.355,1.232);
\node[gp node center] at (4.355,0.924) {$15$};
\draw[gp path] (7.518,1.412)--(7.518,1.232);
\node[gp node center] at (7.518,0.924) {$20$};
\draw[gp path] (10.682,1.412)--(10.682,1.232);
\node[gp node center] at (10.682,0.924) {$25$};
\draw[gp path] (1.192,8.381)--(1.192,1.412)--(11.947,1.412);
\node[gp node left] at (0.547,8.799) {$\log_{10}\mathrm{acc}$};
\node[gp node left] at (11.947,1.133) {$\log_2n$};
\node[gp node right] at (1.979,0.334) {$a=0.1$};
\gpcolor{\gprgb{0}{0}{0}}
\gpsetlinetype{gp lt plot 0}
\gpsetlinewidth{7.00}
\draw[gp path] (2.163,0.334)--(3.079,0.334);
\draw[gp path] (2.457,7.771)--(3.723,7.174)--(4.988,6.592)--(6.253,6.014)--(7.518,5.437)%
  --(8.784,4.860)--(10.049,4.355)--(11.314,4.699);
\gpcolor{gp lt color border}
\node[gp node right] at (4.183,0.334) {$0.3$};
\gpcolor{\gprgb{136}{136}{136}}
\gpsetlinetype{gp lt plot 2}
\gpsetlinewidth{6.00}
\draw[gp path] (4.367,0.334)--(5.283,0.334);
\draw[gp path] (2.457,8.013)--(3.723,7.310)--(4.988,6.621)--(6.253,5.937)--(7.518,5.254)%
  --(8.784,4.575)--(10.049,4.324)--(11.314,4.795);
\gpcolor{gp lt color border}
\node[gp node right] at (6.387,0.334) {$0.5$};
\gpcolor{\gprgb{215}{215}{215}}
\gpsetlinetype{gp lt plot 4}
\gpsetlinewidth{5.00}
\draw[gp path] (6.571,0.334)--(7.487,0.334);
\draw[gp path] (2.457,7.862)--(3.723,7.128)--(4.988,6.333)--(6.253,5.543)--(7.518,4.757)%
  --(8.784,4.475)--(10.049,5.000)--(11.314,5.504);
\gpcolor{gp lt color border}
\node[gp node right] at (8.591,0.334) {$0.7$};
\gpcolor{\gprgb{282}{282}{282}}
\gpsetlinetype{gp lt plot 6}
\gpsetlinewidth{4.00}
\draw[gp path] (8.775,0.334)--(9.691,0.334);
\draw[gp path] (2.457,8.295)--(3.723,6.723)--(4.988,5.824)--(6.253,4.934)--(7.518,4.747)%
  --(8.784,5.354)--(10.049,5.860)--(11.314,6.414);
\gpcolor{gp lt color border}
\node[gp node right] at (10.795,0.334) {$0.9$};
\gpcolor{\gprgb{342}{342}{342}}
\gpsetlinetype{gp lt plot 0}
\gpsetlinewidth{3.00}
\draw[gp path] (10.979,0.334)--(11.895,0.334);
\draw[gp path] (2.760,8.381)--(3.723,5.996)--(4.988,5.011)--(6.253,5.050)--(7.518,5.566)%
  --(8.784,6.117)--(10.049,6.681)--(11.314,7.205);
\gpcolor{gp lt color border}
\gpsetlinetype{gp lt border}
\gpsetlinewidth{1.00}
\draw[gp path] (1.192,8.381)--(1.192,1.412)--(11.947,1.412);
\gpdefrectangularnode{gp plot 1}{\pgfpoint{1.192cm}{1.412cm}}{\pgfpoint{11.947cm}{8.381cm}}
\end{tikzpicture}
 \end{center}
\caption{Accuracy of the solution of the test problem~\eqref{eq:fconst} in the relative Frobenius norm w.r.t. problem size $n$ and for different fractional parameters $\alpha.$ 
Fixed maximum time $T=10$ (left) and $T=10^5$ (right). Mass $m=-1.$} 
\label{fig:acc} 
\end{figure}
We consider a simple problem for which the analytical solution is available, namely the one with constant forcing term.
\begin{equation}\label{eq:fconst}
 D^{\alpha}_*y(t)=my(t)+\lambda, \qquad y(0)=y_0.
\end{equation}
The analytical solution is written in the following form
\begin{equation} \label{eq:fsol}
y(t)=y_0E_\alpha\left(m t^\alpha\right) +\frac{\lambda}{m}E_\alpha\left(m t^\alpha\right) -\frac{\lambda}{m},
\end{equation}
where $E_\alpha$ is the Mittag--Lefler function~\cite{ML1, ML2}, which can be expressed and computed by certain (sometimes slow-converging) series.

The accuracy verification results are shown on Fig.~\ref{fig:acc}. 
We see that as the problem size grows, the accuracy improves until certain point and then the error start growing.
This is explained by the machine threshold errors amplified by the condition number of the matrix $A$ from~\eqref{eq3} which is unbounded as $n$ grows to the infinity.

\subsection{Accuracy of the Laplace transform}
\begin{figure}[t]
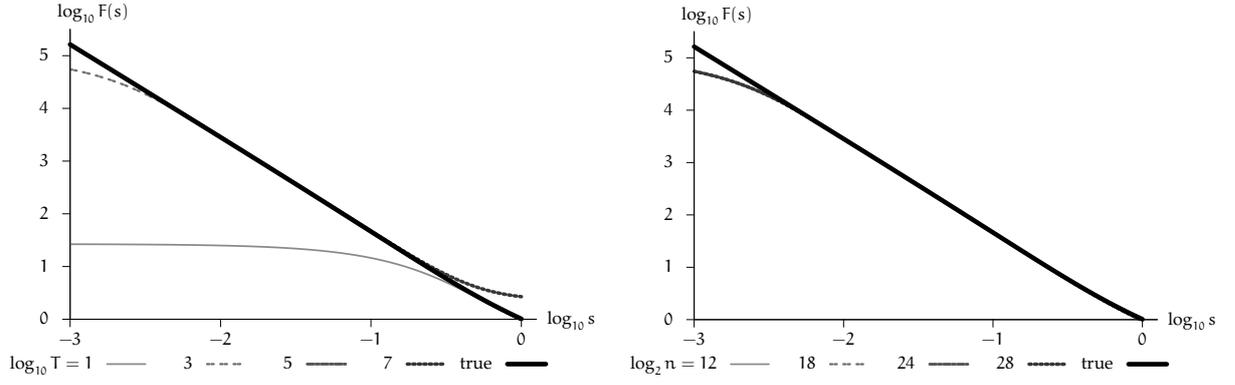

 \begin{center} \hfil
  \resizebox{.48\textwidth}{!}{\input{./Pic/lap/l3aa80n22sm0.tikz}} \hfil
  \resizebox{.48\textwidth}{!}{\input{./Pic/lap/l3aa80t03sm0.tikz}} \hfil
 \end{center}
\caption{The Laplace transform of the solution~\eqref{eq:sol34} and its discrete approximations. Fractional parameter $\alpha=0.8,$ mass $m=-1.$
Left: fixed number of grid points $n=2^{22},$ different maximum time $T.$ 
Right: fixed maximum time $T=10^3,$ different $n.$}  
\label{fig:lapa} 
\end{figure}

Consider the following test equation
\begin{equation}\label{eq:34}
D^{\alpha}_*y(t)=my(t)+t^\frac{3}{4},  \qquad y(0)=1.
\end{equation}
Since the forcing term $f(t)=t^{3/4}$ does not have a short Taylor series representation, this problem could be difficult for methods based on it.
Unlike the previous example, the analytical solution in space domain is not available.
Instead we can solve the problem using the Laplace transform.\footnote{History of the Laplace transform and other essential details can be found in, eg.~\cite{laplace-2003}.}
The  Laplace transform of a function $f(t)$ with the appropriate speed of decay is defined by
\begin{equation}\label{eq:lap}
 F(s) = \L\{f(t)\} = \int_0^{\infty} e^{-st} f(t) dt.
\end{equation}
The Laplace transform of a convolution is the product of Laplace transforms,
\begin{equation}\nonumber
 (f \star g)(t) = \int_0^t f(\tau) g(t-\tau) d\tau \qquad\Leftrightarrow\qquad \L\{f \star g\}(s) = \L\{f\}(s)  \L\{g\}(s).
\end{equation}
This allows to simplify the equation~\eqref{eq:34} and find the Laplace transform of the solution,
\begin{equation} \label{eq:sol34}
 Y(s)=\frac{1}{s^{1-\alpha}(s\alpha-m)}+\frac{\Gamma(1.75)}{s^{1.75} (s^\alpha-m)}.
\end{equation}

The inverse Laplace transform $y(t)=\L^{-1}\{Y(s)\}$ is given by the complex contour integral and is difficult for numerical computation.
However, we can easily compute the Laplace transform of the discrete solution $\L\tilde y = \tilde Y$ in points $\{s_k\}$ using a rectangle quadrature rule,
\begin{equation} \label{eq:lapd}
 \tilde Y(s_k) \approx \tilde Y_k = h \sum_{j=0}^n e^{-t_j s_k} \tilde y(t_j), \qquad t_j=jh.
\end{equation}
Then we compare $Y(s_k)$ and $\tilde Y_k$ to establish the accuracy of the discrete solution $\tilde y(t_j)=y_j.$
It is easy to see that equation~\eqref{eq:lapd} contains three sources of errors, i.e. the ones of the discrete solution, of the quadrature rule and of the truncation of indefinite integral~\eqref{eq:lap} to the finite interval $[0:T],$ $T=nh.$
To compute $\tilde Y(s)$ accurately for small $s$ we should take $T > s^{-1} \log \eps^{-1},$ where $\eps$ is a machine precision error.
To keep the quadrature rule error small, we should also use  grids with small time step $h.$
This is shown on Fig.~\ref{fig:lapa}, where the exact Laplace transform~\eqref{eq:sol34} is compared with its discrete approximations for different $T$ and $n.$
These factors motivate the use of very large grid size $n$ and hence the QTT approach. 
It should be noted that the discrete Laplace transform~\eqref{eq:lapd} is computed perfectly in the QTT format since the QTT--ranks of the exponent are all ones. 

Finally, on Fig.~\ref{fig:lape} we show the accuracy of the Laplace transform of the solution~\eqref{eq:sol34} for $10^{-3} \leq s \leq 10^0$ and for different $T$ and $n.$
It is clear that large problem size is essential for the accurate representation of the solution in the Laplace transform space.

\begin{figure}[t]
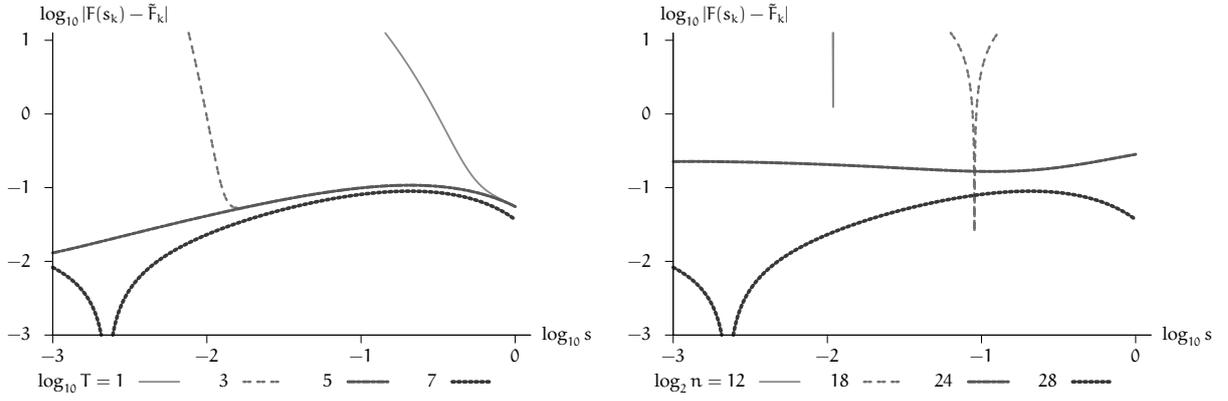

 \begin{center} \hfil
  \resizebox{.48\textwidth}{!}{\input{./Pic/lap/l3ea80n28sm0.tikz}} \hfil
  \resizebox{.48\textwidth}{!}{\input{./Pic/lap/l3ea80t07sm0.tikz}} \hfil
 \end{center}
\caption{Accuracy of the Laplace transform~\eqref{eq:sol34} given by discrete approximation~\eqref{eq:lapd}. Fractional parameter $\alpha=0.8,$ mass $m=-1.$
Left: fixed number of grid points $n=2^{28},$ different maximum time $T.$ 
Right: fixed maximum time $T=10^7,$ different $n.$} 
\label{fig:lape} 
\end{figure}

\section{Conclusions and future work} \label{CONC}
We present the new family of algorithms for the solution of linear fractional ODEs.
Our approach develops the framework of matrix algorithms for fractional calculus~\cite{podlubny-matr-2000} by embedding the QTT tensor decomposition inside matrices, as proposed in~\cite{osel-2d2d-2010}.
The proposed algorithms works on matrix level and can be formally applied for the inversion of any triangular Toeplitz matrix, as well as the one obtained by discretisation of a linear fractional calculus problem. 
The workhorse of the inversion algorithms is the discrete convolution and/or Fourier transform of vectors given/approximated in the compressed QTT form.
The success of the proposed algorithms, however, is based on the representability of the initial matrix and intermediate vectors arising in computations in the QTT format with a modest accuracy.

As the motivating example we consider a simple linear fractional differential equation which reduces to  the weakly singular convolutional Volterra equation with the Abel-type kernel.
The QTT approximation method benefits from both the smoothness and decay of the Abel kernel, which results in efficient QTT--representation of problem matrix with the accuracy up to the machine precision. 
As shown by numerical experiments, the QTT--ranks of the intermediate vectors in the proposed algorithms remain bounded or grow slowly with the problem size.
As the result, our algorithms of the inversion of triangular Toeplitz matrices demonstrate sublinear $o(n)$ complexity, which falls down to the complexity $\O(\log^2 n)$ of the superfast Fourier transform in certain cases. 
For our implementation the crossover point with the standard algorithms based on the FFTW library for the considered experiments is $17 \lesssim \log_2n \lesssim 21,$ i.e., the developed methods give not only the asymptotical benefit, but also a practical speedup for the problems of moderate size.

The proposed approach opens a new class of algorithms for the fractional calculus, i.e., methods of sublinear complexity.
The developed techniques can be applied to the fractional equations with several differential operators of different order.
They also can be generalised to fractional PDEs in two and more dimensions and to the nonlinear fractional problem.
This would be the topic of further work, which will be reported elsewhere.

\section*{References}

\newpage
\def\appendixname{}
\appendix
\section{Fractional differential operators}
We begin by presenting established definitions of the fractional Riemann-Louiville operator, the fractional Riemann-Louiville derivative and a modified form of the fractional derivative --- the Caputo derivative. These definitions can be found in a variety of ealier sources, including \cite{Diet1} and \cite{Pod1}.

\begin{definition}\cite{Diet1}
\label{RLI1}
Let $\alpha\in\R^+$. The operator $J^\alpha_a$, defined on $L_1[a,b]$ by $$J^\alpha_ag(t):=\frac{1}{\Gamma(\alpha)}\int^t_a(t-s)^{\alpha-1}g(s)ds$$
for $a\leq t\leq b$, is called the Riemann-Louville fractional integral operator of order $\alpha$. For $\alpha=0$, we set $J^0_aL=I$, the identity operator.
\end{definition}

\begin{definition}\cite{Diet1}
\label{RLF1}
Let $\alpha\in\R^+$ and $p=\lceil\alpha\rceil$. The operator $D^\alpha_a$, defined by
$$D^\alpha_ag:=D^pJ^{p-\alpha}_ag$$
is called the Riemann-Louiville fractional differential operator of order $\alpha$. For $\alpha=0$, we set $D^0_a:=I$, the identity operator.
\end{definition}

\begin{definition}\cite{Diet1}
\label{CF1}
Assume that $\alpha\geq 0$ and that $g$ is such that $D^\alpha_a\left( g-T_{p-1}[g;a]\right)$ exists, where $p=\lceil\alpha\rceil$ and $T_{p-1}[g;a]$ is the Taylor polynomial of degree $p-1$ for the function $g$ about the point $t=a$; $T_{p-1}[g;a]:=0$ for $p=0$. Then we define the function $D^\alpha_{*a}g$ by
$$D^\alpha_{*a}g:=D^\alpha_a\left( g-T_{p-1}[g;a]\right) .$$
The operator $D^\alpha_{*a}$ is called the Caputo differential operator of order $\alpha$.
\end{definition}

We have chosen a problem defined in terms of the Caputo derivative because it allows us to specify non-homogeneous initial conditions for our test equation and thus it is more advantageous for modelling real-world phenomena \cite{Diet1}. This allows us to draw comparisons with existing work (such as \cite{DietFordFreed1} and \cite{FordSimp1}) which focus on the Caputo form of the derivative. 

We also note that for our range of $\alpha$ we will always have $p=\lceil\alpha\rceil=1$. Also, as is sometimes the case in the literature we will omit our starting value $a=0$ for $t$ from our notation; i.e. we will write $J^\alpha$ for $J^\alpha_a$, $D^\alpha$ for $D^\alpha_a$ and $D^\alpha_*$ for $D^\alpha_{*a}$.

Solutions to fractional problems such as (\ref{eq1}) are formulated as functions of the Mittag-Lefler function (first defined by Mittag-Leffler in \cite{ML1}, \cite{ML2}). A discussion of the relevant properties may be found in \cite{Diet1} and \cite{Pod1}; we re-present the details necessary to this paper below.

\begin{definition}
\label{defn1}
Let $\alpha>0$. The function $E_\alpha$ defined by $$E_\alpha(z):=\sum^{\infty}_{j=0}\frac{z^j}{\Gamma (j\alpha +1)}$$ whenever the series converges is called the Mittag-Leffler function of order $\alpha$.
\end{definition}

\begin{definition}
\label{defn2}
Let $\alpha_1,\alpha_2>0$. The function $E_{\alpha_1,\alpha_2}$ defined by $$E_{\alpha_1,\alpha_2}(z):=\sum^{\infty}_{j=0}\frac{z^j}{\Gamma\left( j\alpha_1+\alpha_2\right)}$$ whenever the series converges is called the two-parameter Mittag-Leffler function with parameters $\alpha_1$ and $\alpha_2$.
\end{definition}
Note that $E_\alpha(z)=E_{\alpha,1}(z)$.

\begin{theorem}(\cite{Diet1})
\label{thmitcon}
Consider the two-parameter Mittag-Leffler function $E_{\alpha_1,\alpha_2}$ for some $\alpha_1,\alpha_2>0$. The power series defining $E_{\alpha_1,\alpha_2}(z)$ is convergent for all $z\in\C$.
\end{theorem}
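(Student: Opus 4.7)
\medskip

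\noindent\textbf{Proof plan.} The plan is to establish that the power series $\sum_{j=0}^{\infty} z^j/\Gamma(j\alpha_1+\alpha_2)$ has infinite radius of convergence by a direct ratio-test (or equivalently, Cauchy--Hadamard) argument, together with the asymptotics of the gamma function.

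First, I would set $a_j = 1/\Gamma(j\alpha_1+\alpha_2)$, which is well defined and strictly positive for all $j\geq 0$ because $j\alpha_1+\alpha_2>0$ under the hypotheses $\alpha_1,\alpha_2>0$. It then suffices to show that the radius of convergence $R$ of $\sum a_j z^j$ satisfies $R=\infty$, i.e.\ $\lim_{j\to\infty} a_{j+1}/a_j = 0$ (since this gives $1/R = \lim_{j\to\infty} a_{j+1}/a_j = 0$, hence $R=\infty$, and absolute convergence at every $z\in\C$ follows).

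Next I would compute
\[
 \frac{a_{j+1}}{a_j} \;=\; \frac{\Gamma(j\alpha_1+\alpha_2)}{\Gamma((j+1)\alpha_1+\alpha_2)}.
\]
The key analytical step is the standard gamma-ratio asymptotic
\[
 \frac{\Gamma(x)}{\Gamma(x+\alpha_1)} \;\sim\; x^{-\alpha_1} \qquad (x\to\infty),
\]
which is an immediate consequence of Stirling's formula $\log\Gamma(x)=x\log x - x + O(\log x)$. Applied with $x=j\alpha_1+\alpha_2\to\infty$ and the fixed shift $\alpha_1>0$, this yields
\[
 \frac{a_{j+1}}{a_j} \;\sim\; (j\alpha_1+\alpha_2)^{-\alpha_1} \;\longrightarrow\; 0 \qquad (j\to\infty),
\]
using crucially that $\alpha_1>0$. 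The ratio test then shows that for every $z\in\C$ the series $\sum_{j\geq 0} a_j z^j$ converges absolutely, which is the claim.

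The only mild obstacle is handling the gamma asymptotic uniformly in the parameter shift $\alpha_2$; this is cosmetic, since $\alpha_2$ is fixed throughout and Stirling's estimate applies to $\Gamma(y+c)/\Gamma(y)\sim y^c$ for any fixed $c\in\R$ as $y\to\infty$. If one prefers to avoid Stirling altogether, an elementary alternative is the root test: $\Gamma(n)^{1/n}\to\infty$ (e.g.\ from $\Gamma(n+1)\geq (n/e)^n$), whence $a_j^{1/j}\to 0$ and Cauchy--Hadamard again delivers $R=\infty$. Either route yields a short, self-contained proof.
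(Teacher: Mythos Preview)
Your argument is correct: the ratio test combined with the gamma-ratio asymptotic $\Gamma(x)/\Gamma(x+\alpha_1)\sim x^{-\alpha_1}$ (or the root-test variant you sketch) is the standard way to establish that $E_{\alpha_1,\alpha_2}$ is entire, and every step you outline goes through without difficulty.

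There is nothing to compare against, however, because the paper does not supply its own proof of this theorem. The statement is quoted verbatim from the reference \cite{Diet1} and used as a black box (specifically, at the very end of the existence--uniqueness argument in the appendix, where convergence of the series $\sum_j (|m|B^\alpha)^j/\Gamma(1+\alpha j)$ is needed). So your write-up in fact \emph{adds} a proof where the paper has none; the approach you give is essentially the one found in standard references such as Diethelm's monograph.
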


\begin{theorem}(\cite{Diet1})
Let $\alpha>0$ and $\lambda\in\R$. Moreover define $$y(t):=E_\alpha\left( \lambda t^\alpha\right) , \,\, x\geq 0.$$ Then $$D^\alpha_*y(t)=\lambda y(t).$$
\end{theorem}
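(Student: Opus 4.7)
The plan is to reduce the claim to a termwise calculation on the power series that defines $E_\alpha$, and then recognize the result as $\lambda E_\alpha(\lambda t^\alpha)$ after a reindexing. Write $p = \lceil \alpha \rceil$. I will illustrate the steps for the case $0 < \alpha \le 1$ that is used in the paper (so $p=1$ and $T_{0}[y;0] = y(0) = E_\alpha(0) = 1$); the general $p \ge 1$ case is identical in spirit once one notes that the Taylor polynomial $T_{p-1}[y;0]$ strips off exactly those initial terms of the series whose exponents $j\alpha$ lie below $p$.

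First I would expand
\begin{equation}\nonumber
y(t) - T_{p-1}[y;0] \;=\; \sum_{j \ge j_0} \frac{\lambda^j\, t^{j\alpha}}{\Gamma(j\alpha + 1)},
\end{equation}
with $j_0 = 1$ in the case $0 < \alpha \le 1$. By Theorem \ref{thmitcon} the Mittag-Leffler series converges for every $z \in \C$, so the above series converges uniformly on every compact subset of $[0,\infty)$ together with any finite number of derivatives. This justifies interchanging $J^{p-\alpha}$ and $D^p$ with the summation in what follows.

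Next I would apply $D^\alpha_* = D^p J^{p-\alpha}$ termwise, using the elementary identity
\begin{equation}\nonumber
J^{\beta} t^{\gamma} \;=\; \frac{\Gamma(\gamma+1)}{\Gamma(\gamma+\beta+1)}\, t^{\gamma+\beta}, \qquad \gamma > -1,\; \beta \ge 0,
\end{equation}
which follows directly from Definition \ref{RLI1} by the Euler beta integral. With $\beta = p - \alpha$ and $\gamma = j\alpha$, each term becomes
\begin{equation}\nonumber
J^{p-\alpha}\!\left[\frac{\lambda^j t^{j\alpha}}{\Gamma(j\alpha+1)}\right] \;=\; \frac{\lambda^j\, t^{j\alpha + p - \alpha}}{\Gamma(j\alpha + p - \alpha + 1)}.
\end{equation}
Differentiating $p$ times gives $t^{j\alpha + p - \alpha} \mapsto \frac{\Gamma(j\alpha + p - \alpha + 1)}{\Gamma(j\alpha - \alpha + 1)} t^{(j-1)\alpha}$, so after cancellation the $j$-th term of $D^\alpha_* y$ is $\lambda^j t^{(j-1)\alpha}/\Gamma((j-1)\alpha + 1)$.

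Finally I would reindex $k = j - 1$ and pull out one factor of $\lambda$, obtaining
\begin{equation}\nonumber
D^\alpha_* y(t) \;=\; \lambda \sum_{k \ge 0} \frac{\lambda^k\, t^{k\alpha}}{\Gamma(k\alpha + 1)} \;=\; \lambda\, E_\alpha(\lambda t^\alpha) \;=\; \lambda y(t).
\end{equation}
The only subtle point—really the only thing to check carefully—is that the termwise application of $J^{p-\alpha}$ and $D^p$ is legitimate and that the Taylor polynomial $T_{p-1}[y;0]$ indeed coincides with the partial sum for $j < p/\alpha$ (in particular equals $1$ when $0 < \alpha \le 1$). Both facts follow from the infinite radius of convergence of the Mittag-Leffler series and the elementary fact that, for non-integer $j\alpha$, the classical derivatives of $t^{j\alpha}$ at $0$ of order $< j\alpha$ vanish; for integer $j\alpha \le p-1$ the corresponding monomials are absorbed into $T_{p-1}[y;0]$ and thus cause no contribution. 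Neither step is deep, so I do not anticipate a genuine obstacle—just some bookkeeping for the general $\alpha$ case.
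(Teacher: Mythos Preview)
The paper does not actually prove this theorem: it is quoted from \cite{Diet1} and stated without proof in the appendix, so there is no ``paper's own proof'' to compare against. Your argument is the standard one and is correct for the range $0<\alpha<1$ used in the paper; the termwise application of $J^{1-\alpha}$ and $D$ is justified exactly as you say, by the entire convergence of the Mittag--Leffler series.

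One small clarification for your general-$\alpha$ remark: since $p=\lceil\alpha\rceil$ implies $\alpha>p-1$ (strictly, unless $\alpha$ is an integer, in which case $\alpha=p$), every exponent $j\alpha$ with $j\ge 1$ already exceeds $p-1$. Hence the Taylor polynomial $T_{p-1}[y;0]$ is always just the constant $1$ coming from the $j=0$ term---there are no further ``stripped'' terms, and your comment about integer $j\alpha\le p-1$ is vacuous for $j\ge1$. This actually simplifies the bookkeeping you were anticipating.
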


It is straightforward to apply the more general theory and results in texts such as \cite{Diet1} (which have their origins in analogous results from classical calculus) in order to prove that a solution exists for (\ref{eq1}) over a finite range of $t$. We present such adapted results.

\begin{theorem}
\label{exist1}
Let $K>0$, $b>0$. Define $G:=\left\{ (t,y):t\in[0,b], \left| y-y_0\right|\leq K\right\}$ and let $z:G\rightarrow\R$, such that $z(t,y(t))=my(t)+f(t)$ under the above conditions, be continuous. Furthermore define $M:=\sup_{(t,y)\in G}|z(t,y(t))|$ and $$B=\left\{\begin{array}{ll}b&M=0\\ \min\left\{ b,\left(\frac{K\Gamma(\alpha+1)}{M}\right)^\frac{1}{\alpha}\right\}&{\rm otherwise}\end{array}\right. .$$ Then there exists a unique function $y\in C[0,B]$ solving (\ref{eq1}).
\end{theorem}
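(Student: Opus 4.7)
The plan is to reduce the fractional differential equation to its equivalent weakly singular Volterra integral equation of the second kind, and then apply a fixed point theorem on a suitable closed convex subset of $C[0,B]$. By Diethelm's Lemma 6.2 (already invoked in the text to pass from~\eqref{eq1} to~\eqref{eq2}), a function $y\in C[0,B]$ solves~\eqref{eq1} if and only if it solves
\begin{equation*}
 y(t) = y_0 + \frac{1}{\Gamma(\alpha)} \int_0^t (t-s)^{\alpha-1} z(s,y(s))\,ds,
\end{equation*}
so it suffices to produce a unique continuous fixed point of the nonlinear operator $(Ty)(t)=y_0+\frac{1}{\Gamma(\alpha)}\int_0^t(t-s)^{\alpha-1}z(s,y(s))\,ds$ on the closed convex set $U=\{y\in C[0,B]:\|y-y_0\|_\infty\leq K\}$.

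First I would verify that $T$ maps $U$ into itself. For $y\in U$ the graph $(t,y(t))$ lies in $G$, so $|z(t,y(t))|\leq M$; then the weakly singular integral can be bounded explicitly,
\begin{equation*}
 |(Ty)(t)-y_0|\leq\frac{M}{\Gamma(\alpha)}\int_0^t(t-s)^{\alpha-1}\,ds=\frac{Mt^\alpha}{\Gamma(\alpha+1)}\leq\frac{MB^\alpha}{\Gamma(\alpha+1)}\leq K,
\end{equation*}
where the last inequality uses precisely the definition of $B$ (and is trivial when $M=0$). A standard dominated convergence argument confirms $Ty\in C[0,B]$.

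Next I would establish existence via Schauder's fixed point theorem (reference~\cite{Sch1}), using Arzel\`a--Ascoli (reference~\cite{Arz1}) to verify compactness of $T(U)$: the family $\{Ty:y\in U\}$ is uniformly bounded by the previous estimate, and uniform equicontinuity follows from the inequality $|(Ty)(t_1)-(Ty)(t_2)|\leq \frac{M}{\Gamma(\alpha+1)}\bigl(|t_1^\alpha-t_2^\alpha|+2(t_2-t_1)^\alpha\bigr)$ (for $0\leq t_1\leq t_2\leq B$), which is the standard weakly singular kernel estimate. Since $U$ is closed, bounded, and convex in $C[0,B]$, Schauder's theorem yields a fixed point.

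For uniqueness, the linearity of $F(t,y)=my+f(t)$ makes the problem considerably easier than the general nonlinear case: if $y_1,y_2$ are both fixed points, then $u=y_1-y_2$ satisfies $u(t)=\frac{m}{\Gamma(\alpha)}\int_0^t(t-s)^{\alpha-1}u(s)\,ds$. Iterating this identity $k$ times yields $|u(t)|\leq\frac{|m|^k t^{k\alpha}}{\Gamma(k\alpha+1)}\|u\|_\infty$, and letting $k\to\infty$ gives $u\equiv 0$ because $t^{k\alpha}/\Gamma(k\alpha+1)\to 0$ (this is exactly the convergence of the Mittag--Leffler series established in Theorem~\ref{thmitcon}). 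The main technical obstacle throughout is handling the weakly singular kernel $(t-s)^{\alpha-1}$ carefully; everything else is a routine adaptation of the classical Picard--Lindel\"of framework, as set out in~\cite[Ch.~6]{Diet1}.
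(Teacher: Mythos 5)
Your proposal is correct and follows essentially the same route as the paper: reduction to the weakly singular Volterra equation (the paper proves this equivalence as its own Lemma~\ref{exist2} rather than citing Diethelm directly), invariance of the ball $U$ under the integral operator, Schauder via Arzel\`a--Ascoli for existence, and the iterated-kernel estimate controlled by the convergent Mittag--Leffler series for uniqueness. The minor differences (your slightly sharper equicontinuity bound, and phrasing uniqueness in terms of $u=y_1-y_2$ rather than $\left\|R^jy-R^j\tilde{y}\right\|_\infty$) are cosmetic.
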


In order to prove Theorem \ref{exist1} we first need to prove the following lemma:
\begin{lemma}
\label{exist2}
Assuming the conditions of Theorem \ref{exist1}, the function $y\in C[0,B]$ is a solution of the initial value problem (\ref{eq1}) if and only if it is a solution of the Volterra integral equation
\begin{equation}
\label{volt1}
y(t)=y_0+\frac{1}{\Gamma(\alpha)}\int^t_0(t-s)^{\alpha-1}z(s)ds.
\end{equation}
\end{lemma}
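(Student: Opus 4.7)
The plan is to establish the equivalence via the standard Riemann--Liouville fractional calculus identities that connect $J^\alpha$ and $D^\alpha_*$, working in $C[0,B]$ where $0<\alpha<1$ so that $p=\lceil\alpha\rceil=1$ and the Taylor polynomial reduces to $T_0[y;0]=y_0$.

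\textbf{Forward direction (IVP $\Rightarrow$ \eqref{volt1}).} Assume $y\in C[0,B]$ satisfies $D^\alpha_* y(t)=z(t,y(t))$ with $y(0)=y_0$. Since $z$ is continuous on $G$ and $y\in C[0,B]$, the composition $s\mapsto z(s,y(s))$ is continuous and bounded by $M$, so $J^\alpha z(\cdot,y(\cdot))$ is well defined and continuous on $[0,B]$. Apply $J^\alpha$ to both sides of the differential equation. By the Caputo definition, $D^\alpha_* y = D^\alpha(y-y_0) = D\,J^{1-\alpha}(y-y_0)$, and using the semigroup identity $J^\alpha J^{1-\alpha}=J^1$ together with the fundamental theorem $J^1 D\,g = g - g(0)$ applied to $g = J^{1-\alpha}(y-y_0)$ (which vanishes at $t=0$ because $y-y_0\in C[0,B]$ with $(y-y_0)(0)=0$), one obtains $J^\alpha D^\alpha_* y = y - y_0$. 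This yields \eqref{volt1} directly.

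\textbf{Reverse direction (\eqref{volt1} $\Rightarrow$ IVP).} Suppose $y\in C[0,B]$ satisfies \eqref{volt1}. Setting $t=0$ the integral term vanishes (since the integrand is bounded by $M(t-s)^{\alpha-1}$ and the interval has length zero), so $y(0)=y_0$. Subtract $y_0$ and apply $D^\alpha_*$ to both sides: using the identity $D^\alpha_* J^\alpha g = g$ valid for continuous $g$ (which follows from $D J^{1-\alpha} J^\alpha = D J^1 = I$), and noting $D^\alpha_* y_0 = 0$ since the Taylor polynomial of a constant of degree $0$ equals the constant, one recovers $D^\alpha_* y(t) = z(t,y(t))$.

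\textbf{Anticipated obstacle.} The routine calculations of both directions are straightforward, but the genuine subtlety lies in verifying the interchange identities $J^\alpha D^\alpha_* = I - T_0[\cdot;0]$ and $D^\alpha_* J^\alpha = I$ in the correct function spaces. The first requires that $y-y_0$ be regular enough for $J^{1-\alpha}(y-y_0)$ to be absolutely continuous so that $DJ^{1-\alpha}$ is well defined almost everywhere and the fundamental theorem applies; the second requires that $J^\alpha g$ inherit enough regularity from $g\in C[0,B]$ for $D^\alpha_*(J^\alpha g)$ to make sense. These are standard consequences of Lemmas 2.12 and 2.22 in~\cite{Diet1} (or the analogous statements in~\cite{Pod1}), and invoking them keeps the argument short; the work is essentially to justify that $y\in C[0,B]$ combined with continuity of $z$ places us in the hypotheses of those lemmas, which follows because $J^\alpha z(\cdot,y(\cdot))$ is H\"older continuous of order $\alpha$, hence lies in the required smoothness class.
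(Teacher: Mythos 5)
Your proposal is correct and follows essentially the same route as the paper: one direction applies $D^\alpha_*$ to $y=y_0+J^\alpha z$ using $D^\alpha_*J^\alpha=I$ and $D^\alpha_*y_0=0$, and the other recovers the Volterra equation by applying the fractional integral (the paper factors this as $J$ followed by $D^{1-\alpha}$, which is the same as your direct application of $J^\alpha$ via the semigroup identity). Your added remarks on the regularity needed to justify the interchange identities are a refinement of, not a departure from, the paper's argument.
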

{\bf Proof of Lemma \ref{exist2}:} Assume $y(t)$ is a solution of (\ref{volt1}). Writing (\ref{volt1}) in operator form we have
\begin{equation}
\label{volt2}
y(t)=y_0+J^\alpha z(t)
\end{equation}
and subsequently applying $D^\alpha_*$ to both sides yields
\begin{eqnarray*}
D^\alpha_*y(t)&=&D^\alpha_*y_0+D^\alpha_*J^\alpha z(t)\\
&=&0+z(t,y(t)).
\end{eqnarray*}
Thus, recalling that $z(t,y(t))=my(t)+f(t)$, $y(t)$ must also solve (\ref{eq1}). Proving the condition in the other direction, we now assume that $y(t)$ is a solution to (\ref{volt1}). Recalling that $z(t)\in C[0,B]$ we may write (\ref{eq1}) as
\begin{eqnarray*}
z(t,y(t))&=&D^\alpha_*y(t)\\
&=&D^\alpha\left( y-y_0\right) (t)\\
&=&DJ^{1-\alpha}y(t)-DJ^{1-\alpha}y_0
\end{eqnarray*}
Applying $J$ to both sides yields
\begin{eqnarray*}
Jz(t,y(t))&=&JDJ^{1-\alpha}y(t)-JDJ^{1-\alpha}y_0\\
&=&J^{1-\alpha}y(t)-J^{1-\alpha}y_0
\end{eqnarray*}
Now applying $D^{1-\alpha}$ to both sides we have
$$D^{1-\alpha}Jz(t,y(t))=y(t)-y_0,$$
which may be rearranged to give the precise Volterra equation we require:
$$y_0+J^{\alpha}z(t,y(t))=y(t).\square$$

{\bf Proof of Theorem \ref{exist1}:}
Suppose that $M=0$; then $z(t,y(t))=0\forall (t,y)\in G$. For this case, $y:[0,B]\rightarrow\R$ such that $y(t)=y_0$ is a solution of the initial value problem (\ref{eq1}).

Otherwise, supposing $M\neq 0$, we must use our Lemma \ref{exist2}, which asserts that (\ref{eq1} is equivalent to the Volterra equation (\ref{volt1}). We introduce the set $U:=\left\{ y\in C[0,B]:\left\| y-y_0\right\|_\infty\leq K\right\}$. $U$ is a closed, convex subset of the Banach space of all continuous functions on $[0,B]$, equipped with the Chebyshev norm. Hence, $U$ is a Banach space also. Since $y=y_0\in U$ also, we conclude that $U$ is non-empty. We define an operator $R$ on $U$ by
\begin{equation}
\label{thp1}
(Ry)(t):=y_0+\frac{1}{\Gamma(\alpha)}\int^t_0(t-s)^{\alpha-1}z(s,y(s))ds.
\end{equation}
We can now rewrite our Volterra equation (\ref{volt1}) as
\begin{equation}
\label{thp2}
y=Ry
\end{equation}
and our task of proving the existence of a solution to (\ref{volt1}) (which is equivalent to proving the existence of a solution to our original problem (\ref{eq1})) now becomes one of proving that the operator $R$ has a fixed point. Consider that, for $0\leq t_1\leq t_2\leq B$, we have
\begin{eqnarray*}
\left| (Ry)\left( t_1\right)-(Ry)\left( t_2\right)\right|&=&\frac{1}{\Gamma(\alpha)}\left|\int^{t_1}_0(t_1-s)^{\alpha-1}z(s,y(s))ds-\int^{t_2}_0(t_2-s)^{\alpha-1}z(s,y(s))ds\right|\\
&\leq&\frac{M}{\Gamma(\alpha)}\left(\int^{t_1}_0\left| (t_1-s)^{\alpha-1}-(t_2-s)^{\alpha-1}\right| ds+\int^{t_2}_{t_1}(t_2-s)^{\alpha-1}ds\right)\\
&&\,\,\,\,({\rm since}\,\,\alpha<1\Rightarrow\alpha-1<0\Rightarrow\left( t_1-s\right)^{\alpha-1}\geq\left( t_2-s\right)^{\alpha-1})\\
&\leq&\frac{2M}{\Gamma(\alpha+1)}\left( t_2-t_1\right)^\alpha\\
&\rightarrow&0 \,\,{\rm as} \,\, t_2\rightarrow t_1.
\end{eqnarray*}
In addition, for $y\in U$, $t\in[0,B]$ we have
\begin{eqnarray*}
\left| (Ry)(t)-y_0\right| &=& \frac{1}{\Gamma(\alpha)}\left|\int^t_0(t-s)^{\alpha-1}z(s,y(s))ds\right|\\
&\leq &\frac{MK\Gamma(\alpha+1)}{\Gamma(\alpha+1)M}\\
&=& K.
\end{eqnarray*}
Thus, $y\in U\Rightarrow Ry\in U$. In order to show that we have a fixed point we must now show that $R(U):=\left\{ R(u):u\in Y\right\}$ is a relatively compact set.
For $w\in R(U)$ we have, for all $\t\in[0,B]$,
\begin{eqnarray*}
|w(t)|&=&\left| (Ry)(t)\right|\\
&\leq &y_0+\frac{1}{\Gamma(\alpha)}\int^t_0(t-s)^{\alpha-1}|z(s,y(s))|ds\\
&\leq &y_0+K.
\end{eqnarray*}
If $|t_2-t_1|<\delta$ then $$|(Ry)(t_1)-(Ry)(t_2)|\leq\frac{2M\delta^\alpha}{\Gamma(\alpha+1)};$$
thus the set $R(U)$ is equicontinuous (due to the right hand side's indpendence of $y$, $t_2$, $t_1$). We can therefore apply the Arzel\`{a}-Ascoli Theorem \cite{Arz1} to conclude that $A(U)$ is a relatively compact set and consequently apply Schauder's Fixed Point Theorem \cite{Sch1} to conclude that $R$ has a fixed point. By construction, our fixed point solves the original problem (\ref{eq1}). All that remains is to prove that our solution (whose existence we have just proved) is unique.

Suppose we have a second solution (i.e. a second fixed point), $\tilde{y}$. If we consider $\left\| y-\tilde{y}\right\|_\infty$ then repeated use of $Ry=y$ and $R\tilde{y}=y$ yields $$\left\| R^jy-R^j\tilde{y}\right|_\infty\leq\frac{\left(|m|B^\alpha\right)^j}{\Gamma(1+\alpha j)}\left\| y-\tilde{y}\right\|_\infty .$$

To confirm uniqueness then, we just need to confirm convergence of the above; and what we have on the right hand side of the inequality is the power series definition of the Mittag-Leffler function $E_\alpha\left( |m|B^\alpha\right)$, which we know converges, by theorem \ref{thmitcon}. Our proof is therefore concluded.$\square$

\end{document}